\theoremstyle{definition}
\newtheorem{defin}{Definition}[section]
\newtheorem{ex}[defin]{Example}
\newtheorem*{Ack}{Acknowledgements}
\theoremstyle{plain} 
\newtheorem{thm}[defin]{Theorem}
\newtheorem{prop}[defin]{Proposition}
\newtheorem{lem}[defin]{Lemma}
\newtheorem{conj}[defin]{Conjecture}
\theoremstyle{remark}
\newtheorem*{rem}{Remark}
\newtheorem*{mind}{Reminder}
\newtheorem*{caut}{Caution}
\newcommand{\lieg}{\mathfrak{g}}
\newcommand{\liesl}{\mathfrak{sl}}
\newcommand{\liesp}{\mathfrak{sp}}
\newcommand{\lieso}{\mathfrak{so}}
\newcommand{\kxC}{\mathbbm{C}}
\newcommand{\prP}{\mathbbm{P}}
\newcommand{\intZ}{\mathbbm{Z}}
\newcommand{\II}{\mathcal{I}}
\newcommand{\midd}{\hspace{-0.35ex} \mid \hspace{-0.35ex}}
\newcommand{\mi}{\hspace{-0.6ex}-\hspace{-0.3ex}}
\newcommand{\pl}{\hspace{-0.4ex}+\hspace{-0.3ex}}
\newcommand{\plu}{\hspace{-0.6ex}+\hspace{-0.3ex}}
\newcommand{\Ad}{\operatorname{Ad}}
\newcommand{\Bl}{\operatorname{Bl}}
\newcommand{\codim}{\operatorname{codim}}
\newcommand{\disc}{\operatorname{disc}}
\newcommand{\elm}{\operatorname{elm}}
\newcommand{\Grass}{\operatorname{Grass}}
\newcommand{\Hom}{\operatorname{Hom}}
\newcommand{\Hilb}{\operatorname{Hilb}}
\newcommand{\im}{\operatorname{im}}
\newcommand{\Mat}{\operatorname{Mat}}
\newcommand{\Pf}{\operatorname{Pf}}
\newcommand{\rk}{\operatorname{rk}}
\newcommand{\sgn}{\operatorname{sgn}}
\newcommand{\Spec}{\operatorname{Spec}}
\newcommand{\Sym}{\operatorname{Sym}}
\newcommand{\tr}{\operatorname{tr}}
\newcommand{\red}{\mathbin{\textit{\hspace{-0.8ex}/\hspace{-0.5ex}/\hspace{-0.4ex}}}}
\newcommand{\sred}{\mathbin{\textit{\hspace{-0.8ex}/\hspace{-0.5ex}/\hspace{-0.5ex}/\hspace{-0.4ex}}}}
\newcommand{\vertsub}{\mathrel{\protect\raisebox{0.3ex}{\protect\rotatebox[origin=c]{90}{$\subset$}}}}
\newcommand{\vertin}{\mathrel{\protect\raisebox{0.3ex}{\protect\rotatebox[origin=c]{90}{$\in$}}}}
\newcommand{\acts}{\mathrel{\protect\raisebox{0.3ex}{\protect\rotatebox[origin=c]{270}{$\circlearrowright$}}}}
\author{Tanja Becker}
\title{On the existence of symplectic resolutions of symplectic reductions}
\address{Fachbereich Physik, Mathematik und Informatik\\ Johannes Gutenberg -- Universit\"at Mainz \\ Staudinger Weg 9 \\ D -- 55099 Mainz}
\email{tanja@mathematik.uni-mainz.de}
\begin{document}

\begin{abstract}
We compute the symplectic reductions for the action of $Sp_{2n}$ on several copies of $\kxC^{2n}$ and for all coregular representations of $Sl_2$. If it exists we give at least one symplectic resolution for each example. In the case $Sl_2$ acting on $\liesl_2 \oplus \kxC^2$ we obtain an explicit description of Fu's and Namikawa's example of two non-equivalent symplectic resolutions connected by a Mukai flop \cite{funa:2004}.
\end{abstract}

\maketitle

\section{introduction}

Let $G$ be a semisimple complex linear algebraic group, $\lieg$ its Lie algebra and $V$ a finite dimensional representation of $G$. The double $V \oplus V^*$ is equipped with a symplectic form and $G$ acts symplectically on this double. Let $\mu \colon V \oplus V^* \to \lieg^*$ be the associated moment map. Define the symplectic reduction by
$$V\oplus V^* \sred G := \mu^{-1}(0)\red G.$$

\begin{conj}[Kaledin, Lehn, Sorger] 
\label{KLSVerm} 
If there exists a symplectic resolution of $V\oplus V^* \sred G$, then the quotient $V\red G$ of the simple action is smooth.
\end{conj}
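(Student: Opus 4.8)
The conjecture is asserted for an arbitrary semisimple $G$ and an arbitrary $V$, so a direct general proof seems out of reach; the realistic plan is to make the link between the two quotients as conceptual as possible, and then to accumulate evidence by verifying it in the two explicit families announced in the abstract. The structural input I would try to exploit first is a torus action: the rescaling $\lambda(t)\cdot(v,\xi)=(v,t\xi)$ on $V\oplus V^*$ commutes with $G$, multiplies the symplectic form by $t$, and preserves $\mu^{-1}(0)$ because $\mu(v,0)=0$; hence it descends to $X:=V\oplus V^*\sred G$, and its fixed locus on $\mu^{-1}(0)$ is exactly $V\times\{0\}$, so that $X^{\lambda}$ is the image of the natural morphism $V\red G\to X$. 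If a symplectic resolution $\pi\colon Y\to X$ exists, then, since $\lambda$ rescales the form, it lifts to $Y$ (a symplectic resolution is a $\mathbbm{Q}$-factorial terminalization, and a connected group action extends to each of the finitely many minimal models); the fixed locus $Y^{\lambda}$ is then smooth, and a fixed-point argument on the proper fibres of $\pi$ shows $\pi(Y^{\lambda})=X^{\lambda}$. This forces $V\red G$ to be dominated, properly and birationally, by a smooth variety — but properness and birationality of $Y^{\lambda}\to X^{\lambda}$ do \emph{not} by themselves yield smoothness of $V\red G$, and this is where the conceptual argument stalls; one would need finer information, e.g.\ that $Y^{\lambda}\to X^{\lambda}$ is an isomorphism, or control of discrepancies, or use of the second $\kxC^{*}$ that contracts $X$ to a point.

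The concrete plan is therefore a case analysis. For $Sp_{2n}$ acting on $k$ copies of $\kxC^{2n}$, the first fundamental theorem identifies $V\red Sp_{2n}$ with the variety of antisymmetric $k\times k$ matrices of rank $\le 2n$; this is the whole affine space of antisymmetric matrices, hence smooth, for $k\le 2n+1$, and a singular determinantal variety for $k\ge 2n+2$. I would describe $\mu^{-1}(0)$ and $X$ in the same explicit way by invariant theory, and, for the coregular $Sl_2$-representations, run through the (finite) classification, computing in each case the moment-map zero fibre, the reduction $X$, and the simple quotient $V\red G$. In the range where $V\red G$ is smooth the implication is vacuous, but to exhibit the geometry I would still construct at least one symplectic resolution of $X$; for $Sl_2$ acting on $\liesl_2\oplus\kxC^2$ — where $V\red Sl_2\cong\kxC^2$ — this should reproduce, explicitly, the two resolutions of Fu and Namikawa joined by a Mukai flop.

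The heart of the verification is the opposite range, where $V\red G$ is singular and the conjecture demands that $X$ admit \emph{no} symplectic resolution; proving non-existence is the main obstacle. Here the plan is to compute a $\mathbbm{Q}$-factorial terminalization of $X$ (every symplectic resolution is one) and show that the terminalization one obtains is itself singular: either by proving $X$ is already terminal, so no crepant partial resolution exists, or by computing the transverse slice at a general point of the singular locus and recognising it as a terminal symplectic singularity that is not smooth, or, failing that, by a numerical obstruction (the stringy $E$-function / intersection-cohomology Poincar\'e polynomial, which for a symplectic resolution would have to be palindromic of the expected degree). I expect the determinantal quotients for $Sp_{2n}$ to yield to a uniform slice computation reducing to a smaller symplectic group, and the finitely many remaining $Sl_2$-cases to be checked one at a time; a proof of the conjecture in general would require turning one of these non-existence mechanisms into a statement about arbitrary $\mu^{-1}(0)\red G$, which is exactly the step I do not see how to make uniform.
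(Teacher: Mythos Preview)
The statement you are attempting is a \emph{conjecture}; the paper does not prove it and does not claim to. There is therefore no ``paper's own proof'' to compare your proposal against. What the paper actually does is investigate the \emph{converse} implication: starting from Schwarz' list of coregular $Sl_2$-representations (and the analogous $Sp_{2n}$-series), it asks whether in each such case the symplectic reduction $V\oplus V^*\sred G$ admits a symplectic resolution, and answers affirmatively by explicit construction. In the range you call ``vacuous'' for the conjecture, the paper's content is precisely the construction of these resolutions; your sketch of that part (invariant-theoretic description of $\mu^{-1}(0)$ and of $X$, identification with nilpotent orbit closures, the $\liesl_2\oplus\kxC^2$ example with its Mukai flop) is in line with what the paper carries out.

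The substantive part of your plan --- the ``opposite range'' where $V\red G$ is singular and you propose to prove non-existence of a symplectic resolution via terminalizations, slice computations, or stringy invariants --- is not treated in the paper at all, and is not claimed there. Your conceptual torus-action argument is an interesting heuristic, but as you yourself note it stalls at the crucial step (dominating $V\red G$ by a smooth variety does not force it to be smooth), so it does not constitute a proof either. In short: your proposal is not wrong as a research programme toward the conjecture, but it is not a proof, and it should not be read as an alternative to something the paper proves --- the paper leaves Conjecture~\ref{KLSVerm} open.
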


The analogue for finite groups is known and has been proved by Kaledin \cite{kal:2003} and Verbitsky \cite{ver:2000}.

Representations with smooth quotient $V\red G$ are called coregular. They have been classified by Schwarz in \cite{schw:1978} in case $G$ is connected, simple and simply connected.
This complete classification suggests to examine the converse of conjecture \ref{KLSVerm}: Let $G$, $V$ be in Schwarz' list. Does there exist a symplectic resolution of $V\oplus V^*\sred G$? Note that for $G$ finite, the converse of the conjecture is not true by Ginzburg--Kaledin \cite{gika:2004}. However, for $G = Sl_2 := Sl_2(\kxC)$ we obtain the following result:

\begin{thm}
Let $V$ be a coregular representation of $Sl_2$. Then every irreducible component of $V \oplus V^*\sred Sl_2$ with reduced structure is a symplectic variety in the sense of Beauville--Namikawa \cite{bea:2000} and admits a symplectic resolution.
\end{thm}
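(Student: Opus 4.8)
The plan is to prove the theorem by running through Schwarz's (finite) list of coregular representations of $Sl_2$ family by family. Since every $Sl_2$-module is self-dual we have $V\oplus V^*\cong V\oplus V$ as an $Sl_2$-module (only the symplectic pairing between the two summands is relevant), and a trivial summand contributes merely a free polynomial factor to everything in sight; so it suffices to treat the finitely many ``essential'' cases on Schwarz' list, namely a few copies of the standard module $V_1$, a few copies of the adjoint module $V_2$, the low binary forms $V_3,V_4$, and the mixed cases such as $V_1\oplus V_2$, $V_1\oplus V_3$ that occur there. For a fixed such $V$ one writes the moment map $\mu\colon V\oplus V^*\to\liesl_2^*$ explicitly: it is the homogeneous quadratic map $\mu(v,\xi)(A)=\xi(A\cdot v)$, so in particular $V\oplus V^*\sred Sl_2$ carries a $\kxC^*$-action commuting with $Sl_2$ and is a cone (a conical symplectic singularity, once we know it is symplectic at all).

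For each $V$ one then determines the ideal of $\mu^{-1}(0)$ and decomposes it into its irreducible components $Z_i$ with reduced structure, each cut out by explicit classical covariant equations; one checks component by component that $Z_i$ is reduced, normal, and has rational singularities (typically because it is a normal complete intersection, or because it is visibly of the form ``null cone of a smaller representation'' or a product). Next one computes the invariant ring $\kxC[Z_i]^{Sl_2}$ using classical $Sl_2$ invariant theory --- the first and second fundamental theorems, transvectants, and the Clebsch--Gordan decomposition (equivalently, exploiting the dual pair acting on the two-dimensional multiplicity space of $V\oplus V^*$) --- obtaining generators and relations and hence an explicit presentation of the corresponding component $X_i$ of $V\oplus V^*\sred Sl_2$.

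The heart of the argument is then to recognise each $X_i$. In every case it turns out to be one of the following: a point or an affine space; a Kleinian surface singularity $\kxC^2/\Gamma$ of type $ADE$; the closure $\overline{\mathcal O_{\mathrm{min}}}(\lieg)$ of a minimal nilpotent orbit (for $V=V_1\oplus V_2$ this is $\overline{\mathcal O_{\mathrm{min}}}(\liesl_3)$, the four-fold of Fu and Namikawa carrying two $T^*\prP^2$-resolutions exchanged by a Mukai flop); a determinantal variety of antisymmetric matrices; or a product of such pieces. Normality of $X_i$ is inherited from that of $Z_i$, and by Boutot's theorem $X_i$ has rational singularities; the Poisson bracket on $\kxC[V\oplus V^*]$ descends to $\kxC[\mu^{-1}(0)]^{Sl_2}$ and restricts to a symplectic form on the smooth locus of each $X_i$, so by Namikawa's criterion $X_i$ is a symplectic variety. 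Finally, for each of the model varieties listed a symplectic resolution is classically available --- the minimal resolution of $\kxC^2/\Gamma$; the Springer-type map $T^*(G/P)\to\overline{\mathcal O_{\mathrm{min}}}(\lieg)$; the resolution of a determinantal variety by the total space of a vector bundle over a Grassmannian; and products of these --- and one verifies that under the identification of $X_i$ with its model this resolution pulls the reduction form back to a form that extends holomorphically, so $X_i$ admits a symplectic resolution.

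The main obstacle is the invariant-theoretic bookkeeping: producing the full list of relations for each $\kxC[Z_i]^{Sl_2}$, and above all proving that $\mu^{-1}(0)$ is, component by component, reduced and normal with rational singularities, rather than merely set-theoretically what one na\"ively expects. A secondary subtlety is structural: $Sl_2$ has no nontrivial characters, so one cannot manufacture the symplectic resolutions by varying a GIT linearisation of the reduction itself; the resolutions must instead come from the concrete identification of each $X_i$ with a model variety whose resolution is understood, and one has to check that under that identification the two symplectic structures agree up to scale. The case $V=V_1\oplus V_2$ demands the most care and is precisely where the Fu--Namikawa flop makes its appearance.
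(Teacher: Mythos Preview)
Your overall strategy --- run through Schwarz's list case by case, compute the moment fibre and its invariants explicitly, and then recognise each component as a known symplectic singularity with a known resolution --- is exactly the paper's strategy. There is no alternative global argument here; the theorem is proved by exhaustion.

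However, your sketch contains a substantive misidentification in the most interesting case. For $V=V_1\oplus V_2=\kxC^2\oplus\liesl_2$ you assert that the reduction is $\overline{\mathcal O_{\min}}(\liesl_3)$, the closure of the minimal nilpotent orbit in $\liesl_3$, with its two Springer resolutions $T^*\prP^2$ and $T^*(\prP^2)^*$ related by a single Mukai flop. This is not correct. The minimal orbit closure in $\liesl_3$ has an \emph{isolated} singularity at the origin, whereas the actual reduction $Z$ here is a $4$--fold whose singular locus is a $2$--dimensional $A_1$--cone $\{z_2^2=4z_1z_3\}\subset\kxC^3$, itself singular at $0$; so $Z$ has three strata, not two, and cannot be $\overline{\mathcal O_{\min}}(\liesl_3)$. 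The connection with $\liesl_3$ is only local and only appears \emph{after} a first blow-up: one blows up $Z$ along a certain $3$--plane to get a partial resolution $Y$ with a single remaining isolated singularity, and it is a chart of $Y$ near that point that is identified with $\{A\in\liesl_3:\rk A\le 1\}$. Consequently there are \emph{three} symplectic resolutions of $Z$, not two: the straight blow-up $\widetilde Z=\Bl_{Z_{\mathrm{sing}}}Z$, plus two further resolutions $\widetilde Y_1,\widetilde Y_2$ obtained from $\widetilde Z$ by Mukai-flopping one or the other of the two $\prP^2$'s in the central fibre. The pair $\widetilde Y_1,\widetilde Y_2$ are equivalent under an automorphism of $Z$, but $\widetilde Z$ is not equivalent to them; this is precisely the Fu--Namikawa phenomenon of non-equivalent symplectic resolutions. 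Your ``recognise and cite'' step would therefore fail for this case and has to be replaced by the more hands-on blow-up analysis the paper carries out.

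Two smaller points. First, $V_1\oplus V_3=\kxC^2\oplus S^3\kxC^2$ is not on Schwarz's list of coregular $Sl_2$--representations; the actual list for $Sl_2$ consists of $V_1$, $V_1^{\oplus 2}$, $V_1^{\oplus 3}$, $V_3$, $V_4$, $V_2$, $V_2^{\oplus 2}$, and $V_1\oplus V_2$. Second, your catalogue of model targets (Kleinian singularities, minimal orbit closures, antisymmetric determinantal varieties, products) does not cover the $V_4=S^4\kxC^2$ case: there the reduction is the $4$--dimensional quotient $(\kxC^2\oplus\kxC^2)/S_3$ (with $S_3$ acting through the $2$--dimensional reflection representation), whose symplectic resolution is the fibre of the Hilbert--Chow morphism $\Hilb^3(\kxC^2)\to S^3\kxC^2$, not one of the models you list.
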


More precisely, from Schwarz' list we see that there are eight coregular representations of $Sl_2$ which can be subdivided into four different types:
\begin{enumerate}
\item $\kxC^2$, $\kxC^2 \oplus \kxC^2$ and $\kxC^2 \oplus \kxC^2 \oplus \kxC^2$,
\item $S^3\kxC^2$ and $S^4\kxC^2$,
\item $\liesl_2$ and $\liesl_2 \oplus \liesl_2$,
\item $\liesl_2 \oplus \kxC^2$.
\end{enumerate}

\begin{rem}
These representations can be found in \cite{schw:1978} as follows.  
Tables $1a$ and $2a$ list all coregular representations of $Sl_n$, but we only consider $n = 2$. In table $1a$, items $1$ and $2$ coincide and give type (1). Items $18$ and $19$ are also the same, they contain type (3). Items $12$, $13$ and $20$ all encode representation (4). The other items do not apply to the case $n = 2$. In table $2a$, items $1$ and $2$ contain two further representations of  $Sl_2$, these are the symmetric powers, type (2). 

In addition, we consider item $1$ of table $4a$, namely a series of representations of $Sp_{2n}$ on at most $2n+1$ copies of $\kxC^{2n}$. Type (1) is a special case of this, so actually we will analyse this more general case instead.
\end{rem}

We begin with the representation of $Sp_{2n}$. To obtain a statement on symplecticity of the quotient a large effort is spent on examining normality of the zero fibre of the moment map:

\begin{thm}
\label{thmnormal}
For the action of $Sp_{2n}$ on $(\kxC^{2n})^{\oplus 2m}$ the zero fibre of the moment map is reduced if $m \geq 2n$ and even normal if $m \geq 2n + 1$.
\end{thm}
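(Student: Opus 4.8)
The plan is to make the zero fibre $\mu^{-1}(0)$ completely explicit and then to obtain reducedness and normality from Serre's criterion, combining a dimension count with the Jacobian criterion. Writing a point of $(\kxC^{2n})^{\oplus 2m}$ as a matrix $M\in\Mat_{2n\times 2m}(\kxC)$ whose columns are the $2m$ vectors, one computes from the defining formula $\mu_\xi(M)=\tfrac12\Omega(\xi M,M)$ that, after the $Sp_{2n}$-equivariant identification of the target $\liesp_{2n}^{*}$ with the space $\Sym^{2}\kxC^{2n}$ of symmetric $2n\times 2n$ matrices, the moment map becomes $M\mapsto M\bar S M^{T}$, where $\bar S$ is the nondegenerate (split) symmetric bilinear form on $\kxC^{2m}$ induced by the canonical pairing between the $V$- and the $V^{*}$-summand of $V\oplus V^{*}$. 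Hence, scheme-theoretically,
$$\mu^{-1}(0)\;=\;N\;:=\;\{\,M\in\Mat_{2n\times 2m}(\kxC)\ :\ M\bar S M^{T}=0\,\},$$
the variety of matrices whose row space is an isotropic subspace of $(\kxC^{2m},\bar S)$, cut out by the $\binom{2n+1}{2}=\dim\liesp_{2n}$ entries of the symmetric matrix $M\bar S M^{T}$.

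Next I would analyse $N$ through the stratification by the rank $r$ of $M$: the stratum $N_r$ fibres over the Grassmannian of isotropic $r$-planes in $(\kxC^{2m},\bar S)$, with fibre an open dense subset of the $2nr$-dimensional space of matrices with a prescribed $r$-dimensional row space. Since a maximal isotropic subspace has dimension $m\ge 2n\ge r$, every $M\in N$ lies in the closure of the rank-$2n$ locus --- extend its row space to an isotropic $2n$-plane --- so $N=\overline{N_{2n}}$, and computing $\dim N_{2n}$ from the fibration gives $\dim N=4mn-\dim Sp_{2n}$. Thus $N$ has codimension in $(\kxC^{2n})^{\oplus 2m}$ equal to the number $\dim\liesp_{2n}$ of defining equations, so $\mu^{-1}(0)$ is a complete intersection --- in particular Cohen--Macaulay, hence without embedded components and satisfying Serre's conditions $S_1$ and $S_2$. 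The same count shows that $N$ is irreducible when $m\ge 2n+1$ (the isotropic Grassmannian of $2n$-planes being connected in that range) and has two irreducible components when $m=2n$.

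Since $\mu^{-1}(0)$ is Cohen--Macaulay, reducedness will follow once it is generically reduced, i.e.\ once each irreducible component carries a smooth point. The differential is $d\mu_M(\dot M)=\dot M\bar S M^{T}+M\bar S\dot M^{T}$, the symmetrization of $\dot M\bar S M^{T}$; when $\rk M=2n$ the map $\bar S M^{T}\colon\kxC^{2n}\to\kxC^{2m}$ is injective, so precomposition with it surjects onto $\End(\kxC^{2n})$, and therefore $d\mu_M$ surjects onto $\Sym^{2}\kxC^{2n}$ --- so $M$ is a smooth point of $\mu^{-1}(0)$. Matrices of rank $2n$ exist in $N$ and are dense in it exactly when $m\ge 2n$, which yields reducedness in that range. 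For normality one still needs $R_1$, and a further inspection of $d\mu_M$ shows that it fails to be surjective precisely when $\rk M<2n$, so that $\operatorname{Sing}\bigl(\mu^{-1}(0)\bigr)=\{M\in N:\rk M\le 2n-1\}$; the dimension formula of the previous step gives that this locus has codimension $2m-4n+1$ in $N$, the lower-rank strata being strictly deeper. Since $2m-4n+1\ge 2$ exactly when $m\ge 2n+1$, $N$ satisfies $R_1$ in that range and is, together with $S_2$, normal.

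The substantive work is the second step: computing the dimensions of all rank strata precisely enough to see at once that $\mu^{-1}(0)$ is a complete intersection --- so that Cohen--Macaulayness, and with it $S_1$ and $S_2$, come for free --- and that $\operatorname{Sing}\bigl(\mu^{-1}(0)\bigr)$ has codimension exactly $2m-4n+1$ in $N$. Since this number is $\ge 1$ for every $m\ge 2n$, forcing generic smoothness and hence, via Cohen--Macaulayness, reducedness, and is $\ge 2$ precisely for $m\ge 2n+1$, it is responsible for both thresholds appearing in the statement.
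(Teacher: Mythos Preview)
Your argument is correct and complete. It is close in spirit to the paper's proof --- both identify $\mu^{-1}(0)$ with the variety of $2n\times 2m$ matrices with isotropic row space and both control it through the rank stratification over the isotropic Grassmannian --- but the execution differs in two places worth noting.

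First, where you apply Serre's criterion directly (Cohen--Macaulay from complete intersection, then $R_0$ resp.\ $R_1$ from the Jacobian), the paper invokes a lemma of Kaledin--Lehn--Sorger which packages exactly this for moment maps: it says that if the locus $N=\{x\in\mu^{-1}(0)\mid G_x\text{ not finite}\}$ has dimension at most $\ell-1$ (resp.\ $\ell-2$), with $\ell$ the expected dimension, then $\mu^{-1}(0)$ is a reduced (resp.\ normal) complete intersection. Your direct Jacobian computation --- that $d\mu_M$ surjects iff $\rk M=2n$ --- is in fact the content of that lemma specialised to this action, since for a moment map the corank of $d\mu_x$ equals $\dim\lieg_x$, and here $Sp_{2n}$ acts freely on a matrix precisely when its columns span $\kxC^{2n}$. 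So you have unwound the black box; this is more self-contained, while the paper's formulation makes clear that the same mechanism works for any moment map.

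Second, your dimension count for the rank-deficient locus is a direct fibration over $\Grass_{iso}(r,\kxC^{2m})$ for each $r$, giving the exact codimension $2m-4n+1$ of the top singular stratum. The paper instead bounds $\dim L$ (your $\bigcup_{r<2n}N_r$) by an induction on $d=2n$: a rank-deficient $\varphi$ factors through some $U/\kxC u_0$, so $L$ is covered by $\prP(U)$ many copies of the zero fibre in one dimension less, and the inductive formula yields the same bound $k-2d+1$. Your stratification is sharper (you get equality rather than an inequality, and you see immediately that lower strata are deeper), while the paper's induction is organised so as to apply uniformly for all $d$, including odd values, which it needs for the inductive step even though the symplectic group only exists in even dimension.
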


From this we deduce that the symplectic reduction actually is a symplectic variety in the extreme case $m = 2n + 1$. By means of nilpotent orbits we then analyse in which case the quotient is symplectically resolvable. 
As $Sl_2$ and $Sp_2$ coincide this covers the first type of representations of $Sl_2$ (1). 

Afterwards we shortly review type (2) and (3): 
Type (2) is classical and has already been treated by Hilbert. The resolutions we find there are well-known. 
Type (3) can be reduced to analysing the action of $SO_3$ on $\kxC^3$, and will be treated similarly to $Sp_{2n}\acts \kxC^{2n}$. 

The most interesting example is the action on $\liesl_2 \oplus \kxC^2$. There the symplectic reduction has a special configuration, which yields different symplectic resolutions: 
\begin{thm} The symplectic reduction $Z := \liesl_2 \oplus \kxC^2 \oplus (\liesl_2 \oplus \kxC^2)^*\sred Sl_2$ admits three non-isomorphic symplectic resolutions $\widetilde Z$, $\widetilde{Y}_1$ and $\widetilde{Y}_2$ connected by Mukai flops on the components $E_1$ resp. $E_2$ of the zero fibre of $\pi \colon \widetilde Z\to Z$:
$$ \begin{xy} \xymatrix{
& \Bl_{E_1}(\widetilde Z) \ar@{->}^{\text{\phantom{m}Mukai flop}}[ld] \ar@{->}[rd] & & \Bl_{E_2}(\widetilde Z) \ar@{->}^{\text{\phantom{m}Mukai flop}}[ld] \ar@{->}[rd] & \\
\widetilde{Y}_1 \ar@{->}[rd] & & \widetilde{Z} \ar@{->}[ld] \ar@{->}_{\pi}[dd] \ar@{->}[rd] & & \widetilde{Y}_2 \ar@{->}[ld] \\
& Y_1 \ar@{->}[rd] & & Y_2 \ar@{->}[ld] &\\
& & Z & &
}\end{xy} $$
While the resolutions $\widetilde Y_1$ and $\widetilde Y_2$ are equivalent in the sense of \cite{funa:2004}, $\widetilde Z$ is non-equivalent to them.
\end{thm}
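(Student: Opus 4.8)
The plan is to first make the reduction $Z$ completely explicit, then produce $\widetilde Z$ and verify it is a symplectic resolution, and only afterwards do the two Mukai flops and compare. Writing $M_1\in\liesl_2$ for the first summand, $M_2\in\liesl_2\cong\liesl_2^*$ for its dual (self-duality via the Killing form) and $i\in\kxC^2$, $j\in(\kxC^2)^*$ for the $\kxC^2\oplus(\kxC^2)^*$-part, the moment-map equation reads $[M_1,M_2]+(ij)_0=0$, where $(ij)_0$ is the traceless part of the rank-$\le 1$ matrix $i\cdot j$ (outer product of the column $i$ and the row $j$). Using the first fundamental theorem for $Sl_2=Sp_2$ I would write down generators of the invariant ring of $\mu^{-1}(0)$ — the traces of words in $M_1,M_2$, the contractions obtained by feeding $i$ and $j$ through such words, and the $2\times2$ determinants of the vectors $i,M_1i,M_2i$ and of the covectors $j,jM_1,jM_2$ — impose the moment relation, and so present $Z$ as an explicit affine variety in some $\kxC^N$. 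From this presentation I would read off the contracting $\kxC^*$-action with $0$ as its unique fixed point, compute $\mathrm{Sing}(Z)$ by the Jacobian criterion, and pin down the ``special configuration'' at $0$. I would also record the involution of $Z$ coming from the self-duality $V\cong V^*$ (the standard representation of $Sl_2$ is symplectically self-dual and $\liesl_2$ is self-dual via the Killing form), which swaps the two summands of $V\oplus V^*$ and descends to an (anti-symplectic) automorphism $\sigma$ of $Z$.

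Next I would construct $\widetilde Z$. The natural candidate is an explicit — possibly iterated — blow-up of $Z$ along a well-chosen ideal supported on $\mathrm{Sing}(Z)$; intrinsically one expects this to be a symplectic quotient by $Sl_2$ of the cotangent bundle of an $Sl_2$-equivariant modification $\widetilde V\to V$ resolving the nilpotent directions of the $\liesl_2$-summand via the flag variety $Sl_2/B\cong\prP^1$. Smoothness of $\widetilde Z$ would be checked in affine charts (or via freeness of the $Sl_2$-action on the relevant fibre), and $\pi\colon\widetilde Z\to Z$ is projective and birational by construction. To see that $\widetilde Z$ is symplectic I would show that the reduced symplectic form on $Z^{\mathrm{sm}}=(\mu^{-1}(0)/Sl_2)^{\mathrm{sm}}$ extends to a nowhere-degenerate holomorphic $2$-form on all of $\widetilde Z$, equivalently that $\pi$ is crepant, $K_{\widetilde Z}=\pi^*K_Z=0$, which I would verify by a discrepancy computation along the exceptional set. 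Together with the paper's classification this also shows that $Z$, with reduced structure and on each of its irreducible components, is a symplectic variety in the sense of Beauville--Namikawa \cite{bea:2000}.

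With $\widetilde Z$ in hand I would analyse the central fibre $\pi^{-1}(0)$, which for a symplectic resolution is at most $2$-dimensional, and identify two of its irreducible components $E_1,E_2\cong\prP^2$. For each $E_i$ I would check that it is Lagrangian with normal bundle $N_{E_i/\widetilde Z}\cong\Omega_{\prP^2}$, so that a neighbourhood of $E_i$ in $\widetilde Z$ is isomorphic to a neighbourhood of the zero section in $T^*\prP^2$ and its contraction produces locally the minimal nilpotent orbit closure of $\liesl_3$. The flop of $E_i$ is then the standard Mukai flop of \cite{funa:2004}: $\Bl_{E_i}(\widetilde Z)$ has exceptional divisor the incidence variety in $\prP^2\times(\prP^2)^*$, which carries a second $\prP^1$-bundle structure along which one blows down to obtain $\widetilde Y_i$; as Mukai flops of Lagrangian $\prP^n$'s are crepant and preserve the symplectic form, $\widetilde Y_i\to Z$ is again a symplectic resolution. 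Letting $Y_i$ be the common contraction of $E_i$ in $\widetilde Z$ and of the dual $\prP^2$ in $\widetilde Y_i$ assembles exactly the asserted diagram.

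Finally the (non)equivalences. Each birational map $\widetilde Z\dashrightarrow\widetilde Y_i$ is a nontrivial Mukai flop, hence not an isomorphism over $Z$; to rule out an abstract isomorphism $\widetilde Z\cong\widetilde Y_i$ I would compare birational invariants — the structure of the exceptional locus of the map to $Z$, or the shape of the nef and movable cones — which should differ between the $\sigma$-symmetric resolution $\widetilde Z$ and the unsymmetric $\widetilde Y_i$. Equivalence of $\widetilde Y_1$ and $\widetilde Y_2$ in the sense of \cite{funa:2004} would instead follow directly from $\sigma$: since $\sigma$ interchanges $E_1$ and $E_2$ it conjugates the flop of $E_1$ into that of $E_2$, so $\sigma$ lifts to an isomorphism $\widetilde Y_1\cong\widetilde Y_2$ over $Z$, while $\widetilde Z$ lies in the $\sigma$-stable chamber of the movable cone and is identified with neither. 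I expect the genuine difficulty to sit in the middle steps: constructing $\widetilde Z$ and proving $\pi$ is crepant forces one to understand the special configuration of $Z$ at $0$ in full, and the abstract non-isomorphism $\widetilde Z\not\cong\widetilde Y_i$ is delicate precisely because all three resolutions are locally indistinguishable.
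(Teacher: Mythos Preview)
Your outline is broadly correct and follows the same arc as the paper, but the execution diverges at two points worth flagging.

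First, the construction of $\widetilde Z$ and its symplecticity. In the paper $\widetilde Z$ is simply $\Bl_{Z_{sing}}(Z)$, a single blow-up; smoothness is checked in charts and symplecticity is obtained not via a discrepancy computation but by showing $\pi$ is semismall (the fibres are a point, a $\prP^1$, and $E_1\cup E_2$ over the three strata) and then invoking the elementary fact that a semismall resolution which is symplectic outside a codimension-$4$ locus is symplectic everywhere. Your ``intrinsic'' candidate via an $Sl_2$-equivariant modification of $V$ through $Sl_2/B$ is not how the paper proceeds and would need substantial work to match up with the actual $\widetilde Z$.

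Second, and more substantively, the flops. You propose to verify $N_{E_i/\widetilde Z}\cong\Omega_{\prP^2}$ and perform the Mukai flop abstractly, constructing $Y_i$ afterwards as the common contraction. The paper goes the other way: it first builds $Y_2$ as the blow-up of $Z$ along the smaller locus $\mathcal V(z_4,z_7,z_8,\,z_2^2-4z_1z_3)$, checks that $Y_2$ is locally (after a linear change of coordinates) the minimal nilpotent orbit closure $\{A\in\liesl_3\mid\rk A\le 1\}$, and then uses the two known projective symplectic resolutions of that orbit by $T^*\prP^2$ and $T^*(\prP^2)^*$. The universal property of the blow-up gives a morphism $\widetilde Z\to Y_2$, forcing $\widetilde Z$ to coincide with one of these and $\widetilde Y_2$ with the other. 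The payoff of this route is that projectivity of $\widetilde Y_i$ is automatic; in your approach the abstract Mukai flop of a projective variety need not be projective, so you would still owe an argument here.

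For the (non)equivalences your plan and the paper's agree: the involution swapping $(z_5,z_6)\leftrightarrow(z_7,z_8)$ gives the equivalence $\widetilde Y_1\simeq\widetilde Y_2$ in the sense of Fu--Namikawa, while the non-equivalence of $\widetilde Z$ with either is not argued directly in the paper but simply attributed to \cite{funa:2004}.
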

This gives an explicit description of the two non-equivalent symplectic resolutions constructed by Fu and Namikawa in \cite{funa:2004}. Our construction by blowing up and using the cotangent bundle shows that both resolutions are algebraic.

\begin{Ack}
I would like to thank Manfred Lehn for posing the original problem and his excellent supervision of my work. The basic idea for the proof of normality is due to him. Further, I thank Christoph Sorger for his ideas to improve the presentation. I am very greatful to Baohua Fu for several discussions, corrections and the help with the Mukai flop.

While this paper was written I have been partially supported by DAAD and SFB/TR 45, which is greatly acknowledged.
\end{Ack}

\section{Symplectic reductions}
\label{sred}

Before turning to the different examples we give a short introduction to symplectic geometry and work out the aspects we need concerning symplectic resolutions. In order to do so we introduce the moment map, use it to define a modified quotient, and give a criterion when this reduction actually is symplectic.
\vspace{1em}

A normal algebraic variety $X$ over $\kxC$ with symplectic structure $\sigma \in \Gamma(X_{reg},\Omega_{X_{reg}}^2)$ on its regular part is called a \emph{symplectic variety} if for one (hence any) resolution of singularities $f\colon\widetilde{X} \to X$ the section $\sigma$ extends to $\widetilde{X}$. 
A resolution $f$ is called \emph{symplectic} if $\sigma$ extends to a symplectic structure on $\widetilde X$, i.e. is everywhere non-degenerate.

Symplectic resolutions are semismall by Kaledin \cite{kal:2006} and Namikawa \cite{nam:2001}. 
The property of semismallness can also assure symplecticity of certain resolutions:

\begin{prop}
\label{cohalbsympl}
Let $X$ be a variety with symplectic form on $X_{reg}$. If $\pi\colon \widetilde{X} \to X$ is a semismall resolution such that for some closed subset $F$ of $X$ of codimension $\geq 4$ the restriction $\widetilde{X}\setminus \pi^{-1}(F) \to X \setminus F$ is a symplectic resolution, then $\pi$ is a symplectic resolution.
\end{prop}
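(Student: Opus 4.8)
The plan is to show that the extended two-form is non-degenerate everywhere on $\widetilde X$ by combining the hypothesis on the complement with a codimension estimate for the locus where degeneracy could occur. First I would recall that since $X$ is normal with a symplectic form on $X_{reg}$ and $\pi$ is a resolution, the form $\sigma$ pulls back to a regular $2$-form $\widetilde\sigma$ on $\pi^{-1}(X_{reg})$; because a resolution is in particular proper and birational and $\widetilde X$ is smooth, and because $\pi^{-1}(X_{reg})$ has complement of codimension $\geq 2$ in $\widetilde X$ once we know $X$ has symplectic (hence canonical, hence normal with singular locus of codimension $\geq 2$) singularities, $\widetilde\sigma$ extends to a regular $2$-form $\Omega$ on all of $\widetilde X$ — this is exactly the definition of $X$ being a symplectic variety, which we may take as granted here, or derive from the fact that a semismall resolution has exceptional locus of codimension $\geq 1$ and Hartogs applies on the smooth $\widetilde X$. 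So the only thing to prove is that $\Omega$ is nowhere degenerate.

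The key step is the following: the degeneracy locus $D := \{x \in \widetilde X : \Omega_x \text{ is degenerate}\}$ is the zero locus of $\Omega^{\wedge n}$ (where $\dim \widetilde X = 2n$), a section of the line bundle $\omega_{\widetilde X} = \det \Omega^1_{\widetilde X}$, hence $D$ is either empty or a divisor, i.e.\ pure of codimension $1$ in $\widetilde X$ wherever it is nonempty. Now I would use semismallness: for a semismall map, every irreducible component $E_i$ of the exceptional locus satisfies $2\operatorname{codim}_{\widetilde X} E_i \geq \operatorname{codim}_X \pi(E_i)$, equivalently $\dim E_i = \dim \widetilde X - \operatorname{codim}_{\widetilde X} E_i$ and $\operatorname{codim}_{\widetilde X} E_i \geq \tfrac12 \operatorname{codim}_X \pi(E_i)$. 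Since $\Omega$ restricted to $\widetilde X \setminus \pi^{-1}(F)$ is non-degenerate by hypothesis, $D \subseteq \pi^{-1}(F)$. If $E$ is a codimension-$1$ component of $D$, then $\pi(E) \subseteq F$ has codimension $\geq 4$ in $X$, so by semismallness $1 = \operatorname{codim}_{\widetilde X} E \geq \tfrac12 \operatorname{codim}_X \pi(E) \geq 2$, a contradiction. Hence $D = \emptyset$ and $\Omega$ is symplectic, so $\pi$ is a symplectic resolution.

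The main obstacle — really the only subtle point — is making precise why $D$, if nonempty, is pure of codimension $1$: this requires that $\Omega^{\wedge n}$ is not identically zero, which holds because it is nonzero on the dense open set $\widetilde X \setminus \pi^{-1}(F)$ (where $\Omega$ is symplectic), so $\Omega^{\wedge n}$ is a nonzero section of $\omega_{\widetilde X}$ and its vanishing locus is a genuine (possibly empty) effective divisor. One must also be slightly careful that the set $D$ is closed and that $E \subseteq D$ with $\pi(E) \subseteq F$ forces $E \subseteq \pi^{-1}(F)$; this is immediate. Finally, one should note that the irreducible components $E$ of $D$ are contained in the exceptional locus of $\pi$ (since $\pi$ is an isomorphism over $X_{reg}$ and $\Omega$ is symplectic there, or more simply since $D \subseteq \pi^{-1}(F)$ and $F$ can be taken inside $X_{sing}$), so semismallness applies to them. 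The codimension bound $\geq 4$ in the hypothesis is exactly what is needed: it beats $2 \times 1 = 2$ with room to spare, reflecting that symplectic resolutions contract over loci of codimension $\geq 2$, never $\geq 4$, unless the fibre jump is strictly larger than semismallness permits.
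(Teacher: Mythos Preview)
Your argument is correct and follows essentially the same route as the paper's proof: extend the pulled-back form to all of $\widetilde X$, observe that its degeneracy locus $D=\{\Omega^{\wedge n}=0\}$ would have to be a divisor, and then use semismallness to bound $\operatorname{codim}_{\widetilde X}\pi^{-1}(F)\geq 2$, contradicting $D\subseteq\pi^{-1}(F)$. One small fix in your extension step: Hartogs needs codimension $\geq 2$, not $\geq 1$, so the clean justification is that $\Omega$ is already defined on $\widetilde X\setminus\pi^{-1}(F)$ by hypothesis and then the same semismallness estimate $\operatorname{codim}_{\widetilde X}\pi^{-1}(F)\geq \tfrac12\operatorname{codim}_X F\geq 2$ lets you extend across $\pi^{-1}(F)$.
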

\begin{proof}
Since $\widetilde{X}$ is smooth, the induced symplectic form on $\pi^{-1}(X_{reg})$ extends to a form $\sigma$ on $\widetilde{X}$ because the bundle $\Omega^2_{X_{reg}}$ extends, and $\sigma$ is symplectic outside $\pi^{-1}(U)$. If the extension was degenerate, its determinant would vanish along a divisor.
But as $\pi$ is semismall, the codimension of $\pi^{-1}(U)$ in $\widetilde{X}$ is at least $2$, and $\sigma$ is not symplectic at most in codimension $2$. So $\det \sigma$ cannot vanish at all.
\end{proof}


Now let us turn to the symplectic double $V \oplus V^*$ of a vector space $V$. Its symplectic structure is given by $\sigma((v, \eta),(w, \zeta)) = \eta(w) - \zeta(v)$ and $G$ acts symplectically on it via $g(v,\eta) = (gv, \eta \circ g^{-1})$. 

If there is a non-degenerate bilinear form $<\hspace{-0.5ex}\cdot,\cdot\hspace{-0.5ex}>\colon V \times V \to \kxC$, this action can be computed in the following way: as a vector space $V$ can be identified with its dual via the induced isomorphism $V \to V^*$, $v \mapsto <\hspace{-0.5ex}\cdot,v\hspace{-0.5ex}>$. Having $v, w \in V$, the dual action $g\cdot w$ is defined via $<\hspace{-0.5ex}v,g\cdot w\hspace{-0.5ex}> =  <\hspace{-0.5ex}\cdot,w\hspace{-0.5ex}>\hspace{-1ex}(g^{-1}v)\hspace{-0.5ex} = <\hspace{-0.5ex}g^{-1}v,w\hspace{-0.5ex}>$
by taking the adjoint of $g^{-1}$  in the last term.
If $<\hspace{-0.5ex}\cdot,\cdot\hspace{-0.5ex}>$ is even $G$--invariant, then $<\hspace{-0.5ex}g^{-1}v,w\hspace{-0.5ex}>\, = \,<\hspace{-0.5ex}gg^{-1}v,gw\hspace{-0.5ex}>\, = \,<\hspace{-0.5ex}v,gw\hspace{-0.5ex}>,$ 
therefore $V = V^*$ as a representation.
\vspace{1em}

We recall the moment map which plays a central role in our considerations. 
Let $(V,\sigma)$ be a symplectic vector space and $G$ a semisimple group with Lie algebra $\lieg$. Equip $\lieg^*$ with the coadjoint action. If $G$ acts symplectically on $V$, there exists a unique $G$--equivariant map $\mu: V \to \lieg^*$ which satisfies $d\mu_x(\xi)(A) = \sigma(\xi,Ax)$ for all $x \in V,\, \xi \in T_xV, \; A \in \lieg$. This map $\mu$ is called the \emph{moment map}.

On the symplectic double of a vector space the moment map turns out to be 
$$\mu\colon V \oplus V^* \longrightarrow \lieg^*,\; (v,\eta) \longmapsto (f\colon A \mapsto \eta(Av)).$$

Let us denote by $\II_{\mu}$ the ideal corresponding to $\mu^{-1}(0)$, i.e. $\mu^{-1}(0) = \Spec(\kxC[V]/\II_{\mu})$.

\begin{rem}
For every symplectic action $G \times V \to V$ with moment map $\mu$ the $G$--equivariance of $\mu$ ensures that $\mu^{-1}(0)$ is a $G$--invariant subset of $V$. This is why there is also an action of $G$ on $\mu^{-1}(0)$ and we can consider its quotient:
\end{rem}

\begin{defin}
Let $V$ be a symplectic vector space with symplectic $G$--action and corresponding moment map $\mu$. The \emph{symplectic reduction} of $V$ is defined to be
$$V\sred G := \mu^{-1}(0)\red G = \Spec(\kxC[V]/\II_{\mu})^G.$$
\end{defin}

\begin{caut} 
In spite of its misleading name, the symplectic reduction need not be a symplectic variety.
\end{caut}

\begin{prop}
\label{satzsymplStr}
Let $G \acts V$ be a symplectic action on an symplectic vector space $V$, let $\mu\colon V \to \lieg^*$ denote the moment map. Then for every $x \in \mu^{-1}(0)$ with closed orbit $Gx \subset \mu^{-1}(0)$ and trivial isotropy group, the image $\overline{x} \in \mu^{-1}(0)\red G$ is a regular point and $T_{\overline{x}} (\mu^{-1}(0)\red G) = T_x \mu^{-1}(0) / \lieg x = (\lieg x)^{\perp} / \lieg x$ is a symplectic vector space.
\end{prop}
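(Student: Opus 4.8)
The plan is to proceed in three steps: first establish that $\overline{x}$ is a regular point of the quotient with the expected tangent space, then identify $T_x\mu^{-1}(0)$ with $(\lieg x)^\perp$, and finally check that the symplectic form $\sigma$ descends to a non-degenerate form on the quotient $(\lieg x)^\perp/\lieg x$.

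First I would recall Luna's slice theorem: since $Gx$ is closed in $\mu^{-1}(0)$ and the isotropy group $G_x$ is trivial, there is a $G$-stable étale slice, and the quotient map $\mu^{-1}(0)\to\mu^{-1}(0)\red G$ near $x$ looks like the projection $G\times_{G_x}(\text{slice})\to\text{slice}$; triviality of $G_x$ forces $\overline{x}$ to be a smooth point and gives $T_{\overline{x}}(\mu^{-1}(0)\red G)\cong T_x\mu^{-1}(0)/T_x(Gx) = T_x\mu^{-1}(0)/\lieg x$. Of course one must know that $\mu^{-1}(0)$ is itself smooth at $x$; this follows because the differential $d\mu_x$ is surjective there — indeed $d\mu_x(\xi)(A)=\sigma(\xi,Ax)$, and since $G_x$ is trivial the map $A\mapsto Ax$ is injective, so $\lieg x$ has dimension $\dim\lieg$; by non-degeneracy of $\sigma$ the functional $\xi\mapsto\sigma(\xi,Ax)$ is nonzero whenever $Ax\neq 0$, which shows $d\mu_x$ has full rank $\dim\lieg$. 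Hence $\dim_x\mu^{-1}(0)=\dim V-\dim\lieg$, the fibre is smooth at $x$, and $T_x\mu^{-1}(0)=\ker d\mu_x=\{\xi:\sigma(\xi,Ax)=0\ \forall A\in\lieg\}=(\lieg x)^\perp$.

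It then remains to see that the induced form on $(\lieg x)^\perp/\lieg x$ is symplectic. The orbit $Gx$ is isotropic: for $A,B\in\lieg$ one computes $\sigma(Ax,Bx)$, which by the moment map identity equals $d\mu_x(Ax)(B)$, and this vanishes because $Ax\in T_x\mu^{-1}(0)=\ker d\mu_x$. Thus $\lieg x\subseteq(\lieg x)^\perp$, the quotient $(\lieg x)^\perp/\lieg x$ makes sense, and $\sigma$ restricted to $(\lieg x)^\perp$ descends to a well-defined alternating bilinear form on it. Non-degeneracy is a piece of linear symplectic algebra: for any subspace $W\subseteq V$ one has $(W^\perp)^\perp=W$ and $\dim W^\perp=\dim V-\dim W$, so the radical of $\sigma|_{(\lieg x)^\perp}$ is exactly $(\lieg x)^\perp\cap((\lieg x)^\perp)^\perp=(\lieg x)^\perp\cap\lieg x=\lieg x$; quotienting by this radical yields a non-degenerate form. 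This also shows $\dim(\lieg x)^\perp/\lieg x=\dim V-2\dim\lieg$ is even, consistent with the existence of a symplectic structure.

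The only real subtlety — the "hard part" — is the appeal to Luna's slice theorem to get smoothness of the GIT quotient at $\overline{x}$ and the identification of its tangent space; everything else is the standard linear-symplectic bookkeeping with $\perp$ and the moment map identity. One should be a little careful that the slice theorem is being applied to the (possibly non-reduced) scheme $\mu^{-1}(0)=\Spec(\kxC[V]/\II_\mu)$, but since we have just shown this scheme is smooth — hence reduced — at $x$, there is no issue, and the conclusion $T_{\overline x}(\mu^{-1}(0)\red G)=T_x\mu^{-1}(0)/\lieg x$ holds as stated.
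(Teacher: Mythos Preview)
Your proof is correct and follows essentially the same approach as the paper: Luna's slice theorem for regularity and the tangent space identification, the moment map identity $d\mu_x(\xi)(A)=\sigma(\xi,Ax)$ to get $T_x\mu^{-1}(0)=(\lieg x)^\perp$, and the observation that $\lieg x\subset(\lieg x)^\perp$ so the quotient is symplectic. You supply a bit more detail than the paper---explicitly deriving surjectivity of $d\mu_x$ from triviality of $G_x$, and spelling out the radical computation for non-degeneracy---but the argument is the same.
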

\begin{proof}
Luna's slice theorem (cf. \cite{lun:1973}) implies regularity and the first equality. 
The differential $d\mu_x:T_xV \to T_{\mu(x)}\lieg^*$ has maximal rank in all points $x \in \mu^{-1}(0)_{reg}$ and we have $T_x\mu^{-1}(0) = \ker d\mu_x = \{ \xi \in T_xV \mid d\mu_x(\xi)(A) = 0 \; \forall \, A \in T_xV \}$.
Together with the property $d\mu_x(\xi)(A) = \sigma(\xi,Ax)$ of the moment map this implies
$$ T_x\mu^{-1}(0) = \{ \xi \in T_xV \mid \sigma(\xi,Ax) = 0 \; \forall \, A \in \lieg \} = (\lieg x)^{\perp}.$$
This shows the second equality. 
By $G$--invariance, if $x$ is contained in $\mu^{-1}(0)$ so is the whole orbit $Gx$. For this reason $\lieg x$ is contained in $T_x\mu^{-1}(0)$, and it is an isotropic subspace of $T_xV$ because $\lieg x \subset T_x\mu^{-1}(0) = (\lieg x)^{\perp}$. This implies $(\lieg x)^{\perp} / \lieg x$ is a symplectic vector space. In particular there is a symplectic structure on the set of points with closed orbit and trivial isotropy group.
\end{proof}

This implies the following criterion for the existence of a symplectic structure on the symplectic reduction:

\begin{prop}
\label{complint}
$(\mu^{-1}(0)\red G)_{reg}$ possesses a symplectic structure if $\mu^{-1}(0)$ is normal, the isotropy group of every point in $\mu^{-1}(0)_{reg}$ is trivial and for the singular parts $\mu^{-1}(0)_{sing}\red G \subset (\mu^{-1}(0)\red G)_{sing}$ holds.
\end{prop}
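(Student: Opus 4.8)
The plan is to identify the smooth locus $(\mu^{-1}(0)\red G)_{reg}$ with the open set on which Proposition \ref{satzsymplStr} has already produced a symplectic form, and then to derive that identification from the three hypotheses. Write $X:=\mu^{-1}(0)$ and let $\pi\colon X\to X\red G$ denote the quotient map. Let $S\subseteq X\red G$ be the set of points which are images under $\pi$ of some $x\in X$ whose orbit $Gx$ is closed and whose isotropy group is trivial. Proposition \ref{satzsymplStr} tells us two things about $S$: first, $S\subseteq (X\red G)_{reg}$; second, $S$ carries a symplectic structure — concretely the descent of the symplectic form $\sigma$ of the double, which is closed because $\sigma$ is (and, by Luna's slice theorem, $S$ is open in $X\red G$, the fibre of $\pi$ over a point of $S$ being a single closed orbit). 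So the whole task reduces to proving the reverse inclusion $(X\red G)_{reg}\subseteq S$.

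This is where I would use the hypothesis on the singular loci. Since $G$ acts by automorphisms, $X_{sing}$ is a $G$-invariant closed subvariety of $X$; because $G$ is reductive, $\kxC[X]^G\to\kxC[X_{sing}]^G$ is surjective, so the natural morphism $X_{sing}\red G\to X\red G$ is a closed immersion whose image is the closed set $\pi(X_{sing})$. Hence the hypothesis $\mu^{-1}(0)_{sing}\red G\subseteq(\mu^{-1}(0)\red G)_{sing}$ means precisely $\pi(X_{sing})\subseteq(X\red G)_{sing}$. Now take $\bar x\in(X\red G)_{reg}$. Then $\bar x\notin\pi(X_{sing})$, so $\pi^{-1}(\bar x)\cap X_{sing}=\emptyset$, i.e.\ $\pi^{-1}(\bar x)\subseteq X_{reg}=\mu^{-1}(0)_{reg}$. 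As $G$ is reductive the affine fibre $\pi^{-1}(\bar x)$ contains a unique closed orbit $Gx_0$, and $x_0\in\mu^{-1}(0)_{reg}$, so by the triviality hypothesis the isotropy group of $x_0$ is trivial. Thus $x_0$ has closed orbit and trivial isotropy, whence $\bar x=\pi(x_0)\in S$. Together with the previous paragraph this gives $(X\red G)_{reg}=S$, and the latter carries a symplectic structure. Normality of $\mu^{-1}(0)$ is used only to guarantee that $\mu^{-1}(0)$ is reduced, so that its regular locus is a dense open subvariety, and that the quotient $\mu^{-1}(0)\red G$ is normal (again since $G$ is reductive), so that "regular locus" has its usual meaning.

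The argument is essentially formal, and I expect the one genuinely delicate point to be the translation of the hypothesis $\mu^{-1}(0)_{sing}\red G\subseteq(\mu^{-1}(0)\red G)_{sing}$ into the fibrewise statement $\pi^{-1}((X\red G)_{reg})\subseteq X_{reg}$: one must invoke reductivity of $G$ to know that the quotient of the $G$-invariant closed subvariety $X_{sing}$ is realized inside $X\red G$ as its closed image, not merely as a variety mapping there, since otherwise one cannot pass from an inclusion of invariant-ring data to an inclusion of point sets. Everything after that — the existence of a closed orbit in each fibre, the appeal to the triviality hypothesis, and the invocation of Proposition \ref{satzsymplStr} — is routine, and the openness of $S$ together with the descent of $\sigma$ to a closed non-degenerate form is the Luna-slice/Marsden--Weinstein content already packaged in Proposition \ref{satzsymplStr}, so no further work is needed there.
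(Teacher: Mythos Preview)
Your proof is correct and follows the same line as the paper's: show that every regular point of the quotient has a representative with closed orbit lying in $\mu^{-1}(0)_{reg}$ (hence with trivial isotropy by hypothesis), so that Proposition~\ref{satzsymplStr} applies. The paper's proof is a single sentence stating exactly this conclusion, whereas you have carefully spelled out the auxiliary facts (the interpretation of $\mu^{-1}(0)_{sing}\red G$ as $\pi(X_{sing})$, the existence of a closed orbit in each fibre, the role of normality) that the paper leaves implicit.
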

\begin{proof} Since $\mu^{-1}(0)_{sing}\red G \subset (\mu^{-1}(0)\red G)_{sing}$ every $\overline{x} \in (\mu^{-1}(0)\red G)_{reg}$ has a representative $x$ with a closed orbit in $\mu^{-1}(0)_{reg}$.
\end{proof}

\section{The action of $Sp_{2n}$ on $(\kxC^{2n})^{\oplus m}$}
\label{KapSp}

Throughout this section we will consider the symplectic group $Sp_{2n}$ with defining matrix $J = \left( \begin{smallmatrix} 0 & I_n \\ -I_n & 0 \end{smallmatrix}\right)$, and its quadratic analogue $Q = \left(\begin{smallmatrix} 0 & I_m \\ I_m & 0 \end{smallmatrix}\right)$.

We examine the symplectic double of the action
$$ \varrho_m^n \colon Sp_{2n} \times (\kxC^{2n})^{\oplus m} \to (\kxC^{2n})^{\oplus m},\; (g, x^{(1)}, \hdots, x^{(m)}) \mapsto (gx^{(1)}, \hdots, gx^{(m)}).$$

To compute its symplectic double remark that the non-degenerate bilinear form $\kxC^{2n} \times \kxC^{2n} \to \kxC$, $(x,y) \mapsto y^tJx$ is $Sp_{2n}$--invariant because $J$ is. Thus we see the representation is self-dual so that the symplectic double of $\varrho_{m}^n$ is $\varrho_{2m}^n$.

Writing $x^{(i)} = \begin{pmatrix} x_{1i} & \hdots & x_{2n,i} \end{pmatrix}^t$ and arranging these vectors as matrices $X' := (x^{(1)} \mid \hdots \mid x^{(m)})$, $X'' := (x^{(m+1)} \mid \hdots \mid x^{(2m)})$ and $X = (X', X'')$ we can write this action as multiplication from the left $gX'$ resp. $gX$.
\vspace{1em}


In order to determine the symplectic reduction we have to know the moment map on which we focus now:

\begin{prop}\label{SpImpuls}
The moment map of the action $Sp_{2n} \acts (\kxC^{2n})^{\oplus 2m}$ is
$$\mu: (\kxC^{2n})^{\oplus 2m} \rightarrow \liesp_{2n},\; X \mapsto \frac{1}{2}XQX^tJ.$$
\end{prop}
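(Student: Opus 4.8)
The plan is to identify the moment map $\mu$ by directly using its defining property $d\mu_X(\xi)(A) = \sigma(\xi, AX)$ for all $X \in (\kxC^{2n})^{\oplus 2m}$, $\xi \in T_X (\kxC^{2n})^{\oplus 2m}$, $A \in \liesp_{2n}$, together with the $G$-equivariance and uniqueness guaranteed by the general discussion of the moment map in Section \ref{sred}. Since the target is a linear space, I will freely identify $\mu$ with its components and $\liesp_{2n}^*$ with $\liesp_{2n}$ via the trace form $\langle A, B \rangle = \tr(AB)$, so that the formula to be verified reads $\tr(A \cdot \mu(X)) = \sigma(\xi, AX)$ after differentiating, or, more conveniently, I will guess the candidate $\mu(X) = \tfrac12 X Q X^t J$ and check its differential against $\sigma(\cdot, A\cdot)$.

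First I would write out the symplectic form explicitly in matrix terms. Recall $\sigma((v,\eta),(w,\zeta)) = \eta(w) - \zeta(v)$; under the self-duality given by the $Sp_{2n}$-invariant form $(x,y) \mapsto y^t J x$, arranging the $2m$ vectors into the block matrix $X = (X', X'')$, the symplectic pairing of two such configurations $X$ and $Y$ becomes a sum of terms $y^{(i)\,t} J x^{(i)}$ with the appropriate signs coming from the block structure of $Q$; concretely $\sigma(X,Y) = \tr\bigl( (JX) Q Y^t \bigr)$ up to sign conventions, which I would pin down carefully. Then I would compute, for fixed $A \in \liesp_{2n}$ and a tangent vector $\xi$ (itself a $2n \times 2m$ matrix), the expression $\sigma(\xi, AX)$ and show it equals the directional derivative at $X$ in the direction $\xi$ of the function $X \mapsto \tr\bigl( A \cdot \tfrac12 X Q X^t J\bigr)$. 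The derivative of the quadratic form $X \mapsto \tfrac12 X Q X^t J$ in direction $\xi$ is $\tfrac12(\xi Q X^t J + X Q \xi^t J)$, and after taking the trace against $A$ and using $Q = Q^t$, $J^t = -J$, $A^t J = -JA$ (the latter being exactly the condition $A \in \liesp_{2n}$), the two terms combine to give $\sigma(\xi, AX)$; this is the computational heart of the argument.

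It remains to check that the candidate actually lands in $\liesp_{2n}$, i.e. that $\bigl(\tfrac12 X Q X^t J\bigr)^t J = - J \bigl(\tfrac12 X Q X^t J\bigr)$, equivalently $(XQX^t J)^t J + J (X Q X^t J) = 0$; using $Q^t = Q$ and $J^t = -J$ this reduces to $-JXQX^tJ + JXQX^tJ = 0$, which is immediate. Finally, $G$-equivariance follows from the invariance of $Q$ and $J$ under the relevant actions (for $g \in Sp_{2n}$ one has $g^t J g = J$, and $g$ acts on $X$ by left multiplication), so by the uniqueness part of the characterization of the moment map this candidate is \emph{the} moment map. I expect the only real obstacle to be bookkeeping: getting the sign conventions and the precise shape of $\sigma$ in block-matrix form consistent with the self-duality identification, so that the factor $\tfrac12$ and all signs come out exactly right; once $\sigma(\xi, AX)$ is written correctly the verification is a short trace manipulation using $A^t J = -JA$.
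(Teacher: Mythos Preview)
Your approach is correct, but it differs from the paper's in one respect worth noting. The paper does not go back to the differential characterisation $d\mu_X(\xi)(A)=\sigma(\xi,AX)$; instead it uses the ready-made formula for the moment map on a symplectic double, $\mu(v,\eta)(A)=\eta(Av)$, established earlier in Section~\ref{sred}. With the pairing $(Y,Z)\mapsto\tr(Z^tJY)$ identifying $(\kxC^{2n})^{\oplus m}$ with its dual, this immediately gives $\mu(X',X'')(A)=\tr((X'')^tJAX')$; the only computation is then the trace identity $\tr(AX'(X'')^tJ)=\tr(AX''(X')^tJ)$, which follows from the symmetry of $JA$ for $A\in\liesp_{2n}$, and the observation $XQX^t=X'(X'')^t+X''(X')^t$. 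This bypasses both the derivative computation and the separate checks that the image lies in $\liesp_{2n}$ and that the map is equivariant, since those are built into the general formula.

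Your route---guess the candidate, differentiate, and match against $\sigma(\xi,AX)$---is a legitimate first-principles verification and uses exactly the same algebraic ingredient ($A^tJ=-JA$) at the crucial step. It is slightly longer because you must also verify the target and the equivariance by hand, and, as you yourself flag, it is more exposed to sign errors when writing $\sigma$ in block-matrix form. The paper's approach buys brevity by cashing in the general formula; yours buys independence from that earlier discussion.
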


\begin{proof}
Written as matrices, the $Sp_{2n}$--equivariant pairing identifying $(\kxC^{2n})^{\oplus m}$ with its dual is $(\kxC^{2n})^{\oplus m} \times (\kxC^{2n})^{\oplus m} \to \kxC$, $(Y,Z) \mapsto \tr(Z^tJY)$,
so $\mu: (\kxC^{2n})^{\oplus 2m} \rightarrow \liesp_{2n}^*$, $\mu(X',X'')(A) = \tr((X'')^tJAX')$.
Using the fact $\tr((X'')^tJAX') = \tr(AX'(X'')^tJ)$ $= \tr(AX''(X')^tJ)$ we obtain $\mu(X',X'')(A) = \frac{1}{2}\tr(A(X'(X'')^t + X''(X')^t)J)$. As the argument of the trace is an element of $\liesp_{2n}$, identifying $\liesp_{2n}^*$ and $\liesp_{2n}$ via $\liesp_{2n}^* \longrightarrow \liesp_{2n}, \, (B \mapsto \tr(AB)) \longmapsto \frac{1}{2}(A + JA^tJ)$ leads to the modified moment map
\vspace{-1ex}
$$\mu: (\kxC^{2n})^{\oplus 2m} \rightarrow \liesp_{2n},\; (X',X'') \mapsto \frac{1}{2}(X'(X'')^t + X''(X')^t)J.$$
At last $XQX^t = (X',X'')\begin{pmatrix} 0 & I_m \\ I_m & 0\end{pmatrix} \begin{pmatrix} (X')^t \\ (X'')^t \end{pmatrix} = X'(X'')^t + X''(X')^t$.
\end{proof}

\begin{ex}
\label{exsl1}
As we want to consider the representations $\kxC^{m}$ of $Sl_2$ from Schwarz' table, we have to set $n = 1$ and $m = 1, 2, 3$. 
We are only interested in the ideal defined by the moment map rather than in the map itself, so we consider the slightly modified map $\widetilde{\mu}\colon (\kxC^2)^{\oplus 2m} \to \liesl_2$, $X \mapsto XQX^t$. 
This only changes the arrangement of the equations and avoids a cumbersome scalar factor. 

If $m = 1$, we have $XQX^t \hspace{-0.5ex}=\hspace{-0.5ex} \left(\begin{smallmatrix} 2x_{11}x_{12} & x_{11}x_{22}\pl x_{12}x_{21}\\x_{11}x_{22}\pl x_{12}x_{21} & 2x_{21}x_{22} \end{smallmatrix}\right)$.
Thus the momentum ideal is determined to be $\II_{\mu} = (x_{11}x_{12},\;\, x_{11}x_{22}+x_{12}x_{21},\;\, x_{21}x_{22})$.

Calculating the momentum ideal in the same way for $m = 2$ we obtain\hfill\par
$ \II_{\mu} = (x_{11}x_{13}\pl x_{12}x_{14},\;\, x_{11}x_{23}\pl x_{12}x_{24}+x_{13}x_{21}\pl x_{14}x_{22},\;\, x_{21}x_{23}\pl x_{22}x_{24}).$

If $m = 3$, the calculation yields that $\II_{\mu}$ is generated by $x_{11}x_{14}+x_{12}x_{15}+x_{13}x_{16}$, $x_{11}x_{24}+x_{12}x_{25}+x_{13}x_{26}+x_{14}x_{21}+x_{15}x_{22}+x_{16}x_{23}$ and $x_{21}x_{24}+x_{22}x_{25}+x_{23}x_{26}$.
\end{ex}


Before computing the quotient we examine if $\mu^{-1}(0)$ is normal according to proposition \ref{complint}.
We will prove that $\mu^{-1}(0)$ is a reduced resp. normal complete intersection if $m \geq 2n$ resp. $m \geq 2n + 1$; this is theorem \ref{thmnormal}. In the other cases the statement is not necessarily true: taking a look at example \ref{exsl1} where $m = 1$, we see $\det X =  x_{11}x_{22}-x_{12}x_{21} \not\in \II_{\mu}$, but $(\det X)^2 = (x_{11}x_{22}+x_{12}x_{21})^2- 4(x_{11}x_{12})(x_{21}x_{22}) \in \II_{\mu}$. Thus $\II_{\mu}$ is not a radical ideal and $\mu^{-1}(0)$ is not reduced, so it cannot be normal. Similarly $\mu^{-1}(0)$ is not normal if $n = 1$, $m = 2$. This time, $\mu^{-1}(0)$ is a reduced complete intersection according to theorem \ref{thmnormal}, but Jacobi's criterion shows that it is not regular in codimension $1$, so Serre's normality criterion fails. 
\vspace{1em}

To prove theorem \ref{thmnormal} we write $U$ for the symplectic vector space $\kxC^{2n}$ and $W$ for the euclidean vector space $\kxC^{2m}$, the euclidean structure being defined by $Q$. Then the moment map is
$$\mu\colon U \otimes W \to \liesp_{2n} \xrightarrow{\cong} \Sym_{2n}(\kxC),\,X \mapsto XQX^tJ \mapsto XQX^t.$$
The zero fibre $\mu^{-1}(0)$ is a complete intersection if and only if it is of maximal codimension, i.e. of dimension
$$\dim \mu^{-1}(0) = \dim U \otimes W - \dim \liesp_{2n} = 2n\cdot 2m - \binom{2n+1}{2} = 4mn-2n^2-n.$$
In addition $\mu^{-1}(0)$ is reduced if the codimension of its singular locus is at least $1$, and normal if it is at least $2$. Kaledin, Lehn and Sorger give a criterion on the isotropy group to check this:

\begin{lem}[\cite{kls:2006}]
Let $\mu\colon V \to \lieg^*$ be the moment map of an action $G \acts V$ and $N := \{x \in \mu^{-1}(0) \mid G_x \text{ not finite}\}$. Let $\ell := \dim V - \dim \lieg$ be the expected dimension of the zero fibre.
\begin{enumerate}
\item If $\dim N \leq \ell - 1$, then $\mu^{-1}(0)$ is a reduced complete intersection of dimension $\ell$,
\item if $\dim N \leq \ell - 2$, then $\mu^{-1}(0)$ is normal.
\end{enumerate}
\end{lem}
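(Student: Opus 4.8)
The plan is to reduce everything to two standard facts: that a hypersurface-type bound on the dimension of the zero fibre forces the complete intersection property, and that the non-reduced or non-normal locus of such a complete intersection is controlled by where the moment map fails to be a submersion. First I would observe that $\mu^{-1}(0)$ always has every component of dimension at least $\ell = \dim V - \dim\lieg$, since it is cut out by $\dim\lieg$ equations in $V$; hence it is a complete intersection of pure dimension $\ell$ precisely when $\dim\mu^{-1}(0) \le \ell$, and in that case it is automatically Cohen–Macaulay. So the first step is to show $\dim\mu^{-1}(0) \le \ell$ under the hypothesis $\dim N \le \ell - 1$.

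The key point for that step is the identification of the smooth locus of the map $\mu$ with the complement of $N$. Concretely, $d\mu_x$ has maximal rank (equal to $\dim\lieg$) exactly when the only $A \in \lieg$ with $\sigma(Ax,\cdot)\equiv 0$ on all of $V$ is $A$ acting trivially, i.e.\ when $x$ has finite stabiliser; this is the same computation as in the proof of Proposition \ref{satzsymplStr}, where $T_x\mu^{-1}(0) = (\lieg x)^\perp$ and the corank of $d\mu_x$ equals $\dim\lieg_x$. Thus on $\mu^{-1}(0)\setminus N$ the map $\mu$ is a submersion onto a neighbourhood of $0$, so that locus is smooth of pure dimension exactly $\ell$, and in particular $\overline{\mu^{-1}(0)\setminus N}$ has dimension $\ell$. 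On the other hand $N$ itself has dimension $\le \ell - 1$ by hypothesis. Therefore $\dim\mu^{-1}(0) = \max\{\ell,\ \dim N\} = \ell$, which gives (1): $\mu^{-1}(0)$ is a complete intersection of dimension $\ell$, hence Cohen–Macaulay, and since its singular locus is contained in $N$ and has codimension $\ge 1$ it is generically reduced; a generically reduced Cohen–Macaulay (hence $S_1$) scheme is reduced.

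For (2), with the stronger bound $\dim N \le \ell - 2$, I would invoke Serre's criterion: $\mu^{-1}(0)$ is $S_2$ since it is Cohen–Macaulay, and it is $R_1$ because its singular locus lies in $N\cap\mu^{-1}(0)$, whose codimension in $\mu^{-1}(0)$ is at least $\ell - (\ell-2) = 2$. Hence $\mu^{-1}(0)$ is normal. The main obstacle is the precise argument that $\mathrm{Sing}(\mu^{-1}(0)) \subseteq N$: one must check that at a point $x$ with finite stabiliser the complete intersection $\mu^{-1}(0)$ is actually smooth — which follows because there $d\mu_x$ is surjective, so $0$ is a regular value and the fibre is smooth at $x$ of the expected dimension. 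Everything else is the bookkeeping of dimensions above together with the standard Cohen–Macaulay and Serre arguments.
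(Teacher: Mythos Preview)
The paper does not prove this lemma at all: it is quoted from \cite{kls:2006} and used as a black box, so there is no proof in the text to compare against. Your argument is correct and is essentially the standard one (and the one in the cited source): identify the locus where $d\mu_x$ fails to be surjective with $N$ via $\mathrm{coker}\,d\mu_x \cong \lieg_x^*$ (using non-degeneracy of $\sigma$ and the defining identity $d\mu_x(\xi)(A)=\sigma(\xi,Ax)$), deduce the dimension bound and the complete intersection / Cohen--Macaulay property, and then apply $R_0+S_1$ for reducedness and Serre's $R_1+S_2$ for normality.
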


In the general case $U = \kxC^d$, $W = \kxC^k$ of arbitrary $d$ and $k$ and the above moment map we prove the following proposition. Together with the lemma this will give the desired result if we set $d = 2n$, $k = 2m$.

\begin{prop}
\label{spnormalreduziert}
For every $k \geq 2d$ we have
\begin{enumerate}
\item $\dim \mu^{-1}(0) = kd-\binom{d+1}{2} = kd - \frac{d^2}{2} - \frac{d}{2}$,
\item $\dim N \leq kd-\binom{d+1}{2}-1$.
If $k \geq 2d+1$ we even have $\dim N \leq kd-\binom{d+1}{2}-2$.
\end{enumerate}
\end{prop}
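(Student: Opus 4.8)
The plan is to work with the map $\mu \colon U \otimes W \to \Sym_d(\kxC)$, $X \mapsto XQX^t$, where $U = \kxC^d$ and $W = \kxC^k$ carries the nondegenerate symmetric form $Q$. Identifying $U \otimes W$ with the space $\Mat_{d \times k}(\kxC)$ of matrices, a point $X$ lies in $\mu^{-1}(0)$ exactly when the $d$ row vectors of $X$, viewed as elements of $(W, Q)$, span a \emph{totally isotropic} subspace. Since $Q$ has maximal Witt index on $\kxC^k$, isotropic subspaces have dimension at most $\lfloor k/2 \rfloor$; for $k \geq 2d$ this is $\geq d$, so generically the rows are linearly independent and span an isotropic $d$-plane. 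The first step is therefore to stratify $\mu^{-1}(0)$ by the rank $r = \rk X$ (equivalently the dimension of the isotropic span), and to compute the dimension of each stratum $S_r$ by fibering the map $X \mapsto (\text{row space of } X)$ over the isotropic Grassmannian $IG(r, W)$.

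\textbf{Computing the stratum dimensions.} For fixed $r$, the variety of isotropic $r$-planes in $(\kxC^k, Q)$ has dimension $rk - \tfrac{3}{2}r^2 + \tfrac{1}{2}r$ (the standard formula $r(k-r) - \binom{r}{2}$ for an isotropic Grassmannian of a split form). Over a fixed isotropic $r$-plane $L$, the matrices $X$ with row space contained in $L$ and rank exactly $r$ form a dense open subset of $\Hom(\kxC^d, L) \cong \kxC^{dr}$. Hence $\dim S_r = dr + rk - \tfrac{3}{2}r^2 + \tfrac{1}{2}r$. The next step is to check that this is maximized at $r = d$: treating the expression as a quadratic in $r$ and using $k \geq 2d$, one sees the function is increasing on $[0, d]$, so the top stratum $S_d$ (the locus where the rows are independent) dominates and $\dim \mu^{-1}(0) = dd + dk - \tfrac{3}{2}d^2 + \tfrac{1}{2}d = kd - \binom{d+1}{2}$, giving part (1). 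One must also confirm $S_d$ is irreducible of this dimension so that $\mu^{-1}(0)$ genuinely has this dimension and $\overline{S_d} = \mu^{-1}(0)$ is a complete intersection.

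\textbf{Bounding $N$.} For part (2) the task is to identify $N = \{X \in \mu^{-1}(0) : (Sp_d)_X \text{ not finite}\}$ — by the lemma of Kaledin--Lehn--Sorger it suffices to bound $\dim N$. The key observation is that the stabilizer of $X$ in $Sp_d$ is finite precisely when $X$, as a map $\kxC^k \to \kxC^d$... more usefully, when $\rk X = d$ (so $Sp_d$ acts with finite, indeed trivial, stabilizer on full-rank configurations spanning all of $U$): if $\rk X < d$ the image of $X^t$ (a subspace of $U$ of dimension $< d$) has a nontrivial pointwise stabilizer in $Sp_d$, since an isotropic-or-not proper subspace of a symplectic space always has positive-dimensional stabilizer. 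Therefore $N \subseteq \bigcup_{r < d} S_r$, and $\dim N \leq \max_{r < d} \dim S_r = \dim S_{d-1} = d(d-1) + (d-1)k - \tfrac{3}{2}(d-1)^2 + \tfrac{1}{2}(d-1)$. The plan is then to subtract this from $\dim \mu^{-1}(0) = kd - \binom{d+1}{2}$ and show the gap is $\geq 1$ when $k \geq 2d$ and $\geq 2$ when $k \geq 2d+1$; this is an elementary computation in which the gap comes out to roughly $k - 2d + (\text{const})$.

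\textbf{Expected main obstacle.} The routine dimension arithmetic is not the difficulty; the delicate point is justifying that the stratification by rank is well-behaved — that each $S_r$ is locally closed, irreducible, and has exactly the claimed dimension — and in particular that the generic point of $\mu^{-1}(0)$ really has rank $d$ rather than something smaller, which uses crucially that $Q$ is split of maximal Witt index so that isotropic $d$-planes exist and form an irreducible family. A second subtle point is the precise claim $N = \mu^{-1}(0)_{r<d}$: I would need to verify carefully that a full-rank $X$ has finite (indeed trivial) $Sp_d$-stabilizer — this follows because such an $X$ contains a basis of $U$ among combinations of its columns, and only the identity of $Sp_d$ fixes a basis — and conversely that $\rk X < d$ forces infinite stabilizer. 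If the inclusion $N \subseteq \{r < d\}$ turned out to be strict in an awkward way, the bound would still hold since we only need an upper bound on $\dim N$, so this direction is the one that matters and it is the safe one.
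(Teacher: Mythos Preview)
Your approach is essentially the paper's: both interpret $\mu^{-1}(0)$ as the locus of $d\times k$ matrices whose rows span an isotropic subspace of $(W,Q)$, compute the top dimension via the isotropic Grassmannian together with the $d^2$-dimensional fibre of linear maps, and bound $N$ by the rank-deficient locus $\{\rk X<d\}$. The paper packages the last step as an induction on $d$ (a rank-deficient $\varphi$ factors through $U/\kxC u_0$ and one invokes the result for $d-1$), while you stratify directly by $r=\rk X$; the resulting numerical bound on $\dim N$ is identical, namely $(d-1)k-\binom{d}{2}+(d-1)$, whose gap to $dk-\binom{d+1}{2}$ is exactly $k-2d+1$.

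There is one slip to fix: the dimension of the \emph{orthogonal} isotropic Grassmannian is
\[
\dim OG(r,k)=r(k-r)-\binom{r+1}{2}=rk-\tfrac{3}{2}r^2-\tfrac{1}{2}r,
\]
not $r(k-r)-\binom{r}{2}$ (the latter is the symplectic formula). With your sign the middle expression $d^2+dk-\tfrac{3}{2}d^2+\tfrac{1}{2}d$ actually equals $dk-\binom{d}{2}$, not $dk-\binom{d+1}{2}$; you have written the correct final answer but it does not follow from the formula you stated. Once you replace $+\tfrac{1}{2}r$ by $-\tfrac{1}{2}r$ throughout, all of your computations go through and agree with the paper's, including the monotonicity check showing $S_{d-1}$ dominates the lower strata.
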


\begin{rem} In our case the definition of $Q$ requires $k$ to be even, but the proof works for arbitrary $k$ if $Q$ is replaced by any orthogonal structure. Similarly, as the moment map and the set $N$ are defined via the action $Sp(U) \acts U \otimes W$, a priori both only exist for even $d$. But the map $\mu\colon U \otimes W \to \Sym_d(\kxC)$, $X \mapsto XQX^t$ is also defined for odd $d$. 
Using $U\cong U^*$ we can write $U\otimes W \cong \Hom(U,W)$. Let $<u_1,\hdots,u_d>$ be a basis of $U$ and $<w_1,\hdots,w_k>$ an orthonormal basis of $W$ so that an element $X \in U\otimes W$ is mapped to some $\varphi\colon U \to W$ with corresponding matrix $X^t$ under this identification. Then
\begin{align*}
X \in \mu^{-1}(0) &\Longleftrightarrow XQX^t = 0 \Longleftrightarrow x_iQx_j^t = 0 \text{ for all rows } x_i,\, x_j \text{ of } X \\
&\Longleftrightarrow \varphi(u_i) \perp \varphi(u_j) = 0 \text{ for all basis vectors } u_i,\, u_j \text{ of } U \\
& \qquad \text{ w.r.t. the euclidean structure on } W \\
&\Longleftrightarrow \varphi(U) \subset W \text{ is an isotropic subspace of } W.
\end{align*}
This gives $\mu^{-1}(0) = \{\varphi \in \Hom(U,W) \mid \varphi(U) \subset W \text{ isotropic}\}$.

The set $N = \{X \in \mu^{-1}(0) \mid |Sp(U)_X| = \infty\}$  of elements with infinite isotropy group is contained in the set $M:= \{X \in \mu^{-1}(0) \mid Sp(U)_X \neq 1\}$ of elements with non-trivial isotropy group, so it suffices to restrict the dimension of $M$.\par
Let $g \in Sp(U)$. As $Sp(U)$ acts trivially on $W$ we have $\Hom(U,W)^g = \Hom(U^g,W)$. Thus $\varphi$ is invariant under $g$ if and only if $\varphi$ remains fixed under composition with the map $g\colon U \to U$. This leads to the following criterion for the isotropy group of $\varphi$ to be non-trivial:
\begin{align*}
Sp(U)_{\varphi} \neq 1 &\Longleftrightarrow \exists\; 1 \neq g \in Sp(U): \varphi \circ g = \varphi \nonumber\\
&\Longleftrightarrow \exists\;1 \neq g \in Sp(U):\;\forall \; u \in U: \varphi(g(u)) = \varphi(u).
\end{align*}
Let $g$ be fixed with this property and choose $u \in U$ such that $g(u) \neq u$. Then we have $\varphi(u \mi g(u)) = \varphi(u) \mi \varphi(g(u)) = 0$, i.e. $0 \neq u \mi g(u) =: u_0 \in \ker \varphi$. In particular the kernel contains a line $\kxC u_0$ and $\varphi$ factorises via $U/\kxC u_0$. This means $N \subset M \subset L$ where $L:= \{\varphi \in \mu^{-1}(0) \mid \ker \varphi \neq 0\} = \{\varphi \in \mu^{-1}(0) \mid \exists\; u_0: \varphi\colon U/\kxC u_0 \to W\}$, and it suffices to restrict the dimension of $L$, which is independent of the structure of the symplectic group. Therefore $L$ also exists for odd $d$.
\end{rem}

\begin{proof}
Let $d = 1$. Then by assumption $k \geq 2$. We have to show $\dim \mu^{-1}(0) = k - 1$, $\dim L \leq k - 2$ for every $k$ and $\dim L \leq k - 3$ if $k \geq 3$, shortly $\dim L = 0$.\par
Let $U = <u>$ and $<w_1,\hdots,w_k>$ an orthonormal basis of $W$. Let $\varphi$ be defined by $u \mapsto \sum_{i=1}^k a_iw_i$. Then $\varphi(U)$ is isotropic if and only if $\sum_{i=1}^k a_i^2 = 0$. Thus we can choose $k-1$ coefficients independently, the last one is fixed up to sign. In particular $\dim \mu^{-1}(0) = k - 1$. 
If the kernel of $\varphi$ is non-trivial we have $\ker \varphi = U$ and $\varphi\colon \{0\} \to W$. Thus $L$ has dimension $0$.\par
Now consider a $d$--dimensional space $U$ together with its isotropic images $\varphi(U)$ of dimension $d$. Every $\varphi \in \mu^{-1}(0)$ maps to such a subspace; if the image is not of dimension $d$, complete it to a $d$--dimensional isotropic space. Thus $\varphi(U)$ varies in the Grassmannian of $d$--dimensional isotropic subspaces in the euclidean space $W$ of dimension $k$. The set of such subspaces has dimension $\dim (\Grass_{iso}(d,W)) = d(k - \frac{3}{2}d-\frac{1}{2})$. There are $d^2$ maps $\varphi \in \Hom(U,W)$ to every isotropic subspace of dimension $d$. Thus the zero fibre of the moment map is at most of dimension
\begin{align*}
\dim \mu^{-1}(0) &\leq d(k-\frac{3}{2}d-\frac{1}{2}) + d^2 =  dk - \binom{d+1}{2}.
\end{align*}
On the other hand, as every equation reduces the dimension at most by one according to Krull's theorem and since being isotropic is described by $\binom{d+1}{2}$ equations for a $d$--dimensional euclidean space, we obtain equality.

Now if $\varphi \in L$ factorises via $U/\kxC u_0$ for some $u_0 \in \ker \varphi$, then if $\varphi(U)$ is isotropic so is $\varphi(U/\kxC u_0)$. Thus $\varphi$ is contained in $\mu_0^{-1}(0)$ where $\mu_0\colon (U/\kxC u_0)\otimes W \to \Sym_{d-1}(\kxC)$, $\overline{X} \mapsto \overline{X}Q\overline{X}^t$ denotes the moment map in smaller dimension. Using the induction hypothesis we know $\dim(\mu_0^{-1}(0)) = (d \mi 1)k - \binom{d}{2}.$
There are $\dim \prP(U) = d - 1$ different subspaces $\kxC u_0$ of $U$. This yields
\begin{align*}
\dim L &\leq (d-1)k - \binom{d}{2} + (d-1) = dk - \binom{d+1}{2}-(k-2d+1)\\
&\leq\left\{\begin{array}{ll} dk - \binom{d+1}{2}-1 & \text{if } k \geq 2d, \\ dk - \binom{d+1}{2}-2 & \text{if } k \geq 2d + 1.\end{array}\right. \qedhere
\end{align*}
\vspace{-1em}
\end{proof}

Setting $d = 2n$, $k = 2m$ we deduce normality in the case $m \geq 2n + 1$. This completes the proof of theorem \ref{thmnormal}.
\vspace{1ex}

Looking at Schwarz' list we see that we have to reduce ourselves to the case $m \leq 2n + 1$. Let us first compute $(\kxC^{2n})^{\oplus 2m}\red Sp_{2n}$ by describing the invariants and relations. Then we turn to the symplectic reduction $\mu^{-1}(0)\red Sp_{2n}$. By the preceding theorem we can only guarantee symplecticity in the case $m = 2n + 1$.
\vspace{1em}

According to the first fundamental theorem for $Sp_{2n}$ (cf. \cite{wey:1946}) all invariants of our action are
\vspace{-0.5em}
$$ z_{ij} := (x^{(i)})^tJx^{(j)},\quad i < j, \quad i,j \in \{ 1, \hdots, s \}, $$
where $s = m$ for the simple action and $s = 2m$ for the duplicated one.
As matrices the invariants can be arranged as $(X')^tJX'$ resp. $X^tJX$, where you find the $z_{ij}$ above the diagonal and their negatives below, the diagonal is zero. We will often use the permutation $X^tJXQ$.
By the second fundamental theorem the $\binom{s}{2}$ invariants do not have any relations for $s \leq 2n + 1$, these are exactly the cases in Schwarz' table. For larger $s$ there are the relations $J_k = 0, \, k = 1, \hdots, n+1$, where
$J_k = \sum_{\pi \in S_{2n+1}} \sgn \pi \; z_{\pi(i_0)j_0} \cdots z_{\pi(i_{2k-2})j_{2k-2}} z_{\pi(i_{2k-1})\pi(i_{2k})} \cdots z_{\pi(i_{2n-1})\pi(i_{2n})}$ 
for  $i_k$ and $j_l \in \{ 1, \hdots, 2m \}$ pairwise disjoint.

In our example $n = 1$, the invariants specialise to $z_{ij} = x_{1i}x_{2j} - x_{1j}x_{2i}$, $i<j$. These are exactly the $2 \times 2$--minors of $X$.
So $\varrho_1^1$ does not have any invariant, $\varrho_2^1$ has exactly one, namely the determinant of $X$, and there are three invariants for $\varrho_3^1$. Of course, these invariants of coregular representations do not fulfill any relation.
Unlike this, the six invariants of $\varrho_4^1$ are connected by one Plücker relation $J_1 = z_{12}z_{34} - z_{13}z_{24} + z_{14}z_{23}$, and the fifteen invariants of $\varrho_6^1$ satisfy fifteen of such Plücker relations, namely $z_{ij}z_{kl} - z_{ik}z_{jl} + z_{il}z_{jk}$, $1 \leq i < j < k < l \leq 6.$
In this last case there is one further relation \par
$J_2 = z_{14}z_{25}z_{36} + z_{24}z_{35}z_{16} + z_{34}z_{15}z_{26} - z_{14}z_{35}z_{26} - z_{24}z_{15}z_{36} - z_{34}z_{25}z_{16}.$
\vspace{1ex}

Now let us compute $\mu^{-1}(0)\red Sp_{2n}$ for arbritrary $m$ and $n$:

\begin{prop}
\label{SpQuot}
As a set the quotient of the action $Sp_{2n} \acts (\kxC^{2n})^{\oplus 2m}$ is
$$ \mu^{-1}(0) \red Sp_{2n} \cong Z := \{ A \in \lieso_{2m} \mid A^2 = 0,\,\rk A \leq \min\{2n,m\} \}. $$
\end{prop}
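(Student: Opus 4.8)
The plan is to identify the invariant-theoretic quotient $\mu^{-1}(0)\red Sp_{2n} = \Spec(\kxC[(\kxC^{2n})^{\oplus 2m}]/\II_\mu)^{Sp_{2n}}$ explicitly as a set, using the first fundamental theorem for $Sp_{2n}$. By FFT the ring of invariants $\kxC[(\kxC^{2n})^{\oplus 2m}]^{Sp_{2n}}$ is generated by the functions $z_{ij} = (x^{(i)})^t J x^{(j)}$, which I arrange into the skew-symmetric $2m\times 2m$ matrix $S := X^t J X \in \lieso_{2m}$. The moment map equation $XQX^t = 0$ (working with the modified $\widetilde\mu$, which cuts out the same variety) is a condition on the ``other side'' of $X$; I need to translate it into a condition on $S$. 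The natural candidate is $S Q S = X^t J X Q X^t J X = X^t J (XQX^t) J X = 0$, so that already $\mu^{-1}(0)$ maps into $\{S \in \lieso_{2m} : SQSQ = 0\}$ — equivalently, setting $A := SQ$, into $\{A : A^2 = 0\}$ with $A^t = QS^tQ\cdot\text{(sign)}$; one checks $A = SQ$ satisfies $A^tQ = -QA$, i.e. $A\in\lieso_{2m}$ with respect to $Q$. The rank bound $\rk A = \rk S \le \rk X \le \min\{2n, 2m\}$ is immediate, but I must sharpen $\rk X \le 2n$ (the size of $U$) and separately argue $\rk S \le m$ — the latter because $S Q S = 0$ forces $\im(QS) \subseteq \ker(S)$, and for a skew form $S$ of rank $r$ on a $2m$-space the radical has dimension $2m - r$, giving $r \le 2m - r$, i.e. $r \le m$. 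So $\rk A \le \min\{2n, m\}$ as claimed.

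**The key steps.** First I would set up the map $\Phi\colon X \mapsto A := X^t J X Q$ and verify it is $Sp_{2n}$-invariant and lands in $Z$ whenever $X \in \mu^{-1}(0)$, using the matrix identity above; this shows $\Phi$ factors through a morphism $\overline\Phi\colon \mu^{-1}(0)\red Sp_{2n} \to Z$. Second, since the quotient map is surjective on $\kxC$-points and the $z_{ij}$ generate the invariants, $\overline\Phi$ is injective on points: two points of $(\kxC^{2n})^{\oplus 2m}$ have the same image in the categorical quotient iff all invariants agree iff $S$ (hence $A$) agrees. Third — the surjectivity — I would take an arbitrary $A \in Z$, recover a skew form $S = AQ^{-1} = AQ$ of rank $r \le \min\{2n,m\}$, and construct an $X \in (\kxC^{2n})^{\oplus 2m}$ with $X^t J X = S$ and $XQX^t = 0$. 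Concretely: $S$ of rank $r$ can be written $S = Y^t J_0 Y$ for some $r\times 2m$ matrix $Y$ and a standard rank-$r$ skew form $J_0$ (a symplectic form on $\kxC^r$ if $r$ even; handle parity, but note $r = \rk S$ is automatically even since $S$ is skew); then embed $\kxC^r \hookrightarrow \kxC^{2n}$ as a symplectic (or isotropic-plus-dual) subspace and pad $Y$ by zero rows to get $X^t$, i.e. $X \in \Hom(\kxC^{2m}, \kxC^{2n})$ (transpose conventions to be pinned down). One then checks $XQX^t = 0$: this is exactly the condition that the columns of $X$ span an isotropic-type configuration, which I would derive from $A^2 = 0$ via the remark in the paper identifying $\mu^{-1}(0)$ with $\{\varphi : \varphi(U)\text{ isotropic}\}$ — turning $A^2 = 0$ into the statement that a suitable image subspace is isotropic for $Q$.

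**The main obstacle.** The surjectivity step is the crux: given $A$ with $A^2 = 0$ and the rank bound, I must exhibit a genuine preimage $X$ in $\mu^{-1}(0)$, not merely match invariants formally. The difficulty is simultaneously realizing the two bilinear conditions — $X^t J X = AQ$ (fixing the invariants) \emph{and} $XQX^t = 0$ (membership in the zero fibre) — for the \emph{same} matrix $X$, and checking the rank of $A$ is exactly what makes this possible (if $\rk A > \min\{2n,m\}$ no such $X$ exists, so the rank condition is not decorative). I expect to argue this by first reducing, via the $Sp_{2n}$-action and the action of the relevant orthogonal/rank-preserving group on $\lieso_{2m}$, to a normal form for $A$ — a single Jordan-type block structure with $A^2 = 0$ is essentially $\left(\begin{smallmatrix} 0 & B \\ 0 & 0\end{smallmatrix}\right)$ up to conjugation — and then writing down $X$ in that normal form explicitly, where both identities become transparent. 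A secondary point to be careful about: the categorical quotient is a priori only a set-theoretic statement here (the proposition says ``as a set''), so I need not worry about the scheme structure or reducedness of $\mu^{-1}(0)$ at this stage — that is handled by Theorem \ref{thmnormal} separately — which simplifies the argument to a bijection-of-points claim plus the observation that $Z$ is the image of a morphism from the quotient.
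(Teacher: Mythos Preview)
Your proposal matches the paper's proof closely: both define $\nu(X) = X^tJXQ$, verify it lands in $Z$ (with the rank bound $\le m$ coming from $A^2 = 0$), invoke the first fundamental theorem for injectivity of the induced map on the quotient, and prove surjectivity by factoring the skew form $AQ = R^tJ_{2k}R$ and padding into $\kxC^{2n}$. The one place where the paper is sharper than your sketch is precisely your ``main obstacle'': rather than appealing to the isotropic-image description or passing to a Jordan normal form for $A$, the paper observes that the explicitly constructed matrix $T$ has full row rank, so from $A^2 = (T^tJ_{2k}TQ)^2 = T^tJ_{2k}\cdot (TQT^tJ_{2k})\cdot TQ = 0$ one cancels the injective left factor $T^tJ_{2k}$ and the surjective right factor $TQ$ to get $TQT^tJ_{2k} = 0$, hence $XQX^tJ = 0$, directly.
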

\begin{proof} 
By proposition \ref{SpImpuls} the preimage of zero under the moment map is
\begin{equation*} \mu^{-1}(0) = \{ X \in (\kxC^{2n})^{\oplus 2m} \mid XQX^tJ = 0\}. \end{equation*}
Define the map $\nu: \mu^{-1}(0) \to \lieso_{2m}$, $X \mapsto X^tJXQ$. At first we show that the image of $\nu$ is contained in $Z$. For this choose an arbitrary $X \in \mu^{-1}(0)$:
\begin{itemize}
\item $-Q(X^tJXQ)^tQ = -QQ^{-1}X^t(-J)XQ = X^tJXQ$, i.e. $X^tJXQ \in \lieso_{2m}$,
\item $(X^tJXQ)^2 = X^tJ(XQX^tJ)XQ = 0$,
\item $\rk(X^tJXQ) < m$ holds because $(X^tJXQ)^2 = 0$. The other bound arises from $\rk(X^tJXQ) \leq \min\{\rk X, \rk J, \rk Q\} \leq \min\{2n,2m\} \leq 2n$.
\end{itemize}
To see that $\nu$ factorises via $\mu^{-1}(0) \red Sp_{2n}$ we have to show that $\nu$ is constant on each orbit. Indeed for every $g \in Sp_{2n}$: 
$\nu(gX) = (gX)^tJgXQ  = X^tJXQ = \nu(X)$.

For the injectivity of the induced map $\overline{\nu}$ on the quotient we look at the following  diagram describing the correspondence between algebras and varieties. One obtains the second diagram from the first one by taking the spectrum of each ring and by reversing all the arrows. Furthermore we utilise $\im \nu \subset Z$. 
We write $V := (\kxC^{2n})^{\oplus 2m}$ for short.
\begin{equation*} 
\begin{xy} 
\xymatrix{
\kxC[V] \ar@/_9mm/@{<-}[dd] \ar@{<-_)}[d] \ar@{->>}[r] & \kxC[V]/\II_{\mu} \ar@{<-_)}[d] \\
\kxC[V]^{Sp_{2n}} \ar@{<<-}[d]^{\varphi} \ar@{->>}[r] & (\kxC[V]/\II_{\mu})^{Sp_{2n}}\\
\kxC[z_{ij}] \ar@{->>}^{\beta}[ur] &  
}\end{xy}
\qquad
\begin{xy} 
\xymatrix{
V \ar@/_9mm/@{->}[dd]_{\psi} \ar@{->>}[d] \ar@{<-^)}[r] & \mu^{-1}(0) \ar@{->>}[d] \ar@/^11mm/@{->}[dd]^{\nu} \\
V \red Sp_{2n} \ar@{_(->}[d]_{\overline{\psi}} \ar@{<-^)}[r] & \mu^{-1}(0) \red Sp_{2n} \ar@{_(->}[d]_{\overline{\nu}}\\
\kxC^{\binom{2m}{2}} = \lieso_{2m} \ar@{<-^)}[r] & Z }
\end{xy} 
\end{equation*}
As map of invariants $\varphi:\kxC[z_{ij}] \to \kxC[V]^{Sp_{2n}},\, z_{ij} \mapsto (x^{(i)})^tJx^{(j)}$, is surjective, for which reason the corresponding map of varieties $\overline{\psi}:V\red Sp_{2n} \to \lieso_{2m}$ is injective. Here $\overline{\psi}$ is induced by $\psi:V \to \lieso_{2m},\, X \mapsto X^tJXQ$, and we have $\psi|_{\mu^{-1}(0)} = \nu$. As $\overline{\psi}$ is injective so is $\overline{\nu} = \overline{\psi}|_{\mu^{-1}(0)\red Sp_{2n}}: \mu^{-1}(0)\red Sp_{2n} \to Z$. \par

For showing surjectivity choose $A \in \lieso_{2m}$ satisfying $A^2 = 0$ and $\rk A \leq \min\{2n,m\}$. We will construct an $X \in  \mu^{-1}(0)$ such that $A = X^tJXQ$.

As $AQ$ is skew-symmetric there exists $S \in Gl_{2m}$ such that $AQ$ obtains the shape
$$ (S ^{-1})^tAQS ^{-1} = \begin{pmatrix} \widetilde A & 0 \\ 0 & 0 \end{pmatrix}, \text{ i.e. } A = S^t \begin{pmatrix} \widetilde A & 0 \\ 0 & 0 \end{pmatrix} SQ,$$
where $\widetilde A$ is regular of size $\ell := \rk A$ and skew-symmetric. But the size of a non-degenerate skew-symmetric matrix is even, so $\ell = 2k$, and further there exists an $R \in Gl_{2k}$ such that $\widetilde{A} = R^tJ_{2k}R.$
Inserting this in the above expressions and writing $\widetilde R := (R,\,0) \in \Mat_{2k \times 2m}$ and $T := \widetilde R S$, we obtain
$$AQ = S^t \begin{pmatrix} R^tJ_{2k}R & 0 \\ 0 & 0 \end{pmatrix} S = S^t \widetilde{R}^t J_{2k} \widetilde R S = T^tJ_{2k}T.$$
If $k = n$, then $X := T$ is the desired preimage of $A$. Otherwise $k < n$ and we define $U \in \Mat_{2n \times 2k}$ via
$U := \frac{1}{\sqrt{2}}\begin{pmatrix} \binom{I_k}{0} & \binom{-I_k}{0} \\ \binom{I_k}{0} & \binom{I_k}{0} \end{pmatrix}$. One easily computes $U^tJ_{2n}U = J_{2k}$. 
Setting $X := UT$ finally yields $AQ = T^tJ_{2k}T = T^tU^tJ_{2n}UT = X^tJ_{2n}X$.

It remains to prove $X \in \mu^{-1}(0)$. As  $A^2 = 0$ we have $X^tJXQX^tJXQ = 0$. By construction $X = (R,\,0)S$ resp. $X = U(R,\,0)S$ is regular on rows, thus so is $XQ$, and $X^tJ$ is regular on columns. This shows that if $XQX^tJ$ was different from zero then $XQX^tJ \cdot XQ$ and $X^tJ \cdot XQX^tJXQ$ would also be, a contradiction. Therefore $XQX^tJ = 0$ must hold and $X$ is contained in $\mu^{-1}(0)$.
\end{proof}


Until now we know the quotient only as a set. Our next aim is to describe its geometric structure.\par
If $\mu^{-1}(0)$ is reduced, the corresponding ideal is its own radical, but the defining equations for $Z$ do not yield a radical ideal in general. So for $\mu^{-1}(0)\red Sp_{2n}$ and $Z$ to be equal as varieties we have to equip $Z$ with its reduced structure, which will be assumed in the following. In the examples $n = 1$, $m = 2$ or $3$ the reduced structure is attained by adding some equations involving the Pfaffian as we will see later.
\vspace{1ex}

On $(\kxC^{2n})^{\oplus 2m} = \kxC^{2n} \otimes \kxC^{2m}$ there is not only the $Sp_{2n}$--action from the left, but also an $SO_{2m}$--action, namely multiplication from the right. This action leaves $\mu^{-1}(0)$ invariant, for $(Xg)Q(Xg)^tJ =  X(gQg^t)X^tJ = XQX^tJ$ if $g \in SO_{2m}$. Thus $SO_{2m}$ also acts on $\mu^{-1}(0)$, and on the quotient $\mu^{-1}(0)\red Sp_{2n}$ because the $Sp_{2n}$--action is independent of the $SO_{2m}$--action. 
With one element its whole orbit is contained in $\mu^{-1}(0)\red Sp_{2n}$ by $SO$--invariance. Therefore $Z$ is the union of certain nilpotent orbits of $\lieso_{2m}$, i.e. of orbits of nilpotent elements under the adjoint representation.\par
Let $d_1 \geq \hdots \geq d_k$ be a partition of $2m$. All but one partitions whose even parts occur with even multiplicity correspond to exactly one nilpotent orbit $\mathcal{O}_{[d_1,\hdots,d_k]}$ of $\lieso_{2m}$. Only the very even partitions, which have only even parts $d_i$ with even multiplicity, correspond to two nilpotent orbits $\mathcal{O}^{I}_{[d_1,\hdots,d_k]}$ and $\mathcal{O}^{II}_{[d_1,\hdots,d_k]}$ (cf. \cite{cmg:1993}).  The Jordan form of a representative of this orbit has blocks of size $d_1,\,\hdots,\,d_k$.

\begin{prop}
\label{nilOrb}
The variety $Z$ consists of the following nilpotent orbits:
$$ Z = \left\{ \begin{array}{ll} 
\bigcup\limits_{k=0}^{\frac{m-1}{2}}  \mathcal{O}_{[2^{2k},1^{2(m-2k)}]} = \overline{\mathcal{O}}_{[2^{m-1},1^2]}, & \text{ if } m \text{ is odd,} \vspace{0.5ex}\\
\bigcup\limits_{k=0}^{\frac{m}{2}-1}  \mathcal{O}_{[2^{2k},1^{2(m-2k)}]} \cup \mathcal{O}^{I}_{[2^{m}]} \cup \mathcal{O}^{II}_{[2^{m}]} = \overline{\mathcal{O}}^{I}_{[2^{m}]} \cup \overline{\mathcal{O}}^{II}_{[2^{m}]}, & \text{ if } m \text{ is even.}
\end{array} \right.$$
\end{prop}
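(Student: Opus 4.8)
The starting point is Proposition \ref{SpQuot}, which identifies $Z$ set-theoretically with the set of nilpotent $A \in \lieso_{2m}$ satisfying $A^2 = 0$ and $\rk A \le \min\{2n, m\}$. Since in all the cases relevant to Schwarz' list we have $m \le 2n+1$, the bound $\rk A \le \min\{2n,m\}$ is typically just $\rk A \le m$, and we must work out which nilpotent orbits these two conditions single out. (I will carry the $\min$ along and note at the end that the displayed answer is what one gets; when $m \le 2n$ the relevant bound is $m$.) The key observation is that for a nilpotent $A$ with Jordan type $[d_1, \dots, d_k]$ (a partition of $2m$), the condition $A^2 = 0$ is equivalent to all parts being $\le 2$, i.e. the partition has the form $[2^a, 1^b]$ with $2a + b = 2m$; and then $\rk A = a$ (each $2$-block contributes $1$ to the rank, each $1$-block contributes $0$). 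So $A^2 = 0$ and $\rk A \le m$ forces $a \le m$, which given $2a + b = 2m$, $b \ge 0$ is automatic — hence the rank bound is vacuous among square-zero matrices? No: one must be careful, since $2a \le 2m$ already. The real content is the constraint coming from $\lieso_{2m}$ itself.

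So the main step is: \emph{determine which partitions $[2^a, 1^b]$ of $2m$ actually occur as Jordan types of nilpotent elements of $\lieso_{2m}$.} I would invoke the standard classification (as cited, \cite{cmg:1993}): a partition of $2m$ is the Jordan type of a nilpotent element of $\lieso_{2m}$ if and only if every even part occurs with even multiplicity. For $[2^a, 1^b]$ the only even part is $2$, occurring with multiplicity $a$; hence the partition is admissible iff $a$ is even. Writing $a = 2k$, we get $b = 2m - 4k \ge 0$, i.e. $k$ ranges over $0 \le k \le \lfloor m/2 \rfloor$. This immediately produces the orbits $\mathcal{O}_{[2^{2k}, 1^{2(m-2k)}]}$ listed in the proposition. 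The split into two orbits for very even partitions happens precisely when the partition has only even parts each of even multiplicity: for $[2^{2k}, 1^{2m-4k}]$ this means $b = 2m - 4k = 0$, i.e. $k = m/2$, which can only occur when $m$ is even — this is the source of the $\mathcal{O}^I_{[2^m]} \cup \mathcal{O}^{II}_{[2^m]}$ terms. In all other cases $b > 0$, the partition is not very even, and there is a single orbit.

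It remains to (i) check that $Z$ is closed under the $SO_{2m}$-action and hence is a union of orbit closures (this is already established in the text preceding the proposition, via $(Xg)Q(Xg)^t J = XQX^tJ$), so $Z$ is a closed union of nilpotent orbits, and (ii) identify the union as a single orbit closure (or a union of two). For (ii) I would use the standard fact that orbit closures in $\lieso_{2m}$ are governed by the dominance order on admissible partitions: $\overline{\mathcal{O}}_\lambda = \bigcup_{\mu \le \lambda} \mathcal{O}_\mu$ where $\mu$ runs over admissible partitions dominated by $\lambda$. Among the admissible partitions $[2^{2k}, 1^{2(m-2k)}]$, the dominance order is a total order (larger $k$ dominates), with top element $[2^{m-1}, 1^2]$ when $m$ is odd and the two incomparable very even partitions $\mathcal{O}^{I/II}_{[2^m]}$ dominating everything when $m$ is even; any partition with a part $\ge 3$ does \emph{not} lie in the union since it is not of square-zero type. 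This yields the two displayed formulas.

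\textbf{Main obstacle.} The genuinely delicate points are bookkeeping with the $\lieso$-classification: remembering that the admissibility condition applies to \emph{even} parts only, correctly locating the very-even case, and — for the case $m$ even — verifying that $Z$ really contains \emph{both} orbits $\mathcal{O}^I_{[2^m]}$ and $\mathcal{O}^{II}_{[2^m]}$ rather than just one. The latter is not automatic from $\dim$ or from rank; one has to exhibit elements of both orbits in the image of $\nu$, or argue that $Z$, being $SO_{2m}$-stable and containing a square-zero matrix of rank exactly $m$, must meet — hence contain — each of the two open orbits of that type (using that an $SO_{2m}$-orbit is either contained in or disjoint from $Z$, together with the surjectivity construction in the proof of Proposition \ref{SpQuot}, which produces a preimage $X$ for \emph{every} prescribed $A$ of the right rank, including representatives of both $\mathcal{O}^{I/II}_{[2^m]}$). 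I expect the write-up to lean on the surjectivity already proved, so this is not a serious gap, merely a point to state explicitly. A secondary subtlety is confirming that the rank bound $\rk A \le \min\{2n, m\}$ contributes nothing beyond $\rk A \le m$ in the range of interest; when $m > 2n$ one would get a smaller union, but Schwarz' list keeps us in $m \le 2n+1$, so at worst $\min\{2n,m\} \in \{m-1, m\}$ — and $\rk A \le m-1$ is already forced for square-zero $A$ unless the type is exactly $[2^m]$, so only the very-even top stratum could a priori be affected; one checks it is not, since there $\rk A = m \le 2n$ exactly when $m \le 2n$, consistent with the hypotheses.
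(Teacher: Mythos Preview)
Your proposal is correct and follows essentially the same route as the paper: both arguments reduce to the observation that $A^2=0$ forces Jordan type $[2^a,1^b]$ with $\rk A = a$, invoke the $\lieso_{2m}$ admissibility condition (even parts with even multiplicity, hence $a=2k$) from \cite{cmg:1993}, and then check that the rank bound $\min\{2n,m\}$ imposes nothing further in the range $m\le 2n+1$. Your write-up is more explicit than the paper's about the very-even case and about why both orbits $\mathcal{O}^{I}_{[2^m]}$, $\mathcal{O}^{II}_{[2^m]}$ lie in $Z$; the paper handles the latter implicitly, since the defining conditions $A^2=0$ and $\rk A\le\min\{2n,m\}$ depend only on the Jordan type, so any matrix of type $[2^m]$---in either $SO_{2m}$-orbit---automatically lies in $Z$.
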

\begin{proof} Obviously the union of the indicated orbits is the closure of the orbits with highest rank in both cases.\par
"$\subseteq$": Because of $A^2 = 0$ each element $A \in Z$ can have Jordan blocks of size at most $2$. The rank condition on $Z$ implies that the Jordan form of an element in $Z$ has at most $m$ blocks of size $2$ if $m \leq 2n$ and at most $2n$ such blocks if $m = 2n + 1$, since every $2\times 2$--block raises the rank exactly by $1$. The indicated unions contain all Jordan forms of these types, therefore they contain $Z$.\par
"$\supseteq$": Conversely let $X$ be a matrix in one of the indicated orbits. Then $X^2 = 0$ because the Jordan form of $X$ consists of blocks of size at most $2$. The maximal number of such blocks implies $\rk X \leq \min\{2n,m\}$ in both cases. Thus $X \in Z$.\par
The closure of a nilpotent orbit is always reduced. So the orbit closures and $Z$ also agree as varieties, both describe $(\kxC^{2n})^{\oplus 2m}\sred Sp_{2n}$ if and only if $\mu^{-1}(0)$ is reduced.
\end{proof}

To analyse normality of nilpotent orbits we work with \cite{kp:1982}. From the criterion for normality stated there together with table 3.4, section 2.3 and the theorem in section 17.3 of loc. cit. we deduce:

\begin{prop}\label{znorm}
$Z$ is normal if and only if $m$ is odd. If $m$ is even, $Z$ decomposes into two normal components. The same holds for $(\kxC^{2n})^{\oplus 2m}\sred Sp_{2n}$ if $\mu^{-1}(0)$ is reduced.
\end{prop}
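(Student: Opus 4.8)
The plan is to exploit the description of $Z$ (with its reduced structure) as the union of nilpotent orbit closures in $\lieso_{2m}$ furnished by Proposition~\ref{nilOrb}; the assertion about $(\kxC^{2n})^{\oplus 2m}\sred Sp_{2n}$ when $\mu^{-1}(0)$ is reduced is then immediate from the last sentence of that proposition, so I will argue only about $Z$. First I would determine the irreducible components. For $m$ odd, Proposition~\ref{nilOrb} gives $Z = \overline{\mathcal{O}}_{[2^{m-1},1^2]}$, a single orbit closure, hence irreducible. For $m$ even, $Z = \overline{\mathcal{O}}^{I}_{[2^m]} \cup \overline{\mathcal{O}}^{II}_{[2^m]}$; since the orbits $\mathcal{O}^{I}_{[2^m]}$ and $\mathcal{O}^{II}_{[2^m]}$ are distinct and of the same dimension, neither of the two closures is contained in the other, so they are precisely the two irreducible components of $Z$. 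I would then note that the unique nilpotent orbit lying directly below both of them is $\mathcal{O}_{[2^{m-2},1^4]}$ (the intermediate partition $[2^{m-1},1^2]$ being inadmissible for $\lieso_{2m}$ when $m$ is even), so that $\overline{\mathcal{O}}^{I}_{[2^m]} \cap \overline{\mathcal{O}}^{II}_{[2^m]} \supseteq \overline{\mathcal{O}}_{[2^{m-2},1^4]} \ni 0$. Hence for $m$ even $Z$ is connected with two irreducible components, and a connected variety with more than one irreducible component is never normal. This already shows that $Z$ is not normal when $m$ is even.

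It remains to show that each irreducible component of $Z$ is normal. Every such component is the closure of a nilpotent orbit (or of one of the two components of a very even orbit) in $\lieso_{2m}$ whose partition has all parts $\leq 2$, and here I would apply the Kraft--Procesi normality criterion of \cite{kp:1982} (their Table~3.4 together with Sections~2.3 and~17.3, the latter covering the split very even orbits): along the boundary of such a closure no reducible minimal degeneration occurs within a single component, and the remaining minimal degenerations are of the types known to be normal --- a minimal nilpotent orbit closure, respectively a Kleinian surface singularity --- so the closure is normal. For the odd case a classification-free argument is available as well: setting $n_0 = \tfrac{m-1}{2}$ gives $m = 2n_0+1$, so $\mu^{-1}(0)$ is normal by Theorem~\ref{thmnormal}; then $\mu^{-1}(0)\red Sp_{2n_0}$ is normal, being a GIT quotient of a normal affine variety by a reductive group, and by Propositions~\ref{SpQuot} and~\ref{nilOrb} it equals $\overline{\mathcal{O}}_{[2^{m-1},1^2]}\subset\lieso_{2m}$ with reduced structure; as this orbit closure depends only on $m$, it is normal for every $n$.

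Putting the pieces together: for $m$ odd $Z$ is irreducible and normal, while for $m$ even $Z$ is reducible --- hence not normal --- with two normal components, and the corresponding statement for $(\kxC^{2n})^{\oplus 2m}\sred Sp_{2n}$ under the reducedness hypothesis is the identification in Proposition~\ref{nilOrb}. The step I expect to be the main obstacle is the correct invocation of the Kraft--Procesi criterion, especially for the two components $\mathcal{O}^{I/II}_{[2^m]}$ of the very even orbit: it is precisely here that the parity of $m$ enters, so one has to read off the combinatorial condition from the tables of loc.\ cit.\ without error.
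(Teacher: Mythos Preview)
Your argument is correct and follows the same route as the paper, which simply invokes the Kraft--Procesi analysis \cite{kp:1982} (Table~3.4, \S2.3, and the theorem in \S17.3) to deduce the statement from the orbit description of Proposition~\ref{nilOrb}. Your write-up is actually more explicit than the paper's one-line citation: you spell out why $Z$ fails to be normal for even $m$ (it is connected but has two components, both containing $0$), which the paper leaves buried in the reference. The self-contained alternative you give for odd $m$ --- taking $n_0=\tfrac{m-1}{2}$ so that Theorem~\ref{thmnormal} yields a normal $\mu^{-1}(0)$, and then using that a GIT quotient of a normal affine variety by a reductive group is normal --- is a genuine addition not in the paper; it makes that half of the proposition independent of the Kraft--Procesi tables, leaving the external input needed only for the normality of the two very even components $\overline{\mathcal{O}}^{I/II}_{[2^m]}$.
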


Fu gives a criterion for nilpotent orbit closures to be symplectically resolvable:

\begin{prop}[Fu, \cite{fu:2003}]
Let $\mathcal{O}$ be a nilpotent orbit of $\lieso_{2m}$ associated to the partition $d = [d_1,\hdots,d_N]$. Its closure $\overline{\mathcal{O}}$ has a symplectic resolution if and only if there is an even $q \neq 2$ such that $d_1,\hdots,d_q$ are odd and $d_{q+1},\hdots,d_N$ are even, or if there are exactly two odd parts at the positions $2k-1$ and $2k$ for some $k \in \mathbbm{N}$.
\end{prop}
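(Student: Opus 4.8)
The plan is to reduce the existence of a symplectic resolution of $\overline{\mathcal{O}}$ to the existence of a \emph{polarization}, that is, of a parabolic subgroup $P\subset SO_{2m}$ whose generalized Springer (moment) map
$$T^*(SO_{2m}/P)\;\cong\;SO_{2m}\times_P\mathfrak{n}_P\;\longrightarrow\;\overline{\mathcal{O}}_P\subset\lieso_{2m},\qquad [g,x]\mapsto\Ad(g)x,$$
is birational onto its image; equivalently, the Richardson orbit $\mathcal{O}_P$ of $P$ equals $\mathcal{O}$ and the map has degree $1$. One implication is soft: $T^*(SO_{2m}/P)$ is smooth and carries its canonical holomorphic symplectic form, the Springer map is proper, and if it is birational it is an isomorphism over the open orbit $\mathcal{O}$ --- it is quasi-finite there by semismallness, hence finite, hence an isomorphism onto the normal variety $\mathcal{O}$ by Zariski's main theorem --- under which the canonical form restricts to the Kirillov--Kostant--Souriau form; since $\mathcal{O}$ is dense in $\overline{\mathcal{O}}$, this is exactly a symplectic resolution in the sense of Section~\ref{sred} (of $\overline{\mathcal{O}}$, or of its normalization when $\overline{\mathcal{O}}$ is not normal). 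For the converse one needs that \emph{every} symplectic resolution of a nilpotent orbit closure is of this Springer type; this is the structural theorem of Fu \cite{fu:2003} (building on Namikawa \cite{nam:2001}), after which birationality of the resolution map is automatic. Hence $\overline{\mathcal{O}}$ admits a symplectic resolution if and only if $\mathcal{O}$ admits a polarization.

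What then remains is a purely combinatorial question about the partition $d$. Up to conjugacy a parabolic of $SO_{2m}$ is the stabilizer of an isotropic flag, so it is encoded by a flag type $(b_1,\dots,b_r)$ --- the successive jumps in dimension --- together with a choice between the two $SO_{2m}$-classes of maximal isotropic subspaces when the flag reaches the middle; its Levi is $GL_{b_1}\times\dots\times GL_{b_r}\times SO_{2c}$ with $\sum b_i+c=m$. The Richardson orbit $\mathcal{O}_P=\mathrm{Ind}^{\lieso_{2m}}_L(0)$ is then computed by Lusztig--Spaltenstein induction, which in classical type amounts to transposing the composition read off from the flag and then applying Kempf's ``$D$-collapse'' (cf.\ the combinatorics underlying \cite{cmg:1993}). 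The amount of collapsing needed, together with a possible factor $2$ stemming from $O_{2m}/SO_{2m}$, governs the degree of the Springer map: by Hesselink's explicit formula for the degree of a polarization in the classical groups, the map $T^*(SO_{2m}/P)\to\overline{\mathcal{O}}_P$ is birational precisely when the flag type realizes $d$ with no collapse and no such doubling intervenes.

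The remaining work is to run through all flag types whose $D$-collapsed transpose equals $d$, evaluate Hesselink's degree for each, and record exactly when the value $1$ occurs. Carrying this out produces the two cases in the statement: writing $d=[d_1\ge\dots\ge d_N]$, either all of the larger parts are odd and all of the smaller ones even --- the number $q$ of odd parts then being automatically even, since $\sum d_i$ and the sum of the even parts are even, and the requirement $q\neq 2$ serving to separate this from the next case, while $q=0$ is the very even situation, in which $\overline{\mathcal{O}}$ breaks into two components and each of the two parabolics $P_{\mathrm{I}},P_{\mathrm{II}}$ coming from the two families of maximal isotropics polarizes one of them, in agreement with Proposition~\ref{znorm} --- or there are exactly two odd parts, which by the same parity constraint (the even parts preceding them occur in pairs) necessarily sit at consecutive positions $2k-1,2k$. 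This is the asserted criterion.

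I expect the real difficulty to lie in the third step, the degree computation. Showing that the Springer map has degree \emph{exactly} $1$, rather than merely that it is dominant of some finite degree, requires Hesselink's (or Borho--MacPherson's) description of the Springer fibre over a generic point of $\mathcal{O}$, and this is where type $D$ is delicate: the $D$-collapse and the presence of two conjugacy classes of maximal isotropic subspaces are exactly what enforce the exclusion $q\neq 2$ and the precise placement of the two odd parts, and the very even partitions (for which $\overline{\mathcal{O}}$ is reducible) must be treated separately. By contrast, the reduction to polarizations --- granting \cite{fu:2003} --- and the bookkeeping of flag types are formal.
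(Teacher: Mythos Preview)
The paper does not prove this proposition at all: it is quoted as a result of Fu \cite{fu:2003} and used as a black box to decide which of the orbit closures $\overline{\mathcal{O}}_{[2^{m-1},1^2]}$ and $\overline{\mathcal{O}}^{I,II}_{[2^m]}$ admit symplectic resolutions. There is therefore nothing in the paper to compare your sketch against.

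That said, your outline is a faithful summary of Fu's actual strategy in \cite{fu:2003}: first the structural theorem that every projective symplectic resolution of a nilpotent orbit closure in a classical Lie algebra is a generalized Springer map $T^*(G/P)\to\overline{\mathcal{O}}$ for some parabolic $P$, and then the combinatorial determination --- via Richardson orbits, Lusztig--Spaltenstein induction, and Hesselink's degree formula --- of exactly which partitions $d$ of type $D$ admit a polarization with degree $1$. Your identification of the delicate points (the $D$-collapse, the two $SO_{2m}$-classes of maximal isotropics, and the separate treatment of very even partitions) is accurate. One small caution: the converse direction --- that every symplectic resolution is of Springer type --- is the substantial part of \cite{fu:2003} and is not a consequence of \cite{nam:2001} alone; you invoke it correctly but should not suggest it is ``soft.''
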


If $m$ is even, both orbit closures $\overline{\mathcal{O}}^I_{[2^m]}$ and $\overline{\mathcal{O}}^{II}_{[2^m]}$ admit a symplectic resolution: set $q = 0$ in the proposition. The union of these constitutes a symplectic resolution for $Z$.
For orbits associated to the partitions $[2^{m-1},1^2]$ the first condition of the proposition never holds. The second condition is fulfilled because there occur exactly two ones at the  positions $2m\mi 1$ and $2m$. So $Z$ also admits a symplectic resolution for odd $m$.
In particular, $(\kxC^{2n})^{4n+2}\sred Sp_{2n}$ has a symplectic resolution.

\begin{rem}
As $\mu^{-1}(0)$ and hence $\mu^{-1}(0)\red Sp_{2n}$ is normal, the existence of a symplectic resolution implies that $(\kxC^{2n})^{4n+2}\sred Sp_{2n}$ is a symplectic variety indeed.
\end{rem}

Now we take a look at the quotients $\mu^{-1}(0)\red Sl_2$. First we consider the double of the case $m = 1$. Up to multiples $z^2 = 0$ is the only relation of the single invariant $z := \det X$, so the quotient is the non--reduced variety
$$\kxC^2 \oplus \kxC^2 \sred Sl_2 = \Spec\bigl(\kxC[z]/(z^2)\bigr).$$
This is not a symplectic variety.

\pagebreak

For the double of $\varrho_2^1$, the six invariants satisfy eleven relations modulo $\II_{\mu}$.
Identifying the invariants with the entries of a matrix $A \in \lieso_4$ yields an isomorphism of varieties
\vspace{-0.5em}
$$ (\kxC^2 \oplus \kxC^2)^{\oplus 2} \sred Sl_2 \cong \{ A \in \lieso_4 \mid A^2 = 0,\, \Pf(QA) = 0\},$$
where the Pfaffian guarantees reducedness on the right. The quotient is not normal, but the union of two normal components by proposition \ref{znorm}. 
On the contrary, according to the general case, the preimage of the moment map is normal if $m = 3$. The quotient is the six dimensional symplectic variety and nilpotent orbit closure
$$ (\kxC^2)^{\oplus 6} \sred Sl_2 \cong \{ A \in \lieso_6 \mid A^2 = 0,\, \rk A \leq 2, \, \Pf_4(QA) = 0 \} = \overline{\mathcal{O}}_{[2^2,1^2]},$$
where $\Pf_4(QA)$ denotes the Pfaffians of the $15$ skew-symmetric $4 \times 4$--minors of $QA$, which again assure reducedness.
\vspace{1ex}

It is a well-known fact that the singular locus of the nilpotent orbit $\mathcal{O}_{[2^2,1^2]}$ is the nilpotent orbit $\mathcal{O}_{[1^6]}$, which is nothing else but the origin.
According to \cite{fu:2003} the nilpotent orbit closure $\overline{\mathcal{O}}_{[2^2,1^2]}$ has a symplectic resolution. 
The isomorphism of Lie algebras $\lieso_6 \cong \liesl_4$ identifies $\{ A \in \lieso_6 \mid A^2 = 0,\, \rk A \leq 2, \, \Pf_4(QA) = 0 \}$ with  $Y := \{B \in \liesl_4 \mid \rk B \leq 1\}$. This variety, and hence $(\kxC^2)^{\oplus 6} \sred Sl_2$, has two well-known symplectic resolutions by the cotangent bundle and its dual:
$$\begin{array}{l}
\{(A,L) \in Y \times \prP^3 \mid \im A \subset L\} \to Y,\\
\{(A,H) \in Y \times (\prP^3)^* \mid H \subset \ker A\} \to Y.
\end{array}$$

\begin{rem}
Note that the singular locus is of codimension $6$. Since there is a symplectic resolution, the singularity cannot be locally $\mathbbm{Q}$-factorial by \cite{nam:2006}.
\end{rem}

\section{The action of $Sl_2$ on symmetric powers and on its Lie algebra}

Next we present the classical cases of the action of $Sl_2$ on the symmetric powers $S^3\kxC^2$ and $S^4\kxC^2$ whose invariants and relations have already been determined by Hilbert in \cite{hil:1890, hil:1893}. The symplectic reduction of both actions is isomorphic to a quotient of a finite group action. This was expected since both are so-called polar representations, see \cite{cle:2007}.

Afterwards we overview the action of $Sl_2$ on one and two copies of its Lie algebra. We use the known invariants to compute the symplectic reduction.

This section shows that the quotients of both types (2) and (3) of representations of $Sl_2$ are symplectic varieties and admit a symplectic resolution.

\subsection{The action of $Sl_2$ on symmetric powers}\hfill\par

We consider elements of $S^3\kxC^2$ as binary cubics $A := a_0x^3 + 3a_1x^2y + 3a_2xy^2 + a_3y^3$ and elements of $S^4\kxC^2$ as polynomials $A := a_0x^4 + 4a_1x^3y + 6a_2x^2y^2 + 4a_3xy^3 + a_4y^4$. 
The actions of $Sl_2$ on $S^3\kxC^2$ and $S^4\kxC^2$ are induced by the action of $Sl_2$ on $\kxC^2$, where the action is multiplication from the left, via $g \cdot (x^{i}y^{j}) = (gx)^{i}(gy)^{j}$.

Both symmetric powers have a non-degenerate $Sl_2$--invariant pairing, given by
$$\begin{array}{l}
\sigma\phantom{'}\colon S^3\kxC^2 \times S^3\kxC^2 \to \kxC,\; (A, B) \mapsto a_0b_3-a_3b_0-3(a_1b_2-a_2b_1),\\
\sigma'\colon S^4\kxC^2 \times S^4\kxC^2 \to \kxC,\; (A,B) \mapsto a_0 b_4 + b_0 a_4 - 4 a_1 b_3 - 4 b_1 a_3 + 6 a_2 b_2.
\end{array}$$
So both actions in consideration are self-dual.
\vspace{1em}

There is only one invariant for the simple action of $Sl_2$ on $S^3\kxC^2$, namely the discriminant $d = - 4a_0a_2^3 - 4a_1^3a_3 - a_0^2a_3^2 + 3a_1^2a_2^2 + 6a_0a_1a_2a_3$. 
On the double there are seven invariants: An invariant of degree $2$ is $F := \sigma(A,B)$, five invariants of degree $4$ are obtained by polarising the discrimanant and an invariant of degree $6$ is the resultant. These invariants fulfill two relations of degree $8$ and $10$.

The moment map for this action is $ \mu\colon S^3\kxC^2 \oplus S^3\kxC^2 \longrightarrow \liesl_2^*$, where
$\mu(A,B)(\xi) = 3\big((a_0\xi_{11} + a_1\xi_{12})b_3 - (a_0\xi_{21} + 3a_1\xi_{11} + 2a_2\xi_{12})b_2 +(2a_1\xi_{21} - a_2\xi_{11} + a_3\xi_{12})b_1 - (a_2\xi_{21} - a_3\xi_{11})b_0\big)$.
Generators of the corresponding momentum ideal $\II_{\mu}$ are given by $a_1b_3+a_3b_1-2a_2b_2$, $a_0b_3+a_3b_0-a_1b_2-a_2b_1$ and $2a_1b_1-a_0b_2-a_2b_0$.

Modulo $\II_{\mu}$ there remain three invariants $F = \sigma(A,B)$, $d_0 = \disc(A)$, $d_4 = \disc(B)$ with one relation $16d_0d_4-F^4$. Up to scaling this is the equation of an $A_3$--singularity in $\{0\}$. Therefore $S^3\kxC^2 \oplus S^3\kxC^2 \sred Sl_2 \cong \kxC^2/ (\intZ/4)$ is a symplectic variety and symplectically resolvable by two successive blow ups in the origin.
\vspace{1em}

The simple action of $Sl_2$ on $S^4\kxC^2$ has two invariants, which are the quadratic form 
$Q = a_0 a_4 - 4 a_1 a_3 + 3 a_2^2$ and the Catalectian 
$C = \det \left( \begin{smallmatrix} a_0 & a_1 & a_2 \\ a_1 & a_2 & a_3 \\ a_2 & a_3 & a_4 \end{smallmatrix}\right)$. 
Polarising these yields three quadratic invariants $Q_0, Q_1, Q_2$ and four cubic invariants $C_0, C_1, C_2, C_3$, which are algebraically independent. An additional invariant is 
$T := \det\left(\begin{smallmatrix} a_0 & 3a_1 & 3a_2 & a_3 \\ a_1 & 3a_2 & 3a_3 & a_4 \\ b_0 & 3b_1 & 3b_2 & b_3 \\ b_1 & 3b_2 & 3b_3 & b_4 \end{smallmatrix}\right)$.
These eight invariants generate the ring of invariants. There is one relation in degree 12.


Here the moment map is $\mu\colon S^4\kxC^2 \oplus S^4\kxC^2 \to \liesl_2^*$, 
$\mu(A,B)(\xi) = 4\bigl( (\xi_{12}a_1 \plu a_0\xi_{11})b_4 -(\xi_{21}a_0 + 2a_1\xi_{11} + 3\xi_{12}a_2)b_3 + 3(\xi_{12}a_3 - \xi_{21}a_1)b_2 - (3\xi_{21}a_2 + \xi_{12}a_4 - 2a_3\xi_{11})b_1 + (\xi_{21}a_3 - a_4\xi_{11})b_0\bigr)$,
with momentum ideal $\II_{\mu}$ generated by $b_4 a_1 \mi 3 b_3 a_2 \mi b_1 a_4 \pl 3 b_2 a_3$, $a_0 b_4 \mi b_0 a_4 \mi 2 a_1 b_3 \pl 2 b_1 a_3$ and $b_0 a_3 \mi b_3 a_0 \mi 3 b_1 a_2 \pl 3 b_2 a_1$.

The ring of coordinates of the quotient $\mu^{-1}(0)\red Sl_2$ is generated by $Q_0$, $Q_1$, $Q_2$, $C_0$, $C_1$, $C_2$, $C_3$.
Comparing these invariants and their relations with the invariants and relations of the action of the symmetric group $S_3$ on the double of $(\kxC^3)_0 = \{ (y_1\;y_2\;y_3)^t \in \kxC^3 \mid y_1 + y_2 + y_3 = 0 \} \cong \kxC^2$ via permutation of indices, we obtain the same quotient. Thus it is a symplectic variety as a subset of $S^3\kxC^2 = (\kxC^2)^{\oplus 3}/S_3$.

The singular locus of $(\kxC^3)_0 \oplus (\kxC^3)_0 / S_3$ can be resolved by a Hilbert scheme:  
The barycentral map $s:(\kxC^2)^{\oplus 3} \to \kxC^2$, $\bigl( \binom{x_1}{y_1},\binom{x_2}{y_2},\binom{x_3}{y_3} \bigr) \mapsto \frac{1}{3}\binom{x_1+x_2+x_3}{y_1+y_2+y_3}$
is $S_3$--invariant and therefore factorises via $(\kxC^2)^{\oplus 3}/S_3$. The preimage of $0$ under the induced map $\overline{s}$ is exactly our quotient $(\kxC^3)_0 \oplus (\kxC^3)_0 / S_3$. Composing $\overline s$ with the Hilbert--Chow--morphism $\rho\colon \Hilb^3(\kxC^2) \to S^3\kxC^2$, which is a resolution of $S^3\kxC^2$ due to Fogarty \cite{fog:1968}, and even a symplectic one, cf. \cite{bea:1983}, we obtain
$$ \begin{array}{ccccc}
\Hilb^3(\kxC^2) & \xrightarrow{\quad \rho\quad} & S^3\kxC^2 & \xrightarrow{\quad \overline{s}\quad} & \kxC^2 \\
\vertsub & & \vertsub & & \vertin \\
\rho^{-1}((\kxC^3)_0 \oplus (\kxC^3)_0/S_3) & \xrightarrow{\quad \phantom{\rho}\quad} & (\kxC^3)_0 \oplus (\kxC^3)_0/S_3 & \xrightarrow{\quad \phantom{\overline{s}}\quad} & 0. \end{array}$$
So the restriction of the Hilbert--Chow--morphism to $\rho^{-1}((\kxC^3)_0 \oplus (\kxC^3)_0/S_3)$ is a symplectic resolution for $(\kxC^3)_0 \oplus (\kxC^3)_0/S_3 = S^4\kxC^2 \oplus S^4\kxC^2 \sred Sl_2$.

\subsection{The action of $Sl_2$ on its Lie algebra}\hfill\par
\label{abslie}

The action of $Sl_2$ on $\liesl_2$ is just the adjoint representation $\Ad(g)(A) = gAg^{-1}$. 
The map $\Ad$ maps $Sl_2$ to $Gl(\liesl_2)$, even to the orthogonal group $SO_3$. This is the well-known $2:1$--covering.  
Identifying $\liesl_2$ with $\kxC^3$, the actions $Sl_2 \acts \liesl_2$ and $SO_3 \acts \kxC^3$ coincide via this covering, so they have the same rings of invariants, even at several copies:
$\kxC[(\kxC^3)^{\oplus k}]^{SO_3} = \kxC[\liesl_2^{\oplus k}]^{Sl_2}$.
So instead of analysing $Sl_2 \acts \liesl_2$, we consider the action of $SO_3$ on $\kxC^3$, where we dispose of the fundamental theorems, to compute the symplectic reductions $\liesl_2^{\oplus 2} \sred Sl_2$ and $\liesl_2^{\oplus 4} \sred Sl_2$.
\vspace{1ex}

The action we have to consider now is
$$\vartheta_n \colon SO_3 \times (\kxC^3)^{\oplus n} \to (\kxC^3)^{\oplus n},\; (g, x^{(1)}, \hdots, x^{(n)}) \mapsto (gx^{(1)}, \hdots, gx^{(n)})$$
in the cases $n = 1$, $n = 2$, resp. $n = 2$, $n = 4$ for the doubles because the action is self-dual since $(x,y) \mapsto y^tQx$ gives an invariant non-degenerate pairing on $\kxC^3$.
Writing $X' = (x^{(1)} \mid \hdots \mid x^{(n)})$, $X'' = (x^{(n+1)} \mid \hdots \mid x^{(2n)})$ and $X = (X', X'')$, the pairing identifying $(\kxC^{3})^{\oplus n}$ with its dual takes the shape
$(\kxC^3)^{\oplus n} \times (\kxC^3)^{\oplus n} \to \kxC$, $(X',X'') \mapsto \tr((X'')^tQX')$.

According to the first fundamental theorem for $SO_3$ all invariants are
$$\begin{array}{ll}
t_{ij} := (x^{(i)})^tQx^{(j)}, &i \leq j,\quad i,j \in \{ 1, \hdots, n \}, \\
\det(x^{(i_1)} \mid x^{(i_2)} \mid x^{(i_3)}), &i_1 < i_2 < i_3, \quad i_1, i_2 ,i_3 \in \{1,\hdots,n\}.
\end{array}$$
In the cases $n = 1$, $n = 2$ there are less than three vectors, so the second type of invariants does not appear. The only invariant of $\vartheta_1$ is $x^tQx$, 
$\vartheta_2$ has three invariants $x^tQx$, $x^tQy$ and $y^tQy$ if we set $x := x^{(1)}$, $y := x^{(2)}$. Both actions do not have any relations by the second fundamental theorem, hence they are in Schwarz' list (table $3a$, item $1$ and $4$).

For $n = 4$ there are ten invariants of the first type and four invariants of the second type. For a clearly arranged description of the relations we refer to \cite{ls:2006}.

With analogous calculations to the ones of proposition \ref{SpImpuls} we obtain the moment map
$\mu\colon (\kxC^m)^{\oplus 2n} \rightarrow \lieso_m,\; (X',X'') \mapsto \frac{1}{2}(X'(X'')^t - X''(X')^t)Q = -\frac{1}{2}XJX^tQ.$
In the special case of $\vartheta_1$ doubled the entries of $\mu(x,y)$ provide the momentum ideal
$\II_{\mu} = \left( x_1y_2 - x_2y_1,\; x_1y_3 - x_3y_1,\; x_2y_3 - x_3y_2 \right)$.
Writing $X' = (x \mid y)$, $X'' = (z \mid u)$ in case of the double of $\vartheta_2$, the ideal
$\II_{\mu}$ is generated by $x_1z_2 + y_1u_2 - x_2z_1 - y_2u_1$, $x_1z_3 + y_1u_3 - x_3z_1 - y_3u_1$ and $x_2z_3 + y_2u_3 - x_3z_2 - y_3u_2.$
\vspace{1ex}

Now we reduce the invariants and relations modulo the momentum ideal. 
The invariants $t_{11} = x^tQx$, $t_{12} = x^tQy$, $t_{22} = y^tQy$ of the double of $\vartheta_1$ satisfy a single relation modulo $\II_{\mu}$, namely $t_{12}^2 - t_{11}t_{22} = 0$. Thus $\mu^{-1}(0) \red SO_3$, and with it $\liesl_2^{\oplus 2} \sred Sl_2$, has only an isolated $A_1$--singularity. This shows that the quotient has a symplectic structure. The $A_1$--singularity admits a symplectic resolution by blowing up the origin.

For the second example it turns out that the determinantal invariants $T_1,\hdots,T_4$ are dispensable modulo $\II_{\mu}$, the other ten invariants fulfill six relations. The quotient $\mu^{-1}(0) \red SO_3 = (\liesl_2)^{\oplus 4} \sred Sl_2$ is isomorphic to the nilpotent orbit closure $\overline{\mathcal{O}}_{[2^2]} = \{A \in \liesp_4 \mid A^2 = 0\}$ and hence is a symplectic variety. Its singular part is $\overline{\mathcal{O}}_{[2,1^2]} = \{A \in \liesp_4 \mid A^2 = 0,\, \rk A \leq 1\}$. 
In \cite{ls:2006}, Lehn and Sorger show that the cotangent bundle $\{ (B,U) \subset \overline{\mathcal{O}}_{[2^2]} \times G \mid U \subset \ker B \} \to \overline{\mathcal{O}}_{[2^2]}$ is a symplectic resolution.

\section{The action of $Sl_2$ on $\liesl_2 \oplus \kxC^2$}
\label{sl2plusC2}


The symplectic reduction of the last representation of $Sl_2$ we consider has a more exciting geometry than the preceding quotients, which were all more or less classical. Here the singular locus admits a symplectic resolution by a blow up whose zero fibre provides the configuration for performing a Mukai flop. This gives two further symplectic resolutions which are also algebraic as their explicit construction by blowing up a certain subvariety will show. Therefore even resolutions which arise from the canonical process of blowing up are not always the only projective symplectic resolutions.
\vspace{1ex}

The action on the sum of the special linear Lie algebra and the complex space also appears in Schwarz' list for general $n$:
$$Sl_n \times (\liesl_n \oplus \kxC^n) \to \liesl_n \oplus \kxC^n, \; g \cdot (A,x) = (gAg^{-1},gx).$$
To compute the symplectic double, we consider both summands seperately. 
On $\liesl_n$, like on any semisimple Lie algebra, there is a non-degenerate bilinear form $(A,B) \mapsto \tr(AB)$, which is invariant because $\tr(gAg^{-1}gBg^{-1}) = \tr(AB)$. So $\liesl_n$ is a self-dual representation via the isomorphism
$\liesl_n \longrightarrow \liesl_n^*$, $A \longmapsto (B \mapsto \tr(AB))$.

On $\kxC^n$ we consider the elements as columns whereas $(\kxC^n)^*$ contains rows. Then evaluating a linear form $y$ at a vector $x$ corresponds to usual matrix multiplication $yx$. The computation $y(g^{-1}x) = yg^{-1}x = (yg^{-1})(x)$ shows that the dual action is $Sl_n \times (\kxC^n)^* \to (\kxC^n)^*,\; (g,y) \mapsto yg^{-1}$.
Identifying $(\kxC^n)^*$ with $\kxC^n$ by taking the transpose, the dual action writes $Sl_n \times \kxC^n \to \kxC^n,\; (g,z) \mapsto (g^{-1})^tz$.
The corresponding invariant pairing is $\kxC^n \times \kxC^n \to \kxC, (x,z) \mapsto z^tx$.

Altogether we have computed the double
$$Sl_n \acts \liesl_n \oplus \kxC^n \oplus \kxC^n \oplus \liesl_n, \; g \cdot (A,x,y,B) = (gAg^{-1},gx,(g^t)^{-1}y,gBg^{-1}).$$

First we determine the invariants of the simple action, then we combine them to invariants of the symplectic double. Taking into consideration the decomposition of the representation into a direct sum we distinguish three types of invariants: pure invariants of $\kxC^n$ and $\liesl_n$ respectively, and mixed invariants. 
Considering $\kxC^n$, the first fundamental theorem for $Sl_n$ tells us that there are no invariants. 
According to the first fundamental theorem for matrices, generating invariants of conjugation of $Sl_n$ on its Lie algebra are exactly the coefficients of the characteristic polynomial or equivalently the traces $\tr A^k, \; k = 1, \hdots, n$. Of course $\tr A = 0$, so $k = 1$ is superfluous. 
A mixed invariant is $\det (x \mid Ax \mid \hdots \mid A^{n-1}x)$, since $\det(g) = 1$ and
$\det (gx \mid gAg^{-1}gx \mid \hdots \mid (gAg^{-1})^{n-1}gx) 
= \det(g) \det (x \mid Ax \mid \hdots \mid A^{n-1}x).$ 
Specialising to the case $n = 2$ we can prove

\begin{prop}
The ring of invariants of the simple action $Sl_2 \acts \liesl_2 \oplus \kxC^2$ is
$$\kxC[\,\liesl_2 \oplus \kxC^2]^{Sl_2} = \kxC[\, \tr A^2,\; \det (x \midd Ax)] = \kxC[\, \det A,\; \det (x \midd Ax)].$$
\end{prop}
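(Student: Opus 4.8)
The plan is to show the ring of invariants is generated by the two explicitly given functions $f_1 := \tr A^2$ and $f_2 := \det(x \midd Ax)$ by combining a dimension count with a concrete slice argument. First I would note that $\liesl_2 \oplus \kxC^2$ is $4$-dimensional and a generic $Sl_2$-orbit is $3$-dimensional (the stabilizer of a generic $(A,x)$ with $A$ regular semisimple is finite, since the centralizer of a regular semisimple $A$ is a maximal torus and requiring that torus also fix a generic $x$ cuts it down to a finite set). Hence the quotient has dimension at most $1$; in fact it is exactly $1$ because $f_1$ is non-constant. Actually — since this is a coregular representation from Schwarz' list and the two invariants should be algebraically independent only if the quotient were $2$-dimensional — I must be careful: the correct statement is that the quotient is $2$-dimensional (the generic orbit is $2$-dimensional: the stabilizer of generic $(A,x)$ is $1$-dimensional is false; rather a generic single element $x \neq 0$ has a $2$-dimensional stabilizer in $Sl_2$, and imposing a generic $A$ cuts this to finite, giving orbit dimension $3$ only if... ). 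To avoid this pitfall I would instead run the argument in the safe direction: establish that $f_1, f_2$ are algebraically independent (e.g. by exhibiting a point where their differentials are linearly independent, or by checking a single explicit pair $(A,x)$), so $\kxC[f_1,f_2]$ is a polynomial ring of dimension $2$, and separately bound $\dim (\liesl_2 \oplus \kxC^2) \red Sl_2 \le 2$ via a generic stabilizer computation; equality of dimensions then forces the inclusion $\kxC[f_1,f_2] \subseteq \kxC[\liesl_2\oplus\kxC^2]^{Sl_2}$ to be an inclusion of integral domains of the same transcendence degree.

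To upgrade the transcendence-degree statement to an equality of rings, I would use the fact (Schwarz) that the representation is coregular, so the invariant ring is a polynomial ring; a polynomial ring that is integral over — or has the same fraction field as — the subalgebra $\kxC[f_1,f_2]$, with both being polynomial rings in two variables, must coincide with it provided $f_1,f_2$ can be taken as a minimal homogeneous system of parameters. Concretely: the invariant ring is graded, generated in some degrees; I would show by a degree/Hilbert-series argument that there is no invariant of degree $< \deg f_1 = 2$ (degree $1$ invariants would be $Sl_2$-fixed linear forms on $\liesl_2 \oplus \kxC^2$, of which there are none since neither summand has a trivial subrepresentation), and that the next generator must have degree $= \deg f_2 = 4$ (degree $3$ invariants: the degree-$3$ part of the coordinate ring decomposes into irreducibles with no trivial summand — this is a finite $\mathrm{SL}_2$-representation-theory check). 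Matching these two generator degrees against the two generators $f_1, f_2$ then shows they generate.

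The equality $\kxC[\tr A^2, \det(x\midd Ax)] = \kxC[\det A, \det(x\midd Ax)]$ is immediate since on $\liesl_2$ one has $\tr A^2 = -2\det A$ (as $A^2 = -\det(A)\,I$ for traceless $2\times 2$ matrices by Cayley–Hamilton), so the two subalgebras are literally equal.

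I expect the main obstacle to be the dimension bookkeeping in the first paragraph — pinning down the generic orbit dimension and hence $\dim\bigl((\liesl_2\oplus\kxC^2)\red Sl_2\bigr)$ correctly, since a careless stabilizer count can be off. The cleanest route is probably to sidestep orbit dimensions entirely and argue purely via Schwarz' classification: the item in Schwarz' table corresponding to $Sl_2 \acts \liesl_2 \oplus \kxC^2$ records that the invariant ring is free on two generators of degrees $2$ and $4$; since $f_1$ and $f_2$ are manifestly invariant of exactly those degrees and are algebraically independent (one explicit sample point suffices), they must be the generators. The representation-theoretic verification that there are no invariants in degrees $1$ and $3$ is then the only genuine computation, and it is a routine decomposition of $\Sym^k(\liesl_2^* \oplus (\kxC^2)^*)$ into irreducibles for $k = 1, 3$.
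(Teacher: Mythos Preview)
Your proposal contains a concrete miscalculation that would make the argument fail. You write $\deg f_2 = 4$, but in fact $\det(x\mid Ax)$ has total degree~$3$: with $A=\left(\begin{smallmatrix} a_{11}&a_{12}\\ a_{21}&-a_{11}\end{smallmatrix}\right)$ and $x=\binom{x_1}{x_2}$ one gets $\det(x\mid Ax)=a_{21}x_1^2-2a_{11}x_1x_2-a_{12}x_2^2$, which is linear in the entries of $A$ and quadratic in $x$. Consequently your proposed verification ``there are no invariants in degree~$3$'' is exactly the step that would fail, since $\det(x\mid Ax)$ itself sits there. Relatedly, the Schwarz table entry you want to quote records generator degrees $2$ and $3$, not $2$ and $4$; so the ``match the degrees from the table'' route is fine in principle, but only once you have the correct degrees.

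There is also a dimension slip early on: $\liesl_2\oplus\kxC^2$ has dimension $3+2=5$, not $4$. With that corrected, your stabilizer discussion straightens out: a generic $(A,x)$ with $A$ regular semisimple and $x$ not an eigenvector has finite stabilizer, so the generic orbit is $3$-dimensional and the quotient is $2$-dimensional, consistent with two algebraically independent generators.

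For comparison, the paper bypasses all orbit/dimension bookkeeping and the degree-by-degree decomposition. It simply computes the Poincar\'e series of the representation, obtaining $\frac{1}{(1-t^2)(1-t^3)}$; this reads off directly that the invariant ring is polynomial on one generator of degree~$2$ and one of degree~$3$, algebraically independent. One then just checks that $\tr A^2$ and $\det(x\mid Ax)$ have these degrees and are visibly independent. Your Cayley--Hamilton observation $\tr A^2=-2\det A$ for the final equality is correct and is exactly what the paper uses.
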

\begin{proof}
We have already seen that both elements are invariants, so we only have to show that $\tr A^2$ and $\det (x \midd Ax)$ generate the ring of invariants. The Poincaré--series (cf. \cite{muk:2003, pv:1994}) of the simple action is $\frac{1}{(1-t^2)(1-t^3)}$ . The denominator indicates that there are two generating invariants, one of degree $2$ and one of degree $3$. They are algebraically independent because the numerator is $1$.

Now denoting $A = \left(\begin{smallmatrix} a_{11} & a_{12} \\ a_{21} & -a_{11} \end{smallmatrix}\right)$ and $x = \binom{x_1}{x_2}$ the invariant $\tr A^2 = 2(a_{11}^2 + a_{12}a_{21})$ is of degree $2$ and $\det (x \midd Ax) = a_{21} x_1^2 - 2a_{11}x_1x_2 - a_{12}x_2^2$ is of degree $3$. Obviously both invariants are algebraically independent, so they generate the ring of invariants. The last equation holds because $\tr A^2 = -2\det A$.
\end{proof}

Now we extrapolate the invariants of the simple action to invariants of the doubled one. 
Let $\Omega := \{A^{a_1}B^{b_1}\cdots A^{a_l}B^{b_l} \mid a_i, b_i \in \mathbbm{N}_0 \text{ for } i = 1,\hdots,l \}$ be the set of all words with characters $A$ and $B$. 
The invariants of conjugation by $Sl_n$ on two copies of its Lie algebra are the traces
$\tr W$, $W \in \Omega$, due to the first fundamental theorem for matrices. 
On $\kxC^n \oplus \kxC^n$ the first fundamental theorem for $Sl_n$ yields the existence of exactly one generating invariant, namely $y^tx$. But there are also mixed invariants: $y^tWx$, $W \in \Omega$, are invariants, as well as
$\det (W_1 x \mid W_2 x \mid \hdots \mid W_n x)$ and $\det (W_1^t y \mid W_2^t y \mid \hdots \mid W_n^t y)$, $W_i \in \Omega$. These mixed expressions in $x, y$ and $A, B$ are invariants, because $Sl_n$ acts on each word in  $\Omega$ via conjugation of the whole word as the actions in the middle eliminate themselves.

Again we can show that there is a set of generators of the ring of invariants among the invariants we have already found if $n = 2$:
\begin{prop}
\label{verdInv}
Generating invariants of $Sl_2 \acts \liesl_2 \oplus \kxC^2 \oplus \kxC^2 \oplus \liesl_2$ are
\begin{itemize}
\item $\det A,\; \det B,\; \tr AB$,
\item $y^tx,\; y^tAx,\; y^tBx,\; y^tABx$,
\item $\det (x \mid Ax),\; \det (x \mid Bx),\; \det (x \mid ABx)$ and
\item $\det (y \mid A^ty),\; \det (y \mid B^ty),\; \det (y \mid (AB)^ty)$.
\end{itemize}
\end{prop}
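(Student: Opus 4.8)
The plan is to show that the listed $15$ expressions generate $\kxC[\liesl_2 \oplus \kxC^2 \oplus \kxC^2 \oplus \liesl_2]^{Sl_2}$, arguing in two stages: first reduce the (a priori infinite) lists of invariants from the First Fundamental Theorems to a finite generating set, then check that the remaining generators are expressible in the listed ones. The starting point is the general description obtained just before the statement: every invariant is a polynomial in the traces $\tr W$ ($W \in \Omega$), the mixed linear invariants $y^t W x$ ($W \in \Omega$), and the determinantal invariants $\det(W_1 x \mid W_2 x)$ and $\det(W_1^t y \mid W_2^t y)$ ($W_i \in \Omega$, since $n=2$ here). So it suffices to express each of these in terms of the fifteen displayed ones.

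First I would handle the pure trace part. Since $A, B \in \liesl_2$ are traceless $2\times 2$ matrices, the Cayley--Hamilton identity $A^2 = -\det(A)\, I$ (and likewise for $B$, and the polarised identity $AB + BA = (\tr AB)\, I - \ldots$ — more precisely $AB+BA = \tr(AB) I$ since $\tr A = \tr B = 0$) lets one rewrite any word $W \in \Omega$ modulo scalars as a $\kxC$-linear combination of $I, A, B, AB$ with coefficients that are polynomials in $\det A$, $\det B$, $\tr AB$. Taking traces and using $\tr A = \tr B = 0$, $\tr I = 2$, $\tr AB = \tr BA$, one sees every $\tr W$ is a polynomial in $\det A, \det B, \tr AB$. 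The same linear reduction $W \equiv \alpha I + \beta A + \gamma B + \delta AB \pmod{\text{scalars in } \det A,\det B,\tr AB}$ immediately yields that each $y^t W x$ is a polynomial combination of $\det A, \det B, \tr AB$ and the four invariants $y^t x, y^t A x, y^t B x, y^t A B x$. For the determinantal invariants I would use the same reduction on $W_1, W_2$ together with bilinearity of $\det(\cdot \mid \cdot)$ in its two columns and antisymmetry, which collapses $\det(W_1 x \mid W_2 x)$ to a combination of $\det(x \mid Ax)$, $\det(x \mid Bx)$, $\det(x \mid ABx)$ (note $\det(x\mid x)=0$ kills the $I$-term), with coefficients polynomial in $\det A,\det B,\tr AB$; symmetrically for the $y$-determinants with $A^t, B^t, (AB)^t$.

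As a cross-check — and to make the reduction of the $y^t W x$ family airtight — I would invoke the Poincar\'e series of the doubled action (computable as in \cite{muk:2003, pv:1994}) and compare the number and degrees of generators with the fifteen candidates, exactly as was done for the simple action in the previous proposition; one expects the denominator to record the $15$ generators and the numerator to encode the single syzygy among them.

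\textbf{Main obstacle.} The routine linear-algebra reductions above are elementary; the real work is the bookkeeping that guarantees \emph{no} further generator is needed — in particular that the four mixed linear invariants $y^t x, y^t A x, y^t B x, y^t A B x$ genuinely suffice and that one does not also need, say, $\det(x\mid ABAx)$ as a new generator. This is where the Cayley--Hamilton reduction $W \equiv \alpha I + \beta A + \gamma B + \delta AB$ must be applied with care: it is valid for words, but the coefficients $\alpha,\beta,\gamma,\delta$ are themselves polynomials in $\det A, \det B, \tr AB$, and one must verify that after substitution every invariant lands in the subalgebra generated by the listed fifteen. The cleanest way to close this gap is the dimension/degree count via the Poincar\'e series, which turns the ``completeness'' claim into a finite verification.
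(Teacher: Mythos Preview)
Your approach is genuinely different from the paper's and, once patched, is arguably cleaner. The paper's proof is purely numerical: it computes the Poincar\'e series of the invariant ring, rewrites it over the denominator $(1-t^2)^4(1-t^3)^6(1-t^4)^3$ dictated by the degrees of the thirteen candidates, and then checks that the resulting numerator coincides with the Hilbert numerator of the subalgebra the candidates generate; equality of graded Hilbert series forces equality of the subalgebra with the full invariant ring. Your route---First Fundamental Theorem plus Cayley--Hamilton reduction of every word in $A,B$ to $\alpha I+\beta A+\gamma B+\delta AB$ with $\alpha,\beta,\gamma,\delta\in\kxC[\det A,\det B,\tr AB]$---gives an explicit algebraic reduction and avoids computer algebra. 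Each method has its merits: the paper's is mechanical but opaque; yours explains \emph{why} thirteen invariants suffice.

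That said, a few corrections. First, there are thirteen listed invariants, not fifteen, and the numerator is far from recording a single syzygy: the paper computes a degree-$28$ polynomial with many terms, so your ``cross-check'' expectation is off. Second, and more substantively, the text preceding the proposition does \emph{not} assert that the families $\tr W$, $y^tWx$, $\det(W_1x\mid W_2x)$, $\det(W_1^ty\mid W_2^ty)$ generate the invariant ring---it merely exhibits them as invariants. Your plan needs an actual First Fundamental Theorem for $Sl_2$ acting on matrices together with a vector and a covector (e.g.\ via Weyl \cite{wey:1946} or Procesi), not a reference to the paper's preamble. Third, in the determinantal reduction you should not skip the cross terms: after writing $W_1,W_2$ as combinations of $I,A,B,AB$ you still face $\det(Ax\mid Bx)$, $\det(Ax\mid ABx)$, $\det(Bx\mid ABx)$. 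These do reduce---for instance $\det(Ax\mid ABx)=\det A\cdot\det(x\mid Bx)$ and a short computation gives $\det(Ax\mid Bx)=-\det(x\mid ABx)$---but they deserve a line each rather than a wave of the hand.
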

\begin{proof}
The Poincaré--series of this representation is $-\frac{t^6-t^5+t^4+2t^3+t^2-t+1}{(t+1)^3(t^2+t+1)^3(t-1)^7}$, which has the expansion $1 + 4t^2 + 6t^3 + 13t^4 + 24t^5 + O(t^6)$.
The four invariants $\det A$, $\det B$, $\tr AB$, $y^tx$ are independent of degree $2$, whereas $y^tAx$, $y^tBx$, $\det (x | Ax)$, $\det (x | Bx)$, $\det(y | A^ty)$ and $\det(y | B^ty)$ are six invariants of degree $3$ and $y^tABx$, $\det (x | ABx)$, $\det (y | (AB)^ty)$ have degree $4$. None of these invariants can be expressed in terms of the others. If these are generators of the ring of invariants, we have to multiply the numerator of the Poincaré--series by $(1+t)(1-t^3)^3(1-t^4)^3$. Doing this, we obtain
$1-6t^6-8t^7-6t^8+8t^9+24t^{10}+24t^{11}+5t^{12}-24t^{13}-36t^{14}-24t^{15}+5t^{16}+24t^{17}+24t^{18}+8t^{19}-6t^{20}-8t^{21}-6t^{22}+t^{28}$
which is indeed the Hilbert--polynomial of the indicated invariants.
\end{proof}

For computing the moment map $\mu: \liesl_n \oplus \kxC^n \oplus \kxC^n \oplus \liesl_n \to \liesl_n^*$ explicitly we use a pairing comprised of the trace pairing on $\liesl_n$ and $(x,z) \mapsto z^tx$ on $\kxC^n$. Because of $\tr([A,B]) = 0$ and $\tr(xy^t) = \frac{1}{n}y^tx$ we have
$\mu(A,x,y,B)(\xi) = \tr((\xi A - A \xi)B) + y^t \xi x = \tr(\xi (AB - BA + xy^t))$. 
Using the identification $\liesl_n^* \longrightarrow \liesl_n, \, (B \mapsto \tr(AB)) \longmapsto A - \tr(A) I_n,$ we can consider $\mu$ as a map to $\liesl_n$. Then
$$\mu(A,x,y,B) = [A,B] + xy^t - \tfrac{1}{n}y^txI_n.$$

We write $V := \liesl_2 \oplus \kxC^2 \oplus \kxC^2 \oplus \liesl_2$ and $\kxC[a_{11},a_{12},a_{21},b_{11},b_{12},b_{21},x_1,x_2,y_1,y_2]$ for the ring of coordinates, so $(A,x,y,B) = 
\left( \bigl(\begin{smallmatrix} a_{11} & a_{12} \\ a_{21} & -a_{11} \end{smallmatrix}\bigr),  \binom{x_1}{x_2}, \binom{y_1}{y_2},
\bigl(\begin{smallmatrix} b_{11} & b_{12} \\ b_{21} & -b_{11} \end{smallmatrix}\bigr) \right) \in V$. 
Determining the preimage of zero under the moment map, the defining equation $0 = \mu(A,x,y,B) = [A,B] + xy^t - \frac{1}{2}y^txI_2$ yields the momentum ideal\par
$\II_{\mu} = (2 a_{11} b_{12} \mi 2 a_{12} b_{11} \pl x_1 y_2,\,a_{12} b_{21} \mi a_{21} b_{12} \pl \frac{1}{2} (x_1 y_1 \mi  x_2 y_2), 
\, 2 a_{21} b_{11} \mi 2 a_{11} b_{21} \pl x_2 y_1).$

\begin{rem} Modulo $\II_{\mu}$ the elements of $\Omega$ can be reduced to sorted words, i.e. elements in $\{A^k B^l \mid 0 \leq k,\, l \leq n-1 \}$ in the general case. As the generating invariants in the case $n = 2$ only consist of terms with sorted words this does not lead to decreasing the number of invariants here.
\end{rem}
Anyway, we need less than thirteen invariants to generate the symplectic reduction. A calculation with SINGULAR \cite{singular} shows that five of the original invariants can be omitted modulo $\II_{\mu}$, there remain eight invariants $z_1,\hdots,z_8$ with nine generating relations.

\begin{prop} The ring of invariants of the action of $Sl_2$ on $\mu^{-1}(0)$ is\par
$\kxC[\mu^{-1}(0)]^{Sl_2} = \kxC[z_1,\hdots,z_8]/(h_1,\hdots,h_9)$
with invariants
$$\begin{array}{llll}
z_1 = \det A, & z_2 = \tr AB, & z_3 = \det B, & z_4 = y^tx,\\
z_5 = \det (x \mid Ax), & z_6 = \det (x \mid Bx), & z_7 = \det (y \mid A^ty), & z_8 = \det (y \mid B^ty)
\end{array}$$
and relations $h_i = 0$, $i = 1,\hdots,9$, where
$$\begin{array}{ll}
h_1 = (2z_2-z_4)z_7 + 4z_1z_8, & h_6 = (2z_2+z_4)z_4^2 - 4z_6z_7,\\
h_2 = (2z_2+z_4)z_8 + 4z_3z_7, & h_7 = (2z_2-z_4)z_4^2 - 4z_5z_8,\\
h_3 = (2z_2+z_4)z_5 + 4z_1z_6, & h_8 = z_3z_4^2+z_6z_8,\\
h_4 = (2z_2-z_4)z_6 + 4z_3z_5, & h_9 = z_1z_4^2+z_5z_7.\\
h_5 = (2z_2+z_4)(2z_2-z_4) - 16z_1z_3,
\end{array}$$
\end{prop}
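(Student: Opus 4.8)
Since $Sl_2$ is reductive and $\II_\mu$ is an $Sl_2$-stable ideal, decomposing everything into isotypic components (equivalently, applying the Reynolds operator) yields
$$\kxC[\mu^{-1}(0)]^{Sl_2}=(\kxC[V]/\II_\mu)^{Sl_2}=\kxC[V]^{Sl_2}\big/\bigl(\kxC[V]^{Sl_2}\cap\II_\mu\bigr).$$
By Proposition~\ref{verdInv} the ring $\kxC[V]^{Sl_2}$ is generated by the thirteen listed invariants, so the plan is: (i) reduce those thirteen generators modulo $\II_\mu$, showing that five of them become polynomials in $z_1,\dots,z_8$; (ii) verify that $h_1,\dots,h_9$ lie in $\kxC[V]^{Sl_2}\cap\II_\mu$; and (iii) show that every relation among $z_1,\dots,z_8$ holding on $\mu^{-1}(0)$ already follows from $h_1,\dots,h_9$.

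For (i) the systematic tool is the defining identity $[A,B]=\tfrac12(y^tx)I_2-xy^t$, valid on $\mu^{-1}(0)$, together with the $2\times2$ trace identities $A^2=-\det(A)I_2$, $AB+BA=\tr(AB)I_2$ and cyclicity of the trace. For instance $y^tAx=\tr\!\bigl(A\,xy^t\bigr)=\tr\!\bigl(A(\tfrac12 z_4I_2-[A,B])\bigr)=-\tr(A[A,B])=0$, and likewise $y^tBx\equiv0\pmod{\II_\mu}$; writing $AB=\tfrac12 z_2 I_2+\tfrac12[A,B]$ and using $x^tJx=0$ gives $\det(x\midd ABx)\equiv0$ and, dually, $\det(y\midd(AB)^ty)\equiv0$; finally $y^tABx=\tfrac12 z_2z_4-\tr(AB[A,B])$ with $\tr(AB[A,B])=\tr((AB)^2)-\tr(A^2B^2)=z_2^2-4z_1z_3$, so $y^tABx\equiv\tfrac12 z_2z_4-z_2^2+4z_1z_3$. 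Hence $\kxC[\mu^{-1}(0)]^{Sl_2}$ is generated by $z_1,\dots,z_8$, and we get a surjection $\pi\colon\kxC[Z_1,\dots,Z_8]\twoheadrightarrow\kxC[\mu^{-1}(0)]^{Sl_2}$, $Z_i\mapsto z_i$. For (ii) one substitutes the explicit generators of $\II_\mu$ (or the identities above) into each $h_i(z_1,\dots,z_8)$ and checks it lands in $\II_\mu$; this is the routine half of the SINGULAR computation in the text and gives $(h_1,\dots,h_9)\subseteq\ker\pi$.

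The real content is (iii): $\ker\pi=(h_1,\dots,h_9)$. Factoring $\pi$ through $\Phi\colon\kxC[Z_1,\dots,Z_8]\to\kxC[V]/\II_\mu$, $Z_i\mapsto z_i$, one has $\ker\pi=\ker\Phi$, and $\ker\Phi$ is obtained by eliminating the coordinates of $V$ from the ideal $\II_\mu+(Z_1-z_1,\dots,Z_8-z_8)$ of $\kxC[V][Z_1,\dots,Z_8]$; the Gröbner-basis computation underlying the cited SINGULAR calculation returns exactly $(h_1,\dots,h_9)$. A computation-light alternative is to compare Hilbert series: one first checks that the three degree-$2$ generators of $\II_\mu$ form a regular sequence (equivalently $\dim\mu^{-1}(0)=\dim V-\dim\liesl_2=7$), so that the Koszul complex expresses the graded $Sl_2$-character of $\kxC[\mu^{-1}(0)]$ in terms of that of $\kxC[V]$; Weyl's integration formula over $SU(2)$ then yields the Poincaré series of $\kxC[\mu^{-1}(0)]^{Sl_2}$. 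Separately one computes the Hilbert series of $\kxC[Z_1,\dots,Z_8]/(h_1,\dots,h_9)$ from the start of a minimal free resolution---the nine $h_i$ are not a regular sequence, so this step is not formal. Agreement of the two series together with the surjection from (ii) forces $\ker\pi=(h_1,\dots,h_9)$.

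The main obstacle is precisely step (iii): proving that the nine relations generate the full relation ideal rather than merely sitting inside it. Both routes reduce it to a finite but genuine computation---an elimination Gröbner basis in $18$ variables, or the complete-intersection property of $\II_\mu$ together with enough of the (non-formal) resolution of $\kxC[Z_1,\dots,Z_8]/(h_1,\dots,h_9)$---and this is where a computer algebra system does the decisive work, steps (i) and (ii) being the structural reductions that make the computation meaningful.
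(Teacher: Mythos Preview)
Your proposal is correct and matches the paper's approach, which is nothing more than the line ``A calculation with SINGULAR shows that five of the original invariants can be omitted modulo $\II_{\mu}$, there remain eight invariants $z_1,\hdots,z_8$ with nine generating relations''; your steps (i)--(iii) are precisely an unpacking of what that computation does, and your by-hand elimination of $y^tAx$, $y^tBx$, $y^tABx$, $\det(x\mid ABx)$, $\det(y\mid(AB)^ty)$ via the $2\times 2$ trace identities is a detail the paper does not spell out. One cosmetic remark: in the chain $\tr(AB[A,B])=\tr((AB)^2)-\tr(A^2B^2)=z_2^2-4z_1z_3$ the two individual traces are $z_2^2-2z_1z_3$ and $2z_1z_3$, so only the difference, not each term separately, equals what you wrote.
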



The proposition shows that our quotient $V \sred Sl_2 = \mu^{-1}(0) \red Sl_2$ is the set of zeros $Z := \mathcal{V}(h_1,\hdots,h_9)$ in $\kxC^8$. 
Arranging the invariants as a matrix
$$M := \begin{pmatrix} 2z_2-z_4 & 4z_3 & z_8 \\ 4z_1 & 2z_2+z_4 & -z_7 \\ z_5 & -z_6 & \frac{1}{4}z_4^2\end{pmatrix}$$
gives a description of the relations as its $2\times 2$--minors. So our quotient consists of all matrices $M$ as above with rank at most $1$.
\vspace{1ex}

Further SINGULAR calculations combined with Jacobi's criterion show:
\begin{prop}
The quotient $Z$ is a variety of dimension $4$, whose singular locus is $Z_{sing} := \mathcal{V}(z_8,z_7,z_6,z_5,z_4, z_2^2-4z_1z_3) \in \kxC^8$.
\end{prop}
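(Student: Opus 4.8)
The plan is to exploit the determinantal picture already set up. The assignment $z \mapsto M$ identifies $\kxC^8$ with the hypersurface $H := \{\,16\,m_{33} - (m_{11}-m_{22})^2 = 0\,\} \subset \Mat_3(\kxC) \cong \kxC^9$: each $z_i$ is a linear combination of the entries $m_{kl}$ (explicitly $z_1 = \tfrac14 m_{21}$, $z_2 = \tfrac14(m_{11}+m_{22})$, $z_3 = \tfrac14 m_{12}$, $z_4 = \tfrac12(m_{22}-m_{11})$, $z_5 = m_{31}$, $z_6 = -m_{32}$, $z_7 = -m_{23}$, $z_8 = m_{13}$), so $z \mapsto M$ is a closed immersion, and its image is cut out by the single relation $m_{33} = \tfrac14 z_4^2 = \tfrac1{16}(m_{11}-m_{22})^2$. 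Under this identification the nine relations $h_1,\dots,h_9$ are, up to nonzero scalars, the nine $2\times2$-minors of $M$, so scheme-theoretically $Z \cong R_1 \cap H$, where $R_1 := \{M \in \Mat_3(\kxC) \mid \rk M \le 1\}$ is the affine cone over the Segre embedding $\prP^2 \times \prP^2 \hookrightarrow \prP^8$.

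First I would establish that $Z$ is an irreducible reduced variety of dimension $4$. The cone $R_1$ is an irreducible Cohen--Macaulay variety of dimension $5$, and $H$ does not contain it (e.g. $\diag(1,0,0) \in R_1 \setminus H$), so the defining equation of $H$ is a nonzerodivisor on $R_1$; hence $R_1 \cap H$ is Cohen--Macaulay of pure dimension $4$ by Krull's principal ideal theorem. For irreducibility, write a rank-$\le1$ matrix as $M = u\,v^t$; then $M \in H$ iff $f(u,v) := 16\,u_3 v_3 - (u_1 v_1 - u_2 v_2)^2 = 0$, and $f$ is irreducible (being of degree $1$ in $u_3$, any factor independent of $u_3$ divides both $16 v_3$ and $(u_1v_1-u_2v_2)^2$, hence is constant), so $\{f = 0\} \subset \kxC^6$ is irreducible and $Z$ is its image under $(u,v) \mapsto u\,v^t$. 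Reducedness will come out of the singular-locus computation below together with Serre's criterion, since Cohen--Macaulay plus regular in codimension $0$ forces reduced.

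Next, the singular locus. As $R_1$ is smooth away from the vertex $0$ and $Z = R_1 \cap H$ scheme-theoretically, a point $p = u\,v^t \ne 0$ is singular on $Z$ exactly when $H$ is tangent to $R_1$ there, i.e. when the differential of $f = 16\,m_{33} - (m_{11}-m_{22})^2$ annihilates $T_p R_1 = \{u'v^t + u\,v'^t\}$. Evaluating $16\,\delta m_{33} - 2(m_{11}-m_{22})(\delta m_{11}-\delta m_{22})$ on such a tangent vector and setting the coefficients of all $u'_i, v'_i$ to zero forces $u_3 = v_3 = 0$ and then $m_{11} = m_{22}$; translating via the formulas for the $z_i$ this says $z_4 = z_5 = z_6 = z_7 = z_8 = 0$, after which the condition $p \in R_1$ reduces to the lone remaining minor $z_2^2 - 4 z_1 z_3 = 0$. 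Conversely each such point lies on $Z$ (all $2\times2$-minors vanish and $m_{33} = 0 = (m_{11}-m_{22})^2$), and the vertex $0$ lies in this locus and is singular on $Z$ (every $h_i$ vanishes to order $\ge 2$ at the origin). Hence $Z_{sing} = \mathcal V(z_8, z_7, z_6, z_5, z_4,\, z_2^2 - 4 z_1 z_3)$, a $2$-dimensional quadric cone; in particular $Z$ is smooth on a dense open set, which supplies the missing $R_0$ above. Equivalently, one may forgo the cone picture and simply check by Jacobi's criterion that the $9 \times 8$ Jacobian $(\partial h_i / \partial z_j)$ drops below rank $4$ precisely on this locus.

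The only real labour is this rank-$4$ Jacobian, equivalently tangency, computation: laying out the nine partials of the $h_i$ and isolating where the rank falls is routine but fiddly, which is why a computer-algebra verification is convenient. The conceptual observation that tames it is that $Z$ is a hypersurface section of the familiar determinantal cone $R_1$, so ``singular'' becomes ``$H$ tangent to $R_1$''; the singularities beyond the vertex of $R_1$ arise precisely from the quadratic matrix entry $\tfrac14 z_4^2$, whose differential $\tfrac12 z_4\,dz_4$ degenerates along $\{z_4 = 0\}$--- which is exactly why $z_4$ shows up among the generators of $Z_{sing}$.
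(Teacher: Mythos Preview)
Your proof is correct and substantially more informative than the paper's, which does not argue the proposition at all: the paper simply asserts that ``further SINGULAR calculations combined with Jacobi's criterion'' yield the result. You instead exploit the determinantal description --- the matrix $M$, which the paper sets up immediately before the proposition but uses only descriptively --- to realise $Z$ as the hypersurface section $R_1 \cap H$ of the Segre cone. This buys you a hand-checkable argument: Cohen--Macaulayness of $R_1$ plus Krull give pure dimension $4$; irreducibility reduces to the easy factorisation check on $f(u,v) = 16u_3v_3 - (u_1v_1 - u_2v_2)^2$; and away from the vertex the singular locus is exactly the tangency locus of $H$ along the smooth part of $R_1$, which you read off by setting the coefficients of $u'_i$, $v'_j$ to zero. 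Your closing remark that one could instead run the $9\times 8$ Jacobian directly is precisely what the paper does via SINGULAR; your route explains \emph{why} the answer is the $A_1$-cone $z_2^2 = 4z_1z_3$ inside $\{z_4 = \cdots = z_8 = 0\}$, namely because the one nonlinear entry $m_{33} = \tfrac14 z_4^2$ forces the tangency condition to degenerate to $z_4 = 0$ together with the rank-one condition on the residual upper-left $2\times 2$ block.
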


\begin{rem}
The structure of $A_1$--singularity attracts attention at once: obviously the singular locus is $\mathcal{V}(z_2^2-4z_1z_3)$ in the subspace $\mathcal{V}(z_8,z_7,z_6,z_5,z_4) \cong \kxC^3$. In particular it is again singular in the origin. Thus $Z$ is stratified as
$Z \supset Z_{sing} \supset \{0\}$ with dimensions $4$, $2$, $0$ respectively. 
This phenomenon of each singular stratum being contained in the preceding one with even codimension is typical of symplectic varieties. Indeed $Z$ is one:
\end{rem}

\begin{prop}
\label{symplVar}
$Z$ is a symplectic variety.
\end{prop}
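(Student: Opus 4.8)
The plan is to push $Z$ through the criteria assembled in Section~\ref{sred}: first show that $\mu^{-1}(0)$ is a normal complete intersection, deduce that $Z=\mu^{-1}(0)\red Sl_2$ is normal, then use Proposition~\ref{complint} to equip $Z_{reg}$ with a symplectic form, and finally check that this form extends to a resolution.

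For normality of $\mu^{-1}(0)$ I would invoke the criterion of Kaledin, Lehn and Sorger \cite{kls:2006}. Here $V=\liesl_2\oplus\kxC^2\oplus\kxC^2\oplus\liesl_2$ and $\lieg=\liesl_2$, so $\ell=\dim V-\dim\lieg=7$, and it is enough to bound $\dim N\le\ell-2=5$, where $N$ is the locus of points whose $Sl_2$-stabilizer is not finite. A positive-dimensional subgroup of $Sl_2$ contains either a maximal torus or a one-parameter unipotent subgroup, each conjugate to the standard one, so $N$ is the union of the $Sl_2$-conjugates of the corresponding two fixed loci inside $\mu^{-1}(0)$. Both fixed loci are easily described: a point fixed by the diagonal torus has $x=y=0$ (any nonzero coordinate of $x$ or $y$ is scaled nontrivially) and $A,B$ diagonal, so that locus has dimension $2$ and, together with the $2$-dimensional family of conjugates of the torus, contributes at most $4$ to $N$; a point fixed by the standard unipotent has $A,B$ proportional to a fixed nilpotent matrix and $x$, $y$ on complementary coordinate axes, and then $\mu=0$ forces the product of their remaining coordinates to vanish, so that locus has dimension $3$ and, together with the $1$-dimensional family of conjugates of the unipotent, again contributes at most $4$. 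Hence $\dim N\le 4\le\ell-2$, and the criterion gives that $\mu^{-1}(0)$ is a normal complete intersection of dimension $7$; consequently $Z$, a GIT quotient of a normal affine variety by a reductive group, is normal.

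Proposition~\ref{complint} then applies once its two remaining hypotheses are checked. Because the differential of a moment map is surjective at $x$ exactly when the stabilizer subalgebra $\lieg_x$ vanishes, the complete intersection $\mu^{-1}(0)$ is smooth precisely off $N$, i.e. $\mu^{-1}(0)_{sing}=N$. Any nontrivial finite stabilizer contains a semisimple element, and the fixed-point analysis above shows that such an element can only fix points with $x=y=0$ and $A,B$ simultaneously diagonalizable, all of which already lie in $N$; hence the isotropy group of every point of $\mu^{-1}(0)_{reg}$ is in fact trivial. Finally, evaluating the invariants $z_1,\dots,z_8$ along $N$ yields $z_4=\dots=z_8=0$ together with $z_2^2=4z_1z_3$ on the torus part (and only the origin on the unipotent part), so $\mu^{-1}(0)_{sing}\red Sl_2=Z_{sing}$; in particular the inclusion required by Proposition~\ref{complint} holds, and we obtain the symplectic structure on $Z_{reg}$.

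The step I expect to be the real obstacle is extending this $2$-form to a resolution. By the Remark preceding this statement, the transverse singularity type of $Z$ along the generic point of the $2$-dimensional stratum $Z_{sing}$ is an $A_1$-surface singularity; resolving this family --- by blowing up $Z_{sing}$, or by taking the simultaneous minimal resolution of the $A_1$-slices --- produces a partial resolution that is an isomorphism over $Z_{reg}$ and crepant over the generic point of $Z_{sing}$, so that the $2$-form extends and stays non-degenerate there, while the remaining non-isomorphism locus maps into the single point $0\in Z$, which has codimension $4$. Resolving the rest and applying Proposition~\ref{cohalbsympl} with $F=\{0\}$ then shows the resulting semismall resolution is even symplectic, which a fortiori proves that $Z$ is a symplectic variety. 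A cleaner alternative, which avoids tracking the form over the deepest stratum by hand, is to observe that $Z$ is Cohen--Macaulay by the Hochster--Roberts theorem, hence Gorenstein since its dualizing sheaf is reflexive and trivial on $Z_{reg}$ while $\codim Z_{sing}=2$, and then to invoke Namikawa's extension theorem for $2$-forms --- which applies provided $Z$ has rational singularities, and this follows from Boutot's theorem once one knows the complete intersection $\mu^{-1}(0)$ is rational. Deciding which of these routes actually goes through, in particular whether the partial resolution is tame enough for Proposition~\ref{cohalbsympl}, is the crux.
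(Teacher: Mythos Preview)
Your proposal follows the same overall architecture as the paper's proof: verify the hypotheses of Proposition~\ref{complint} to obtain the symplectic form on $Z_{reg}$, then extend it over a resolution via Proposition~\ref{cohalbsympl}. The execution of the individual steps differs. For normality of $\mu^{-1}(0)$ and the identification $\mu^{-1}(0)_{sing}\red Sl_2 = Z_{sing}$ the paper simply invokes a SINGULAR computation rather than your hands-on KLS dimension count; your argument here is correct and more conceptual, though you should be more careful with the central element $-I_2$, which fixes every $(A,0,0,B)$ regardless of whether $A,B$ are diagonalizable --- these points still lie in $N$ because a commuting pair $A,B$ in $\liesl_2$ always admits a one-dimensional stabilizer, but the reason is not quite what you wrote. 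For trivial isotropy on $\mu^{-1}(0)_{reg}$ the paper takes a shorter route than yours: since regular points avoid $\{x=y=0\}$, one may assume $x\neq 0$, and then the stabilizer of $x$ alone in $Sl_2$ is already a unipotent one-parameter group, so any finite subgroup of it is trivial.

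For the extension step you correctly identify what the paper actually does as your Option~A: it blows up $Z$ along $Z_{sing}$, checks chart by chart that the result $\widetilde Z$ is smooth, computes all the fibres over the three strata to verify semismallness, and then applies Proposition~\ref{cohalbsympl} with $F=\{0\}$. The ``crux'' you flag --- that one must actually know the resolution is semismall --- is thus settled in the paper by direct calculation in the propositions immediately following the statement. Your Option~B via Hochster--Roberts, Boutot, and Namikawa's extension theorem is a genuinely different route to the bare statement that $Z$ is a symplectic variety, and it does go through; the paper's explicit blow-up buys more, namely the concrete symplectic resolution and its fibre structure that are needed afterwards for the Mukai-flop analysis, which is why the paper proceeds that way.
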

\begin{proof}
With the help of SINGULAR we see that $\mu^{-1}(0)$ is a normal complete intersection and we compute $\mu^{-1}(0)_{sing}\red Sl_2 = Z_{sing}$. The singular locus $\mu^{-1}(0)_{sing}$ is given by the equations $y^tx=0$, $Ax=0$, $Bx=0$, $y^tA=0$, $y^tB=0$ and $AB-BA=0$. Let $(A,x,y, B) \in \mu^{-1}(0) \setminus \mu^{-1}(0)_{sing}$. If $x = 0$ and $y = 0$, then $AB-BA = 0$ because of the defining equation $AB-BA + xy^t - \frac{1}{2}y^txI_2 = 0$ of $\mu^{-1}(0)$, and therefore $(A,x,y,B) \in \mu^{-1}(0)_{sing}$. So w.l.o.g. we can assume $x \neq 0$. Complete $x$ to a basis $(x,x')$ of $\kxC^2$. If $g \in Sl_2$ stabilises $x$, it must have the shape $g = \left(\begin{smallmatrix} 1 & s \\ 0 & t \end{smallmatrix}\right)$ with respect to the basis $(x,x')$. But $\det g = 1$ implies $t = 1$. Then for every integer $n$ we have $g^n = \left(\begin{smallmatrix} 1 & s^n \\ 0 & 1 \end{smallmatrix}\right)$. So $g$ has infinite order unless $s = 0$. But the isotropy group of $(A,x,y,B)$ is finite, because $\mu^{-1}(0)$ is a complete intersection, so the isotropy group must even be trivial. 
By proposition \ref{complint} it only remains to construct a resolution $\pi: \widetilde{Z} \to Z$ such that the symplectic form on $\pi^{-1}(Z_{reg})$ extends to $\widetilde{Z}$. This will be done next.
\end{proof}

Now we describe a resolution of $Z$ via blowing up the singular locus once. 
At first the following observation simplifies the process of blowing up enormously:
Let us once and for all identify $\kxC^3 = \mathcal{V}(z_4,z_5,z_6,z_7,z_8) \subset \kxC^8$. The fact $Z \cap \kxC^3 = Z_{sing}$ implies that the blow up of $Z$ in $Z_{sing}$ is the strict transform of the blow up $\pi\colon Bl_{\kxC^3}(\kxC^8) \to \kxC^8$.
Now by an easy computation the blow up of $Z$ in $Z_{sing}$ is
\begin{align*}
\widetilde{Z} := Bl_{Z_{sing}}(Z) = \{ ((z_1,\hdots,z_8),[&x_4:x_5:x_6:x_7:x_8]) \in \kxC^8 \times \mathbbm{P}^4 \mid \\
 &z_ix_j = z_jx_i, \; i, j = 4,\hdots,8, \; \ell_1 = \hdots = \ell_9 = 0 \}.
\end{align*}

Here the $\ell_i$ are defined by $\rk \widetilde M \leq 1$ and $\widetilde M$ is obtained by substituting $z_i$ by the corresponding $x_i$ in the last row and column of $M$.

\begin{prop}
The blow up $\pi\colon \widetilde{Z} \to Z$ is a resolution of singularities.
\end{prop}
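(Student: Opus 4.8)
The plan is to verify the three properties that make $\pi$ a resolution of singularities: that it is proper, that it is birational and restricts to an isomorphism over the regular locus $Z_{reg}=Z\setminus Z_{sing}$, and that the total space $\widetilde Z$ is smooth.

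Properness is immediate from the explicit model already given: $\widetilde Z$ is a closed subvariety of $\kxC^8\times\mathbbm{P}^4$ and $\pi$ is the restriction of the projection onto the first factor, hence projective. Birationality, and the fact that $\pi$ is an isomorphism over $Z_{reg}$, are formal consequences of the blow-up construction: $\widetilde Z=\Bl_{Z_{sing}}(Z)$ is an isomorphism over the complement of its centre $Z_{sing}$, and $Z_{sing}$ has dimension $2$, hence is nowhere dense in the irreducible $4$-dimensional variety $Z$; since $Z_{sing}$ is exactly the singular locus of $Z$, this complement equals $Z_{reg}$. In particular $\widetilde Z$ is irreducible of dimension $4$.

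The substantial part is smoothness of $\widetilde Z$. I would check this on the five standard affine charts $U_i=\{x_i\neq 0\}$, $i=4,\dots,8$, of $\mathbbm{P}^4$, which cover $\widetilde Z$ because at every point of $\mathbbm{P}^4$ at least one homogeneous coordinate is nonzero. On $U_i$ one sets $x_i=1$, so that the blow-up relations $z_kx_l=z_lx_k$ turn into $z_k=z_ix_k$ for the remaining indices $k\in\{4,\dots,8\}$, and the chart sits inside the affine space with coordinates $z_1,z_2,z_3,z_i$ and the $x_k$ with $k\neq i$; there $\widetilde Z\cap U_i$ is cut out by the nine $2\times 2$--minors $\ell_1,\dots,\ell_9$ of $\widetilde M$, rewritten in these coordinates. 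An elementary computation then shows that in each chart these nine equations allow one to express four of the eight coordinates as polynomials in the remaining four, so that $\widetilde Z\cap U_i\cong\kxC^4$; for instance in the chart $x_8\neq 0$ the minors coming from the first two columns of $\widetilde M$ eliminate $z_1$ and $z_2$ while those involving its third column eliminate $x_5$ and $x_6$, whereas in the chart $x_4\neq 0$ the entry $\tfrac{1}{4}x_4^2$ of $\widetilde M$ is a unit, which forces $\widetilde M$ to be a rank-one product and eliminates $z_1,z_2,z_3,z_4$. One still has to verify that the minors not used in the elimination hold identically after the substitutions; this is the step where care is needed. Once this is done, $\widetilde Z$ is covered by affine charts isomorphic to $\kxC^4$, hence smooth. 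Alternatively one could invoke Jacobi's criterion chart by chart, but exhibiting the isomorphisms with $\kxC^4$ is cleaner.

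Assembling the pieces, $\pi\colon\widetilde Z\to Z$ is a projective birational morphism from a smooth irreducible variety that restricts to an isomorphism over $Z_{reg}$, i.e. a resolution of singularities. The main obstacle is the bookkeeping in the smoothness verification: besides the mixed occurrence of $z_4$ and $x_4$ in $\widetilde M$, the elimination is not entirely uniform across the five charts, and confirming that the unused minors become identically zero after substituting the blow-up relations is where mistakes are most likely.
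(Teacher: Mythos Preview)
Your proposal is correct and follows essentially the same route as the paper: the paper also verifies smoothness chart by chart, showing in the chart $x_4=1$ that four of the nine equations eliminate $z_1,z_2,z_3,z_4$ in terms of $x_5,x_6,x_7,x_8$ while the remaining five become identically zero, so the chart is $\kxC^4$; the other charts are declared analogous. Your treatment is slightly more complete in that you spell out properness and birationality, which the paper leaves implicit, and you note that the presence of a unit entry in $\widetilde M$ on each chart is what drives the elimination.
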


\begin{proof}
Let us exemplarily deal with the chart $x_4 = 1$. The strict transform is the set of zeros of the polynomials
$$\begin{array}{ll}
(2z_2-z_4)x_7 + 4z_1x_8, & (2z_2+z_4) - 4x_6x_7,\\
(2z_2+z_4)x_8 + 4z_3x_7, & (2z_2-z_4) - 4x_5x_8,\\
(2z_2+z_4)x_5 + 4z_1x_6, & z_3+x_6x_8,\\
(2z_2-z_4)x_6 + 4z_3x_5, & z_1+x_5x_7.\\
(2z_2+z_4)(2z_2-z_4) - 16z_1z_3,
\end{array}$$
The four equations on the right give $z_1 = -x_5x_7$, $z_2 = x_6x_7 + x_5x_8$, $z_3 = -x_6x_8$, $z_4 = 2(x_6x_7 - x_5x_8).$
Inserting this into the five remaining equations yields
$$\begin{array}{ll}
(4x_5x_8)x_7 + 4(-x_5x_7)x_8 = 0, & (4x_6x_7)x_5 + 4(-x_5x_7)x_6 = 0,\\
(4x_6x_7)x_8 + 4(-x_6x_8)x_7 = 0, & (4x_5x_8)x_6 + 4(-x_6x_8)x_5 = 0,\\
(4x_6x_7)(4x_5x_8) - 16(-x_5x_7)(-x_6x_8) = 0.
\end{array}$$
Thus the strict transform is smooth, and the same holds for $\widetilde{Z}$ because one can treat the other charts $x_5 = 1,\; x_6 = 1,\; x_7 = 1$ and $x_8 = 1$ in a similar way.
\end{proof}

Our next interest concerns the fibres of the resolution, because they indicate if $\pi$ is semismall. This is a property needed to prove symplecticity.
\begin{prop}
\label{fasern1}
The fibre $\pi^{-1}(z)$ of the resolution $\pi: \widetilde{Z} \to Z$ is
\begin{itemize}
\item a point if $z \in Z \setminus Z_{sing}$,
\item isomorphic to $\prP^1$ if $z \in Z_{sing} \setminus \{0\}$,
\item the union of two projective planes $E_1 \cup E_2$ intersecting in one point if $z = 0$.
\end{itemize}
\end{prop}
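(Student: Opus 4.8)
The plan is to read each fibre directly off the explicit equations of $\widetilde{Z}$ --- the incidence relations $z_ix_j=z_jx_i$ (which only say that we sit in $Bl_{\kxC^3}(\kxC^8)$) together with $\rk\widetilde{M}\le 1$. Over $Z\setminus Z_{sing}$ there is nothing to do: a blow up is an isomorphism away from its centre, so $\pi$ is an isomorphism there and $\pi^{-1}(z)$ is a single reduced point --- the first bullet. For the other two cases I would fix a point $z\in Z_{sing}$; since $Z_{sing}\subset\kxC^3=\mathcal{V}(z_4,\dots,z_8)$ we have $z_4=0$, the fibre of $Bl_{\kxC^3}(\kxC^8)\to\kxC^8$ over $z$ is the whole exceptional $\prP^4$ with coordinates $[x_4:\dots:x_8]$, the incidence relations over $z$ become trivial, and hence $\pi^{-1}(z)$ is the locus in this $\prP^4$ cut out by $\rk\widetilde{M}\le 1$ for
$$\widetilde{M}=\begin{pmatrix} 2z_2 & 4z_3 & x_8\\ 4z_1 & 2z_2 & -x_7\\ x_5 & -x_6 & \tfrac{1}{4}x_4^2\end{pmatrix}.$$

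If $z=0$, all the $z_i$ vanish, so the only $2\times2$ minors of $\widetilde{M}$ that are not identically zero are, up to sign, $x_5x_7$, $x_5x_8$, $x_6x_7$, $x_6x_8$. Hence $\pi^{-1}(0)=\mathcal{V}(x_5,x_6)\cup\mathcal{V}(x_7,x_8)\subset\prP^4$, the union of the plane $E_1:=\mathcal{V}(x_7,x_8)\cong\prP^2$ (coordinates $[x_4:x_5:x_6]$) and the plane $E_2:=\mathcal{V}(x_5,x_6)\cong\prP^2$ (coordinates $[x_4:x_7:x_8]$), and these two planes meet exactly in $\{x_5=x_6=x_7=x_8=0\}$, i.e. in the single point $[1:0:0:0:0]$. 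This is the third bullet.

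If $z\in Z_{sing}\setminus\{0\}$, so $(z_1,z_2,z_3)\ne 0$ with $z_2^2=4z_1z_3$, the $2\times2$ minors split into two groups. The minors not involving the entry $\tfrac{1}{4}x_4^2$ are linear in the $x_i$: they force $(x_7,x_8)$ into the kernel of $\bigl(\begin{smallmatrix} z_2 & 2z_1\\ 2z_3 & z_2\end{smallmatrix}\bigr)$ and $(x_5,x_6)$ into the kernel of $\bigl(\begin{smallmatrix} 2z_3 & z_2\\ z_2 & 2z_1\end{smallmatrix}\bigr)$. Both matrices have determinant $z_2^2-4z_1z_3=0$ and are nonzero, hence of rank $1$, so $(x_5,x_6)$ and $(x_7,x_8)$ are each confined to a fixed line; writing $(x_5,x_6)=s(a_1,a_2)$ and $(x_7,x_8)=t(b_1,b_2)$ we get that $\pi^{-1}(z)$ lies in a linear $\prP^2\subset\prP^4$ with homogeneous coordinates $[x_4:s:t]$. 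The remaining minors then all reduce on this $\prP^2$ to a single equation $x_4^2=c\,st$, and one checks $c\ne 0$: since $z_2^2=4z_1z_3$ forces $z_1\ne 0$ or $z_3\ne 0$ (if both vanish, $z_2=0$ and $z=0$), assume say $z_1\ne 0$; then the kernel generators may be taken as $(a_1,a_2)=(b_1,b_2)=(2z_1,-z_2)$ and the minor $z_1x_4^2+x_5x_7$ becomes $z_1x_4^2+4z_1^2st$, so $c=-4z_1\ne 0$ (symmetrically if only $z_3\ne 0$). Hence $\pi^{-1}(z)$ is a smooth conic in $\prP^2$, i.e. $\cong\prP^1$, the second bullet.

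The step that needs genuine care is this last one: checking that the conic over a point of $Z_{sing}\setminus\{0\}$ is nondegenerate (of rank $3$), so that it really is $\prP^1$ rather than a pair of lines --- this is exactly why the small case distinction on which of $z_1,z_3$ is nonzero cannot be avoided. As a cross-check, or an alternative that avoids tracking the constants, one can combine the linear reduction $\pi^{-1}(z)\subset\prP^2$ above with the smoothness of $\widetilde{Z}$ and the properness and birationality of $\pi$ (which make the fibres connected, by Zariski's main theorem) and with the computation in the chart $x_4=1$ --- where the fibre is the torus $\kxC^*$, since a rank-$1$ factorisation $\bigl(\begin{smallmatrix}-z_1 & z_2/2\\ z_2/2 & -z_3\end{smallmatrix}\bigr)=\binom{x_5}{x_6}\,(x_7\;x_8)$ is unique up to $(x_5,x_6,x_7,x_8)\mapsto(tx_5,tx_6,t^{-1}x_7,t^{-1}x_8)$ --- which together pin the closure of the fibre down to $\prP^1$.
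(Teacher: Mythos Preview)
Your proof is correct. The approach for the first and third bullets is essentially the same as the paper's, but for the middle case ($z\in Z_{sing}\setminus\{0\}$) you take a genuinely cleaner route.

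The paper proceeds by explicit parametrisation: it splits into the cases $z_2\neq 0$, then $z_2=0$ with $z_1\neq 0$ or $z_3\neq 0$, works separately in the charts $x_4\neq 0$ and $x_4=0$, and in each case writes down the fibre as $\{[ab:\alpha b^2:\beta b^2:\gamma a^2:\delta a^2]\mid [a:b]\in\prP^1\}$ with explicit constants. Your argument instead observes that the five minors of $\widetilde M$ not touching the $(3,3)$--entry are linear in the $x_i$ and force $(x_5,x_6)$ and $(x_7,x_8)$ each into the one-dimensional kernel of a rank-one $2\times 2$ matrix, so the fibre sits in a linear $\prP^2\subset\prP^4$; the four remaining minors then all become scalar multiples of a single conic $x_4^2+cst=0$ with $c\neq 0$. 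This avoids the chart-by-chart bookkeeping and the subdivision by $z_2$, at the cost of not producing the explicit parametrisations (which the paper does not use later anyway). One small point: your labelling of $E_1$ and $E_2$ is swapped relative to the paper's convention --- the paper sets $E_1=\{[x_4:0:0:x_7:x_8]\}$ and $E_2=\{[x_4:x_5:x_6:0:0]\}$ --- which is harmless for this proposition but matters for the subsequent discussion of the Mukai flop and the partial resolution $Y$.
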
 
\begin{proof}
Let $z = (z_1, \hdots, z_8) \in Z$. If $z \not\in Z_{sing} = Z \cap \kxC^3$ then $z_j \neq 0$ for one $j \in \{4,\hdots,8\}$. So we have $x_i = \frac{z_i}{z_j}x_j$ in the preimage, which is therefore fixed to be $\pi^{-1}(z) = \{((z_1, \hdots, z_8),[z_4:z_5:z_6:z_7:z_8])\}$.

Let now $z \in Z_{sing}$. Then we have $z_4 = z_5 = z_6 = z_7 = z_8 = 0$ and $z_2^2 = z_1z_3$.
The relations $\ell_i = 0, \; i = 1, \hdots, 9$, reduce to
$\ell_1 = z_2x_7 \pl 2z_1x_8$, $\ell_2 = z_2x_8 \pl 2z_3x_7$, $\ell_3 = z_2x_5 \pl 2z_1x_6$, $\ell_4 = z_2x_6 \pl 2z_3x_5$, $\ell_5 = 0$, $\ell_6 = z_2x_4^2 \mi 2x_6x_7$, $\ell_7 = z_2x_4^2 \mi 2x_5x_8$, $\ell_8 = z_3x_4^2\pl x_6x_8$, $\ell_9 = z_1x_4^2\pl x_5x_7$.

To begin, we consider the case $z_2 \neq 0$. 
If $x_4 \neq 0$, the equations $\ell_6 = \hdots = \ell_9 = 0$ yield
$z_2 = 2x_6x_7 = 2x_5x_8$, $z_3= -x_6x_8$, $z_1 = -x_5x_7$. 
Inserting this, $\ell_1, \hdots, \ell_4$ are automatically fulfilled.
The assumption $z_2 \neq 0$ implies $x_i \neq 0$ for $i = 5,\, 6,\, 7,\, 8$. Setting $x_8 =: t =: \frac{a}{b} \in \kxC^*$, we obtain $ x_5 = \frac{z_2}{2t}$, $x_6 = \frac{-z_3}{t}$, $x_7 = \frac{-z_1}{x_5} = \frac{-2z_1}{z_2}t$, so 
$[x_4:x_5:x_6:x_7:x_8] = 
[t:\frac{z_2}{2}:-z_3:\frac{-2z_1}{z_2}t^2:t^2] = [ab:\frac{z_2}{2}b^2:-z_3b^2:\frac{-2z_1}{z_2}a^2:a^2]$.

If $x_4 = 0$, then $\ell_6, \hdots, \ell_9$ mean $x_5 = 0 = x_6$ or $x_7 = 0 = x_8$. In the first case $\ell_1$ yields $x_7 = -\frac{2z_1}{z_2}x_8$. Setting $t = x_8$ we obtain $[0:0:0:\frac{-2z_1}{z_2}t:t]$, which is exactly the above element in the case $b = 0$. Alike, in the second case $\ell_3$ yields $x_5 = -\frac{2z_1}{z_6}x_6$. Setting $x_6 = -z_3s$ leads to $x_5 = \frac{2z_1z_3}{z_6}s = \frac{z_2}{2}s$, which is again compatible with the above element for $s = \frac{b}{a}$. 
Altogether the fibre is parametrised by $\prP^1$:
$$\pi^{-1}(z) = \{((z_1, \hdots, z_8),[ab:\tfrac{z_2}{2}b^2:-z_3b^2:\tfrac{-2z_1}{z_2}a^2:a^2]) \mid (a,b) \in \kxC^2 \setminus\{(0,0)\} \}.$$

In the case $z_2 = 0$ the condition $z_2^2 = 4z_1z_3$ yields $z_1 = 0$ or $z_3 = 0$. 
If $z_1 \neq 0$ and $x_4 = 1$ then $z_1 = -x_5x_7$ implies $x_5 \neq 0$ and $x_7 \neq 0$. Thus the conditions on $z_1$ and $z_2$ become $x_6 = x_8 = 0$ and $x_7 = -\frac{z_1}{x_5}$, so with $t = x_5$ we obtain $[1:t:0:-\frac{z_1}{t}:0] = [ab:a^2:0:-z_1b^2:0]$. 
If $x_4 = 0$ then $\ell_1$ yields $x_8 = 0$, $\ell_3$ affords $x_6 = 0$ and $\ell_9$ implies $x_5 = 0$ or $x_7 = 0$.
On the whole the fibre
$$\pi^{-1}(z) = \{((z_1, \hdots, z_8),[ab:a^2:0:-z_1b^2:0]) \mid (a,b) \in \kxC^2 \setminus\{(0,0)\} \},$$
is as well isomorphic to $\prP^1$. 
Analogously in the case $z_3 \neq 0$
we obtain the fibre
$$\pi^{-1}(z) = \{((z_1, \hdots, z_8),[ab:0:-z_3b^2:0:a^2]) \mid (a,b) \in \kxC^2 \setminus\{(0,0)\} \},$$
again parametrised by $\prP^1$. 
This proves the second part of the assertion.

Finally we deal with the case $z = 0$. Here the equations $\ell_1,\hdots,\ell_5$ do not provide any information, whereas  $\ell_6,\hdots,\ell_9$ yield $ x_6x_7 = 0$, $x_5x_8 = 0$, $x_6x_8 = 0$, $x_5x_7 = 0$, equivalently as equation of ideals $(x_5,x_6)(x_7,x_8) = 0.$ 
Each of the two ideals defines a projective plane $E_1 := \{[x_4:0:0:x_7:x_8]\}$ resp. $E_2 := \{[x_4:x_5:x_6:0:0]\}$, whose union constitutes the zero fibre $\pi^{-1}(0)$. They intersect in the single point $[1:0:0:0:0]$.
\end{proof}

\begin{prop}
The resolution $\pi: \widetilde{Z} \to Z$ is semismall.
\end{prop}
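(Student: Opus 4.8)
The plan is to combine the complete description of the fibres obtained in Proposition \ref{fasern1} with the stratification $Z = (Z \setminus Z_{sing}) \sqcup (Z_{sing} \setminus \{0\}) \sqcup \{0\}$ into locally closed pieces along each of which the fibre dimension of $\pi$ is constant. Recall that the resolution $\pi\colon \widetilde Z \to Z$ is \emph{semismall} precisely when, for every $i \geq 0$, the locus $\{z \in Z \mid \dim \pi^{-1}(z) \geq i\}$ has codimension at least $2i$ in $Z$; when $Z$ carries a finite stratification by locally closed subsets along which $\dim \pi^{-1}(z)$ is constant, this amounts to checking $\codim_Z S \geq 2\dim\pi^{-1}(z)$ for each stratum $S$ and every $z \in S$. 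Equivalently, one may verify $\dim(\widetilde Z \times_Z \widetilde Z) \leq \dim \widetilde Z = 4$.

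First I would record the relevant dimensions. We have $\dim Z = 4$, and since $Z_{sing} = \mathcal{V}(z_8,z_7,z_6,z_5,z_4,\, z_2^2-4z_1z_3)$ is a hypersurface inside $\mathcal{V}(z_4,\dots,z_8) \cong \kxC^3$, it has dimension $2$, while $\{0\}$ has dimension $0$. The three subsets above are locally closed and cover $Z$; by Proposition \ref{fasern1} the fibre of $\pi$ is a point over $Z \setminus Z_{sing}$, is isomorphic to $\prP^1$ over $Z_{sing}\setminus\{0\}$, and is $E_1 \cup E_2$ (two copies of $\prP^2$ meeting in one point, hence of dimension $2$) over the origin.

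The three inequalities are then immediate: over $Z \setminus Z_{sing}$ the fibre has dimension $0$ and the codimension is $0$; over $Z_{sing}\setminus\{0\}$ the fibre has dimension $1$ and $\codim_Z Z_{sing} = 4-2 = 2$; over $\{0\}$ the fibre has dimension $2$ and $\codim_Z\{0\} = 4$. In each case $\codim_Z S \geq 2\dim\pi^{-1}(z)$, with equality on the two deeper strata, so $\pi$ is semismall.

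Since all the real work — the determination of the fibres — is already contained in Proposition \ref{fasern1}, there is no substantial obstacle here; the only points needing a little attention are confirming $\dim Z_{sing} = 2$, so that the middle stratum has codimension exactly $2$ and the bound is tight, and, if one wishes to be fully scrupulous, observing that semismallness may indeed be tested on this particular finite stratification because its pieces are locally closed with constant fibre dimension.
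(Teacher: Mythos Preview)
Your proof is correct and follows essentially the same approach as the paper: both use the stratification $Z \supset Z_{sing} \supset \{0\}$, read off the fibre dimensions $0,1,2$ from Proposition~\ref{fasern1}, and verify the semismallness inequality $\dim\pi^{-1}(z) \leq \tfrac{1}{2}\codim_Z S$ on each stratum. Your version is slightly more explicit about the definition and about why the stratification suffices, but the argument is the same.
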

\begin{proof} According to the computations of the fibres we have
\begin{itemize}
\item $x \in Z\setminus Z_{sing}: \quad \dim \pi^{-1}(x) = 0 = \frac{1}{2}\codim Z$,
\item $x \in Z_{sing} \setminus \{0\}: \quad \dim \pi^{-1}(x) = 1 = \frac{1}{2}\codim Z_{sing}$,
\item $x = 0: \quad \dim \pi^{-1}(0) = 2 = \frac{1}{2}\codim\{0\}$,
\end{itemize}
so for each stratum $Z \supset Z_{sing} \supset \{0\}$ the defining inequality for semismallness is satisfied.
\end{proof}

\begin{prop}
The resolution $\pi: \widetilde{Z} \to Z$ is symplectic.
\end{prop}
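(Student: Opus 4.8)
The plan is to invoke Proposition \ref{cohalbsympl}, which reduces symplecticity to three things: that $\widetilde Z$ is smooth with an extending symplectic form on the regular locus (this is established — $\pi$ is a resolution of the symplectic variety $Z$ by Propositions \ref{symplVar} and by the smoothness proof), that $\pi$ is semismall (just proved), and that away from a codimension $\geq 4$ closed subset $F$ the restriction of $\pi$ is already a symplectic resolution. So the real work is to exhibit such an $F$ and verify symplecticity over its complement.

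The natural choice is $F = \{0\}$, which has codimension $4$ in the $4$-dimensional variety $Z$. Over $Z \setminus \{0\}$ the resolution $\pi$ restricts to $\widetilde Z \setminus \pi^{-1}(0) \to Z \setminus \{0\}$, and I would argue this is a symplectic resolution as follows. Over the open dense $Z \setminus Z_{sing}$ the map is an isomorphism onto the smooth locus, so the symplectic form is automatically non-degenerate there. It remains to check non-degeneracy of the extended form $\sigma$ on $\pi^{-1}(Z_{sing} \setminus \{0\})$, which by Proposition \ref{fasern1} is a $\prP^1$-bundle over $Z_{sing} \setminus \{0\}$, hence a smooth $3$-dimensional subvariety of $\widetilde Z$ — but $\widetilde Z$ is $4$-dimensional, so this exceptional locus has codimension $1$. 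This is the subtle point: a priori a symplectic form could degenerate along a divisor, and here the exceptional divisor over $Z_{sing}\setminus\{0\}$ is exactly such a divisor. To rule this out I would work in the explicit charts from the proof that $\pi$ is a resolution: in the chart $x_4 = 1$ one has the global coordinates $x_5, x_6, x_7, x_8$ (with $z_1 = -x_5x_7$, $z_2 = x_6x_7 + x_5x_8$, $z_3 = -x_6x_8$, $z_4 = 2(x_6x_7 - x_5x_8)$ and the remaining $z_i$ determined), and I would pull back the symplectic form and compute its determinant (a Pfaffian, since the form is a $4\times 4$ skew matrix of functions) directly, checking it is a nonzero constant — or at worst a unit — on the locus over $Z \setminus \{0\}$. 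The symplectic form on $Z_{reg}$ is the one descending from the Liouville form on $V = \liesl_2 \oplus \kxC^2 \oplus \kxC^2 \oplus \liesl_2$ via symplectic reduction (Proposition \ref{satzsymplStr}), so one has a concrete handle on it through the invariants $z_1,\dots,z_8$.

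The main obstacle I anticipate is precisely this chart computation: writing down $\sigma$ in terms of the $z_i$ (equivalently in terms of $A, x, y, B$ modulo $\II_\mu$ and $Sl_2$), pulling it back along the explicit blow-up parametrisation, and confirming that $\det\sigma$ (or $\Pf\sigma$) has no zeros over $Z \setminus \{0\}$. Once that is done, Proposition \ref{cohalbsympl} with $F = \{0\}$ applies immediately — semismallness guarantees $\pi^{-1}(0)$ has codimension $\geq 2$ in $\widetilde Z$ (indeed it is $2$-dimensional in the $4$-fold, by Proposition \ref{fasern1}), so the determinant of the extended form cannot vanish on a divisor, and $\sigma$ is everywhere non-degenerate. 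An alternative, possibly cleaner route that avoids the worst of the computation: show directly that the extended $\sigma$ is non-degenerate on the open set $\pi^{-1}(Z \setminus \{0\})$ by a dimension count on the exceptional $\prP^1$-bundle combined with the fact that $\sigma$ restricts non-degenerately to the tangent spaces transverse to the fibres (using that $\pi$ over $Z_{sing}\setminus\{0\}$ is, fibrewise, a minimal resolution of a surface $A_1$-singularity, on which the symplectic form is standard), but I would still fall back on the explicit charts to make this rigorous. Either way the conclusion follows from Proposition \ref{cohalbsympl}.
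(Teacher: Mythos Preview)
Your overall strategy coincides with the paper's: take $F = \{0\}$ (codimension $4$), verify that $\pi$ is symplectic over $Z \setminus \{0\}$, and then invoke Proposition~\ref{cohalbsympl} together with semismallness. The difference is only in how the middle step is handled. The paper dispatches it in one line by citing the well-known fact that the blow-up of a surface $A_1$-singularity is a symplectic resolution --- so the symplectic form on $Z_{reg}$ automatically extends non-degenerately to $\pi^{-1}(Z_{sing}\setminus\{0\})$. You mention this as your ``alternative, possibly cleaner route,'' but it is in fact the entire argument; no chart computation is needed. Your primary plan of pulling back $\sigma$ explicitly in the chart $x_4=1$ and checking that its Pfaffian is a unit would also work, but it is considerably more labour than necessary and risks getting tangled in tracing the reduced form through the invariants $z_1,\dots,z_8$. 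Promote your alternative to the main argument and you have exactly the paper's proof.
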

\begin{proof}
It is well-known that any symplectic form extends to the blow up of an $A_1$--singularity, so the symplectic form of $Z$ extends to the preimage of $Z_{sing}\setminus\{0\}$. Thus it is not defined at most over the stratum $\{0\}$ of codimension $4$. But as the resolution is semismall proposition \ref{cohalbsympl} can be applied, which says that the symplectic form extends symplectically everywhere.
\end{proof}

As projective planes the two components $E_1$ and $E_2$ of the zero fibre constitute a configuration admitting a Mukai flop in the four dimensional variety $\widetilde{Z}$.

\begin{mind}
Let $(X,\sigma)$ be a symplectic manifold of dimension $2n > 2$, $P \subset X$ a closed submanifold isomorphic to $\prP^n$. Let further $f\colon Z \to X$ be the blow up of $X$ in $P$ and let $D \subset Z$ denote the exceptional divisor. Then $D \to P$ is isomorphic to the incidence variety $\{(l,H) \subset \prP^n \times (\prP^n)^* \mid l \in H\}$. There exists a blow down $g\colon Z \to X'$ such that $D$ is the exceptional divisor and the restriction of $g$ to $D$ is the projection $D \to (\prP^n)^*$. The variety $\elm_P(X) := X'$ is also symplectic. It is called \emph{Mukai's elementary transform} or \emph{Mukai flop} of $X$. If $X$ is projective, $X'$ need not necessarily be projective.
\end{mind}

Locally we are exactly in the situation of Fu's and Namikawa's example $5$ in \cite{funa:2004}, so effecting a  Mukai flop at $E_1$ or $E_2$ gives a further symplectic resolution of $Z$. 
Writing $X := \elm_{E_2}(\widetilde{Z})$ we obtain the following configuration, where the first diagram visualises the blow ups and the second one describes the restrictions to the zero fibre of $\pi$: \vspace{-1ex}
$$ \begin{xy} \xymatrix{
 & \Bl_{E_2}\widetilde{Z} \ar@{->}_f[ld] \ar@{->}^g[rd] & \\
\widetilde{Z} \ar@{->}_{\pi}[d] & & X\\
Z &  &
}\end{xy}
\qquad
\begin{xy} \xymatrix{
 & \widetilde{E_1}\cup F \ar@{->}_f[ld] \ar@{->}^g[rd] & \\
E_1 \cup E_2 \ar@{->}_{\pi}[d] & & \widetilde{E_1}\cup E_2^*\\
0 &  &
}\end{xy} $$
Here $F$ is the incidence variety $\{(l,H) \in E_2 \times E_2^*\mid l \in H\}$. As $E_1$ and $E_2$ intersect in one point, $E_1$ does not remain untouched under the blow up. We denote the strict transform of $E_1$  by $\widetilde{E_1}$. The point of intersection $E_1 \cap E_2$ becomes a line of intersection $\widetilde{E_1} \cap F$, likewise $\widetilde{E_1}$ and $E_2^*$ intersect in a line.
\vspace{1em}

Our idea for constructing $X \to Z$ explicitly and for showing that $X$ is even algebraic is to consider a partial resolution of $Z$. Instead of resolving $Z_{sing}$ we blow up $\kxC^8$ in $\mathcal{V}(z_4,z_7,z_8) \cong \kxC^5$ and take the strict transform. This corresponds to the blow up of $Z$ in $S = \mathcal{V}(z_4,z_7,z_8, z_2^2 - 4z_1z_3)$, which we denote by $Y$:
\begin{align*}
Y = \{ ((z_1,\hdots,z_8),\,[y_4:y_7:y_8]) &\in \kxC^8 \times \mathbbm{P}^2 \mid \\
&z_iy_j = z_jy_i, \; i, j = 4,7,8, \; k_1 = \hdots = k_9 = 0\}
\end{align*}
where the $k_i$ are defined by 
$\rk M_2 \leq 1$, $M_2 = \left( \begin{smallmatrix} 2z_2-z_4 & 4z_3 & y_8 \\ 4z_1 & 2z_2+z_4 & -y_7 \\ z_5 & -z_6 & \frac{1}{4}z_4y_4\end{smallmatrix} \right)$.
\vspace{0.5ex}

Analysing the fibres of $\pi_2\colon Y \to Z$ we show that $Y$ resolves the $A_1$--singularities. Thus only an isolated singularity in the origin remains.
\begin{prop} The fibre $\pi_2^{-1}(z)$ is
\begin{itemize}
\item one point if $z \in Z \setminus \kxC^3$,
\item isomorphic to $\prP^1$ if $z \in \kxC^3 \setminus \{0\}$,
\item $E_1$ if $z = 0$.
\end{itemize}
\end{prop}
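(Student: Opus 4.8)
The plan is to imitate the proof of Proposition~\ref{fasern1}: fix a point $z = (z_1,\dots,z_8) \in Z$, pass to the three standard charts $y_4 = 1$, $y_7 = 1$, $y_8 = 1$ of $\prP^2$, and in each chart determine the common zero locus of the incidence relations $z_iy_j = z_jy_i$ $(i,j\in\{4,7,8\})$ together with the equations $k_1 = \dots = k_9 = 0$ coming from $\rk M_2 \leq 1$. The governing case distinction is whether or not $(z_4,z_7,z_8)$ vanishes.

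Suppose first that $(z_4,z_7,z_8)\neq 0$; note that then $z\notin\kxC^3 = \mathcal{V}(z_4,\dots,z_8)$. Here the incidence relations alone force $[y_4:y_7:y_8] = [z_4:z_7:z_8]$, so $\pi_2^{-1}(z)$ contains at most one point, and it only remains to check that this point lies on $Y$. Normalising the nonzero coordinate $z_{j_0}$ to $1$, the incidence relations give $y_i = z_i/z_{j_0}$ for $i\in\{4,7,8\}$; the first two columns of $M_2$ are literally the first two columns of $M$, and the last column of $M_2$ turns out to be the last column of $M$ divided by $z_{j_0}$ (the $(3,3)$-entry $\tfrac14 z_4y_4$ becoming $\tfrac14 z_4^2/z_{j_0}$). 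Hence $\rk M_2\leq 1 \Leftrightarrow \rk M\leq 1$, and the latter holds because $z\in Z$. This gives the first bullet whenever $(z_4,z_7,z_8)\neq 0$.

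The remaining case is $z_4 = z_7 = z_8 = 0$, where the incidence relations drop out and $\pi_2^{-1}(z)$ is carved out of the whole plane $\{[y_4:y_7:y_8]\}$ by $\rk M_2\leq 1$ alone; now $M_2$ has last column $(y_8,-y_7,0)^t$, third row $(z_5,-z_6,0)$, and first two columns equal to those of $M$, which form a rank $\leq 1$ block since $z\in Z$. If $(z_5,z_6)\neq 0$, the third row is nonzero, and proportionality of the rows of $M_2$ forces $y_7 = y_8 = 0$, i.e.\ $\pi_2^{-1}(z) = \{[1:0:0]\}$; as $(z_5,z_6)\neq 0$ again means $z\notin\kxC^3$, this matches the first bullet. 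If $z_5 = z_6 = 0$, then $h_5$ yields $z_2^2 = 4z_1z_3$, so $z\in Z_{sing}$, and the surviving equations reduce to the two linear conditions $z_2y_7 + 2z_1y_8 = 0$ and $2z_3y_7 + z_2y_8 = 0$ on $(y_7,y_8)$ with $y_4$ unconstrained; since $z_2^2 = 4z_1z_3$ these two linear forms are proportional. If moreover $(z_1,z_2,z_3)\neq 0$ they cut out a single line in $\prP^2$, so $\pi_2^{-1}(z)\cong\prP^1$, which is the second bullet; and if $z = 0$ all nine equations vanish identically, so $\pi_2^{-1}(0)$ is the entire plane $\{[y_4:y_7:y_8]\}\cong\prP^2$, which under the evident identification $y_i\leftrightarrow x_i$ of the exceptional $\prP^2$ of $Y\to Z$ is precisely the component $E_1$.

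None of this goes beyond bookkeeping; the one point requiring a little care is keeping the nested case analysis aligned with the stratification $Z\supset Z_{sing}\supset\{0\}$, so that the $A_1$-locus $Z_{sing}\setminus\{0\}$ really yields $\prP^1$-fibres while $Z\setminus Z_{sing}$ yields points and the origin yields $E_1$. I do not expect any genuine difficulty here: this proposition is simply the analogue, for the partial resolution $Y$, of the fibre computation already carried out for $\widetilde Z$ in Proposition~\ref{fasern1}.
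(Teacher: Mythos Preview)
Your proof is correct and follows essentially the same case analysis as the paper's own argument. The one noteworthy difference is in the $\prP^1$ case: the paper explicitly parametrises the fibre separately for $z_2 \neq 0$ and $z_2 = 0$, whereas you argue uniformly by observing that the two surviving linear conditions on $(y_7,y_8)$ have coefficient matrix $\left(\begin{smallmatrix} z_2 & 2z_1 \\ 2z_3 & z_2 \end{smallmatrix}\right)$ of determinant $z_2^2 - 4z_1z_3 = 0$ and rank exactly $1$ when $(z_1,z_2,z_3)\neq 0$, hence cut out a line in $\prP^2$. This is a mild streamlining but not a substantively different approach.
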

\begin{proof}
At first let $z \in Z \setminus \kxC^5$. Then $z_4 \neq 0,\;z_7 \neq 0$ or $z_8 \neq 0$, so the conditions $z_iy_j = z_jy_i$ determine $y = [y_4:y_7:y_8]$ uniquely. Furthermore $(z,y)$ satisfies $k_1,\dots,k_9$ so that the preimage is not empty.\par
Let now $z \in \kxC^5 \setminus \kxC^3$. Then $z_4 = z_7 = z_8 = 0$, but $z_5 \neq 0$ or $z_6 \neq 0$. Thus $k_6$, $k_7$, $k_8$ and $k_9$ simplify to $z_6y_7 = 0$, $z_5y_8 = 0$, $z_6y_8 = 0$, $z_5y_7 = 0$, so we have $y_7 = 0 = y_8$. This makes $k_1, k_2$ being satisfied automatically, whereas $k_3, k_4, k_5$ hold because $z \in Z$. So the preimage $\pi_2^{-1}(z) = \{(z,[1:0:0])\}$ is exactly one point.\par
In the case $z \in \kxC^3 \setminus \{0\}$ we have $z_4 = \hdots = z_8 = 0$, therefore the equations are $z_2y_7 + 2z_1y_8 = 0$, $z_2y_7 + 2z_3y_7 = 0$ and $z_2^2 - 4z_1z_3 = 0$. 
If $z_2 \neq 0$ we have $y_7 = \frac{-2z_1}{z_2}y_8$ or equivalently $y_8 = \frac{-2z_3}{z_2}y_7$. Thus each $y$ in the preimage is of the type $y = [z_2b:-2z_1a:z_2a] = [z_2b:z_2a:-2z_3a]$. 
If $z_2 = 0$ it follows $z_1 = 0$, $z_3 \neq 0$ or conversely. In the first case we conclude $y_7 = 0$, $y_8$ arbitrary, in the second case $y_8 = 0$, $y_7$ arbitrary, both cases being of the type computed above. As $(a, b) \in \kxC\setminus\{(0,0)\}$ can be chosen arbitrarily, the preimage is $\prP^1$.\par
Finally, the case $z = 0$ does not put any condition on the homogeneous coordinate $y$, so $\pi_2^{-1}(0) = \{(0,[y_4:y_7:y_8])\}$ is isomorphic to the projective plane $E_1$.
\end{proof}

Now we want to resolve the remaining singularity. As the remaining singularity is of codimension $4$, a further blow up would attach a $\prP^3$ and the dimension of the zero fibre would be $3 > 2 = \frac{1}{2}\codim \{0\}$, such that the map would not be semismall and neither symplectic.\par
Locally in the chart $y_4 = 1$ the variables $z_7 = z_4y_7$ and $z_8 = z_4y_8$ can be omitted and $z_7y_8 = z_4y_7y_8 = y_7z_8$ is automatic. So $M_2$ yields the local description
$$\left \{(z_1,z_2,z_3,z_4,z_5,z_6,y_7,y_8)\, \middle|\; \rk \left(\begin{smallmatrix} -2z_2+z_4&4z_3&-8y_8\\ -4z_1&2z_2+z_4&8y_7\\-z_5&-z_6&-2z_4 \end{smallmatrix}\right)\leq 1\right\}.$$
A linear coordinate transformation is enough to transform this into the variety
$\{A \in \liesl_3\mid \rk A \leq 1\}$
which has two well-known symplectic resolutions
$$\begin{array}{l}
\sigma_1\colon \widetilde{Y}':= \{(A,L) \in Y \times \prP^2 \mid \im A \subset L\} \to Y,\\
\sigma_2\colon \widetilde{Y} := \{(A,H) \in Y \times (\prP^2)^* \mid H \subset \ker A\} \to Y.
\end{array}$$
The compositions $\pi_2 \circ \sigma_1$ and $\tau_2 = \pi_2 \circ \sigma_2$ are symplectic resolutions of $Z$.
\vspace{1em}

Now we can examine our original question, namely if $X$ coincides with one of these resolutions and $\widetilde Z$ with the other one. To obtain a positive anwer we need the existence of a map $\widetilde Z \to Y$:\hspace{-1em}
$$ \begin{xy} \xymatrix{
\widetilde{Z} \ar@{->}^{\pi}[dr] \ar@{.>}[r] & Bl_S(Z) = Y \ar@{->}^{\pi_2}[d]<-2mm> \\
& \quad Z \supset S
}\end{xy} $$
By a SINGULAR calculation $\codim (\pi^{-1}\II_S)\mathcal{O}_{\widetilde Z} = 1$, so $S$ defines a Weil divisor and even a Cartier divisor in $\widetilde Z$, because in our case Weil and Cartier divisors coincide. Thus by the universal property of blowing up there exists a unique map $f: \widetilde Z \to Y$ with $\pi_2 \circ f = \pi$, so $\widetilde Z$ must coincide with the first resolution $\widetilde{Y}'$ by the fact that $Y$ admits only two symplectic resolutions. 
The other resolution of $Y$ is a Mukai flop of the first one, so we obtain $X = \widetilde{Y}$. 
Blowing up $Z$ in $\mathcal{V}(z_4, z_5, z_6)$ yields a variety $Y_1$ isomorphic to $Y_2 := Y$ with a morphism $\pi_1 \colon Y_1 \to Z$, and two symplectic resolutions $\widetilde{Y}_1' \cong \widetilde Z$, $\tau_1 \colon \widetilde{Y}_1 \mapsto Z$, where $\widetilde{Y}_1$ is the Mukai flop obtained by flopping $E_1$. Altogether we have found three projective symplectic resolutions $\widetilde{Y}_1$, $\widetilde{Y}_2$ and $\widetilde Z$:
By construction, $\widetilde{Y}_1$ and $\widetilde{Y}_2$ are isomorphic as varieties but non-isomorphic as resolutions of $Z$. However, they are equivalent in the sense of \cite{funa:2004}: Let $\varphi$ be the automorphism of $Z$ which sends $z_1, z_2, z_3, z_4$ to themselves and which interchanges $z_5$ with $z_7$ and $z_6$ with $z_8$, respectively. Then $\pi_2^{-1} \circ \varphi \circ \pi_1$ is an isomorphism of $Y_1$ and $Y_2$. This induces an
isomorphism $\tau_2^{-1} \circ \varphi \circ \tau_1$ of the resolutions $\widetilde{Y_1}$ and $\widetilde{Y_2}$. 
On the contrary, due to loc. cit. $\widetilde Z$ is not equivalent to them.


\begin{thebibliography}{KLMS06}

\bibitem[Bea83]{bea:1983}
A.~Beauville.
\newblock Variétés kählériennes dont la première classe de Chern est nulle.
\newblock {\em J. Diff. Geom.} 18: 755--782, 1983.

\bibitem[Bea00]{bea:2000}
A.~Beauville.
\newblock Symplectic Singularities.
\newblock {\em Invent. Math.} 139: 541--549, 2000.

\bibitem[CMG93]{cmg:1993}
D.~Collingwood and W.~McGovern.
\newblock Nilpotent Orbits in Semisimple Lie Algebras.
\newblock {\em Van Nostrand Reinhold}, New York, 1993.

\bibitem[Fog68]{fog:1968}
J.~Fogarty.
\newblock Algebraic Families on an Algebraic Surface.
\newblock {\em Am. J. Math.} 90: 511--521, 1968.

\bibitem[Fu03]{fu:2003}
B.~Fu.
\newblock Symplectic Resolutions for Nilpotent Orbits.
\newblock {\em Invent. Math.} 151, no.1: 167--186, 2003.

\bibitem[FN04]{funa:2004}
B.~Fu and Y.~Namikawa.
\newblock Uniqueness of Crepant Resolutions and Symplectic Singularities.
\newblock {\em Ann. Inst. Fourier, Grenoble,} 54, no.1: 1--19, 2004.

\bibitem[GPS05]{singular}
G.-M.~Greuel, G.~Pfister and H.~Schönemann.
\newblock {\sc Singular} 3.0.2. A Computer Algebra System for Polynomial Computations.
\newblock Centre for Computer Algebra, Universtity of Kaiserslautern (2005). {\tt http://www.singular.uni-kl.de}

\bibitem[GK04]{gika:2004}
V. Ginzburg and D. Kaledin.
\newblock Poisson Deformations of Symplectic Quotient Singularities.
\newblock {\em Adv. Math.}, 186: 1--57, 2004.

\bibitem[Hil90]{hil:1890}
D.~Hilbert.
\newblock Zur Theorie der algebraischen Formen.
\newblock {\em Math. Annalen}, 36: 473--543, 1890.

\bibitem[Hil93]{hil:1893}
D.~Hilbert.
\newblock Über die vollen Invariantensysteme.
\newblock {\em Math. Annalen}, 42: 313--373, 1893.

\bibitem[Kal03]{kal:2003}
D.~Kaledin.
\newblock On Crepant Resolutions of Symplectic Quotient Singularities.
\newblock {\em Selecta Math.} Vol.9, no.4: 529--555 2003.

\bibitem[Kal06]{kal:2006}
D.~Kaledin.
\newblock Symplectic Singularities from the Poisson Point of View.
\newblock {\em J. Reine Angew. Math.} 600: 135--156, 2006.

\bibitem[KLS06]{kls:2006}
D.~Kaledin, M.~Lehn and C.~Sorger.
\newblock Singular Symplectic Moduli Spaces.
\newblock {\em Invent. Math.} 164: 591--614, 2006.

\bibitem[KP82]{kp:1982}
H.~Kraft and C.~Procesi.
\newblock On the Geometry of Conjugacy Classes in Classical Groups.
\newblock {\em Comm. Math. Helv.} 57: 539--602, Birkhäuser, Basel, 1982.

\bibitem[CLe07]{cle:2007}
C.~Lehn.
\newblock Polare Darstellungen und symplektische Reduktion.
\newblock {\em Diploma thesis at the Johannes Gutenberg--University}, Mainz, 2007.
\newblock {\em Unpublished}.

\bibitem[LS06]{ls:2006}
M.~Lehn and C.~Sorger.
\newblock La singularité de O'Grady.
\newblock {\em J. Alg. Geom.} 15: 753--770, 2006.

\bibitem[Lun73]{lun:1973}
D.~Luna.
\newblock Slices étales.
\newblock {\em Bull. Soc. Math. France, Mémoire,} 33: 81--105, 1973.

\bibitem[Muk03]{muk:2003}
S.~Mukai.
\newblock An Introduction to Invariants and Moduli.
\newblock {\em Cambridge Studies in Advanced Mathematics} 81, Cambridge Univ. Press 2003.

\bibitem[Nam01]{nam:2001}
Y.~Namikawa.
\newblock Deformation Theory of Singular Symplectic n-folds.
\newblock {\em Math. Ann.} 319: 597--623, 2001.

\bibitem[Nam06]{nam:2006}
Y.~Namikawa.
\newblock On Deformations of $\mathbbm{Q}$-factorial Symplectic Varieties.
\newblock {\em J. Reine Angew. Math.} 599: 97--110, 2006.

\bibitem[PV94]{pv:1994}
V.L.~Popov and E.B.~Vinberg.
\newblock Invariant Theory.  {\em In} Algebraic Geometry IV.
\newblock {\em Encycl. Math. Sc.} 55: 123 - 284, Springer 1994.

\bibitem[Sch78]{schw:1978}
G.~W.~Schwarz.
\newblock Representations of Simple Lie Groups with Regular Rings of Invariants.
\newblock {\em Invent. Math.} 49: 167--191: 1978.

\bibitem[Ver00]{ver:2000}
M.~Verbitsky.
\newblock Holomorphic Symplectic Geometry and Orbifold Singularities.
\newblock {\em Asian J. Math.} Vol.4, No.3: 553--564: 2000.

\bibitem[Wey46]{wey:1946}
H.~Weyl.
\newblock The Classical Groups - Their Invariants and Representations.
\newblock {\em Princeton Mathematical Series} 1.
\newblock Princeton University Press, New Jersey, 1946.

\end{thebibliography}
\end{document}